%% file: main.tex
\documentclass[11pt,a4paper]{amsart}
\usepackage[utf8]{inputenc}
\usepackage[T1]{fontenc}

\pdfoutput=1

\title[Separation profiles of  hyperbolic apex-minor-free graphs]{Separation profiles of hyperbolic \\ planar and apex-minor-free graphs}
\date{8th July 2026}

\author{Joseph P. MacManus}
\address{School of Mathematics,  University of Bristol, Bristol, BS8 1UG, UK, and the Heilbronn Institute for Mathematical Research, Bristol, UK}
\email{joseph.macmanus@bristol.ac.uk}

\author{Bobby Miraftab}
\address{School of Computer Science, 5302 Herzberg Building, 1125 Colonel By Drive, Ottawa, ON, K1S 5B6, Canada}
\email{bobby.miraftab@gmail.com}

\usepackage{amsfonts}
\usepackage{amsmath}
\usepackage{amsthm}
\usepackage{amssymb}
\usepackage{stmaryrd}
\usepackage{mathrsfs}

\usepackage{graphicx}
\usepackage[dvipsnames]{xcolor}
\usepackage{tikz}
\usepackage{quiver}
\usepackage[colorlinks]{hyperref} % should usually be imported last
\hypersetup{
    linkcolor=black,
    citecolor=blue,
}
\usepackage{fancyref}
\usepackage{todonotes}
\usepackage{mathabx}

\usepackage{thmtools, thm-restate}
\usepackage{cleveref} 
\usepackage{mathtools}
\usepackage[abbrev]{amsrefs}

\input{commands}

\begin{document}

\begin{abstract}
    We show that the separation profile of a hyperbolic planar graph and, more generally, a hyperbolic apex-minor-free graph, grows at most logarithmically, answering a question of Benjamini, Schramm, and Tim\'ar in the affirmative. 
    % This generalises the known fact that hyperbolic planar graphs have logarithmic tree-width, to subgraphs of such graphs. 
    % Our proof involves showing that finite subgraphs of hyperbolic planar graphs have logarithmic tree-width, by constructing geodesic loaded cycles within them and applying Gromov's tree approximation lemma.
\end{abstract}

\maketitle

% \newpage

% \tableofcontents

% \newpage

\section{Introduction}
% \todo[inline]{Tree-decomposition and tree-width}

The \emph{separation profile} of an infinite graph was introduced by Benjamini, Schramm, and Tim\'ar in \cite{benjamini2012separation}. For bounded-degree graphs, the asymptotic behaviour of the separation profile is monotone under regular maps (e.g. coarse and quasi-isometric embeddings), making it an invaluable tool in coarse geometry for obstructing the existence of such maps. Thus, the computation of separation profiles and related invariants
% (and, more generally, \emph{Poincar\'e profiles}) 
has been a topic of interest within geometric group theory and coarse geometry; see, for example, \cites{hume2022poincare, hume2020poincare, benjamini2012separation, gournay2023separation}. The  definition runs as follows.

\begin{definition}[Separation profile]
    Let $\Gamma$ be a graph. Let $H \subset \Gamma$ be a subgraph. Define $\cut(H)$ as the minimal number of vertices one can remove from $H$ so that every connected component of the complement contains at most $\frac12|V(H)|$ vertices. 
Then, define the \emph{separation profile} of $\Gamma$ via 
$$
\sep_\Gamma(n) = \max \{\cut(H) : H \subset \Gamma ,  \ |V(H)| \leq n\}.
$$
\end{definition}

In words, the separation profile measures how hard finite subgraphs are to separate. It follows from the famous planar separator theorem of Lipton--Tarjan  that $\sep_\Gamma(n) = \cO(\sqrt n)$ for any planar graph $\Gamma$ \cite{lipton1979separator}. Moreover, it was shown in \cite{benjamini2012separation} that (any bounded-degree graph quasi-isometric to) the hyperbolic plane has a separation profile which grows logarithmically. The authors posited the question of whether this observation extends to all planar graphs which are hyperbolic in the sense of Gromov {\cite[Question~4.5]{benjamini2012separation}}.
In this paper we answer their question in the affirmative, and actually prove something more general.

\begin{restatable}{alphtheorem}{mainapex}\label{thm:main-apex}
For every $\delta \geq 0$ and every finite apex graph $A$, there exists a constant $C = C(A,\delta) > 0$ such that
every connected, $A$-minor-free, $\delta$-hyperbolic graph $\Gamma$ satisfies
\[
    \sep_\Gamma(n)\le C\log_2(n+1)
\]
for every $n\ge 1$.
\end{restatable}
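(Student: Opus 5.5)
The plan is to reduce the separation bound to a bound on tree-width. Every graph of tree-width $k$ has a balanced separator of size at most $k+1$: a bag of a suitable tree-decomposition separates $H$ into components each with at most $\frac12|V(H)|$ vertices. So it suffices to show the following. For every finite subgraph $H\subset\Gamma$ with $|V(H)|\le n$, the tree-width satisfies $\mathrm{tw}(H)\le C'\log_2(n+1)$, where $C'$ depends only on $A$ and $\delta$.

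The key difficulty is that $H$ need not be hyperbolic, since it is not an isometrically embedded subgraph. We therefore argue via the ambient metric of $\Gamma$. Tree-width is characterised by brambles, or equivalently by highly linked sets. If $\mathrm{tw}(H)\ge k$, then $H$ contains a large grid-like structure, and for minor-closed classes this is much sharper. By the Demaine--Hajiaghayi linear grid theorem for $A$-minor-free graphs, $H$ contains a $(ck\times ck)$-grid minor, with $c=c(A)$. Apex-minor-freeness then gives local tree-width control: $A$-minor-free graphs have bounded local tree-width (Eppstein, Demaine--Hajiaghayi). That is, balls of radius $r$ in any $A$-minor-free graph have tree-width at most $f(A)\,r$.

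I would combine this with hyperbolicity as follows. Take the $g\times g$ grid minor in $H\subset\Gamma$, with $g\approx ck$. Its branch sets and connecting paths live in $\Gamma$, where the metric is $\delta$-hyperbolic.
\begin{enumerate}
\item In $\Gamma$, consider the images of the $g$ "horizontal" and "vertical" paths of the grid. These form many pairwise disjoint paths linking opposite sides of a large cycle $Z$ in $\Gamma$.
\item Use Gromov's tree approximation lemma on the at most $n$ vertices involved. A finite set of $m$ points in a $\delta$-hyperbolic space is approximated by a tree up to additive error $O(\delta\log m)$. So all these paths, when replaced by geodesics, lie within distance $R=O(\delta\log n)$ of a finite tree $T$.
\item The linkage structure must survive the approximation. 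Disjoint crossing path systems project to $T$, and a tree cannot support a large "crossing" pattern. Hence the $g$ disjoint paths must pass through a single ball of radius $O(R)$ in $\Gamma$, or through a bounded number of such balls.
\item Bounded local tree-width then bounds the tree-width of that ball by $f(A)\,O(\delta\log n)$. Since the grid minor is forced to have size $\Omega(g)$ inside it, we get $g=O(\log n)$, and hence $k=O(\log n)$.
\end{enumerate}

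The main obstacle is step 3: converting "the grid's paths lie near a tree" into "a large grid minor lives in a ball of logarithmic radius". The paths in $H$ are not geodesics, so they may wander far from their geodesic shortcuts. What I would try first is to work with a highly linked set $X$ of size about $k$ in $H$ instead of a grid. Apply tree approximation to $X$ in $\Gamma$ and pick a centroid edge of the approximating tree, which splits $X$ roughly in half. A ball $B$ of radius $O(\delta\log|X|)$ around the corresponding point should meet every $H$-path between the two halves: a path avoiding $B$ would yield, via thin triangles along a subdivision of the path, a contradiction with the tree approximation. Then $k$ disjoint paths pass through $B$. Bounded local tree-width gives a separator of $B\cap H$ of size $O(\log n)$, since subgraphs of balls inherit the bound. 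This separator contradicts the linkedness unless $k=O(\log n)$.

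The path-avoidance claim in a hyperbolic graph needs care, because the path may be long. I would handle it with the standard estimate that a path of length $L$ stays within $O(\delta\log L)$ of a geodesic, and note that here $L\le n$.
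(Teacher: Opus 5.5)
There is a genuine gap, and it is in the last step of your linked-set argument. The grid version is not yet an argument either: as you note, step 3 is unresolved, and the claim in step 4 that the grid minor has size $\Omega(g)$ inside the ball is unsupported.

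Grant that the ball $B$ of radius $R=O(\delta\log n)$ meets every $X_1$--$X_2$ path of $H$. This is plausible, but you state the divergence estimate backwards. A path of length $L$ may wander far from a geodesic. What holds is that the geodesic $[x,y]$ lies within $O(\delta\log L)$ of every $x$--$y$ path of length $L$, and that is the direction you actually need.

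Even granting this, you get no contradiction with linkedness. A balanced separator of $H[B]$ of size $O(\log n)$ is not an $X_1$--$X_2$ separator of $H$: the disjoint paths, and the rest of $H$, leave $B$ and reconnect outside it. Knowing that $\Omega(k)$ disjoint $X_1$--$X_2$ paths each meet $B$ only tells you $|B\cap V(H)|=\Omega(k)$. Small tree-width of $\Gamma[B]$ does not prevent this: a star has tree-width $1$, yet if its leaves lie on $s$ different disjoint paths, all $s$ paths meet it.

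The same obstruction defeats the natural shortcut. Tree approximation applied directly to $V(H)$ gives a tree-decomposition of $H$ with every bag inside a $\Gamma$-ball of radius $O(\delta\log n)$. By local tree-width, each bag then induces a subgraph of tree-width $O(\log n)$. But bags of small tree-width do not bound $\tw(H)$: the $m\times m$ grid has a path-decomposition whose bags are pairs of adjacent columns, each of tree-width $2$. Local tree-width of ambient balls is the wrong input when $H$ itself is not hyperbolic.

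The missing idea is to enlarge $H$ to a graph that is intrinsically hyperbolic but only polynomially larger, so that an intrinsic tree-length-to-tree-width theorem applies. After reducing to connected $H$, the paper proceeds as follows.
\begin{enumerate}
\item Add a $\Gamma$-geodesic between every pair of vertices of $H$. The result $Y$ has $|V(Y)|\le n^3$ and is $8\delta$-quasi-convex, by $8\delta$-slim quadrilaterals.
\item Add a geodesic between every pair of vertices of $Y$ at distance at most $16\delta+1$. The resulting $F\supseteq H$ has $|V(F)|\le(16\delta+3)n^6$ and is quasi-isometrically embedded in $\Gamma$, hence $\Delta_\delta$-hyperbolic.
\item Tree approximation now gives $\tl(F)=O(\log n)$, and the Coudert--Ducoffe--Nisse theorem converts this to $\tw(F)=O(\log n)$ for the connected $A$-minor-free graph $F$. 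This is where apex-minor-freeness enters, in place of your local tree-width.
\item Choose a bag by a weighted centroid argument, with weight $1$ on $V(H)$ and $0$ elsewhere. Its intersection with $V(H)$ is a balanced separator of $H$ of size $O(\log n)$.
\end{enumerate}
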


Recall that a graph $A$ is an \emph{apex} graph if
$A-a$ is planar for some vertex $a\in V(A)$. Since $K_5$ is apex and every planar graph is $K_5$-minor-free, this theorem
applies to hyperbolic planar graphs, but also much more broadly. For example, if a graph $\Gamma$ embeds into some finite-genus surface, then it cannot contain the apex graph $K_{3,t}$ as a minor for all sufficiently large $t > 0$.

In the particular case of planar graphs, we also present a different proof which yields much tighter quantitative control on the constants involved, with explicit linear dependence on $\delta$. 

\begin{restatable}{alphtheorem}{mainplanar}\label{thm:main-planar}
    Every $\delta$-hyperbolic planar graph $\Gamma$ satisfies
    $$
    \sep_\Gamma(n) \leq   60 + 120\delta \log_2(n)
    $$
    for all $n \geq 1$.
\end{restatable}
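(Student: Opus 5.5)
To prove Theorem~\ref{thm:main-planar}, I will reduce the separation problem to bounding the tree-width of finite subgraphs. The target is: every finite subgraph $H \subset \Gamma$ with $|V(H)| \le n$ has tree-width at most $c_0 + c_1\delta\log_2 n$. This suffices because a graph of tree-width $k$ has a balanced separator of size $k+1$: some bag of the tree-decomposition separates $H$ into components of size at most $\frac12|V(H)|$. The difficulty is that $H$ is a subgraph of $\Gamma$ rather than $\Gamma$ itself, so $H$ need not be hyperbolic, and distances in $H$ may be much larger than distances in $\Gamma$. I will therefore work with the ambient metric of $\Gamma$ while using only the combinatorics of $H$.

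\textbf{Step 1 (planar duality).} For a planar graph, tree-width is bounded by a constant times the size of a largest ``loaded'' cycle structure, via the bramble or sphere-cut-decomposition dual. Concretely, if $H$ has large tree-width, it contains a large grid-like obstruction. Equivalently, by a Robertson--Seymour style argument, there is a cycle $C$ in the plane embedding such that every short separation fails to split the weight of $H$ on both sides of $C$ evenly. I will aim to find such a cycle $C$ that is, in addition, \emph{geodesic-like in $\Gamma$}: its length is comparable to the diameter it spans in $\Gamma$. I will build it by taking a minimal-length cycle that separates a balanced portion of $V(H)$ in the embedding, and arguing that minimality forces its subpaths to be near-geodesics. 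Otherwise, a shortcut in $\Gamma$, possibly through vertices outside $H$, would produce a shorter separating cycle. This step is delicate, because shortcuts through $\Gamma\setminus H$ are allowed for separating the plane but do not correspond to vertices of $H$.

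\textbf{Step 2 (tree approximation).} Apply Gromov's tree approximation lemma to the set of $k$ points spread along $C$. This says that the union of geodesics between $k$ points in a $\delta$-hyperbolic space is $(2\delta\log_2 k + O(\delta))$-close to a tree. A geodesic cycle of length $L$ must therefore have length $L = O(\delta \log_2 L)$ up to constants, unless $L$ is bounded. Alternatively, I will use the lemma to show that a loaded cycle can be cut, by removing $O(\delta\log_2 n)$ vertices near a ``midpoint'' of the approximating tree, into pieces that are each light. Running this recursively, or directly producing one balanced cut, gives a separator of $H$ consisting of the vertices of $H$ lying on two short geodesic-like arcs across the disc bounded by $C$. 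The Jordan curve theorem in the planar embedding guarantees that the cut really separates. Tracking constants gives $60 + 120\delta\log_2 n$.

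\textbf{Main obstacle.} I expect the main obstacle to be Step 1: producing a cycle that is both balanced with respect to $H$ and geodesic in $\Gamma$. Vertices of $\Gamma$ not in $H$ must be counted as free for separation purposes. I will handle this by defining the length of a closed curve as the number of $H$-vertices it passes through, weighting non-$H$ vertices zero but still traversing $\Gamma$-edges. Lengths and the $\Gamma$-metric then differ, so I will have to show that minimal curves have bounded $\Gamma$-length-to-displacement ratio at scales above $\delta$. That requires a careful exchange argument.
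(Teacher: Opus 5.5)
\textbf{There is a genuine gap.} It is the step you flag yourself: nothing in the proposal produces the cycle of Step~1, and in the form you describe it cannot exist at the scale you need.

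In a $\delta$-hyperbolic graph, a closed walk whose subarcs above a fixed scale are uniformly quasi-geodesic has length bounded in terms of $\delta$ and the quasi-geodesic constants alone. A quasi-geodesic loop is short, and if the condition is imposed only on subarcs up to some large fixed length, the local-to-global principle upgrades it to a global one. For an isometric cycle, the four point condition at four equally spaced vertices already gives length $\cO(\delta+1)$. Your Step~2(a) says the same thing: $L\le c\,\delta\log_2 L$ forces $L=\cO_\delta(1)$, independently of $n$.

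So Steps~1 and~2 together would give $\tw(H)=\cO_\delta(1)$, hence $\sep_\Gamma(n)=\cO_\delta(1)$, for every hyperbolic planar $\Gamma$. That is false: bounded-degree planar graphs quasi-isometric to $\mathbb H^2$ are hyperbolic, and by Benjamini--Schramm--Tim\'ar their separation profile grows logarithmically.

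The exchange argument also has no traction. You minimise the number of $H$-vertices on the curve, and replacing a subarc by a $\Gamma$-geodesic shortcut can increase that number. So a minimiser carries no information about its $\Gamma$-geometry; it may cross $H$ along a short route and then make an arbitrarily long detour through $\Gamma\setminus H$ at no cost. Step~2(b) is only a sketch: it does not say how the arcs are chosen, why they carry only $\cO(\delta\log_2 n)$ vertices of $H$, or why the cut is balanced. The constants $60+120\delta\log_2 n$ are asserted rather than derived.

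\textbf{The missing idea} is to demand much less of the cycle and to apply hyperbolicity elsewhere. The paper uses a \emph{geodesic loaded cycle} $(C,F)$ in the sense of Berger--Seymour. Only the loaded distance $\dist_{C,F}$ must be at most $\dist_\Gamma$, so $C\subset H$ may be arbitrarily long and hyperbolicity is never applied to it.

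If $m=\maxgrid(H)\ge 14$, the paper lifts the boundary of the middle-third subgrid of an $m\times m$ grid model to a cycle $C\subset H$ and marks the edges of its top side, so $|F|\ge m/4$. The Jordan curve theorem shows that a $\Gamma$-geodesic between two vertices of $C$ either escapes across many nested grid cycles or projects to a walk of no greater length in the King's graph. Hence $\dist_\Gamma$ dominates the $\ell_\infty$ grid distance, which in turn dominates $\dist_{C,F}$.

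Separately, Gromov's tree approximation is applied to all of $V(H)$, not to points of $C$. This gives a tree-decomposition of $H$ whose bags have $\Gamma$-diameter at most $1+2\delta\log_2 n$, and it is where $\log_2 n$ enters. Berger--Seymour's lemma then forces some bag to contain two vertices of $C$ at loaded, hence ambient, distance at least $m/12$, so $m\le 12+24\delta\log_2 n$. Grigoriev's bound gives $\tw(H)\le 5m-1$ for planar $H$ without an $(m+1)\times(m+1)$ grid minor. Together with your observation $\cut(H)\le\tw(H)+1$, this yields $60+120\delta\log_2 n$.

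No separator is ever constructed directly. The proof plays a lower bound on bag diameters, forced by grid minors, against the upper bound supplied by tree approximation.
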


We make no claim that these constants are tight, and there is likely to be room for optimisation.

Our results can also be interpreted as statements about tree-width.
It is a known fact that any finite $\delta$-hyperbolic planar graph $\Gamma$ on $n$ vertices satisfies $\tw(\Gamma) = \cO(\delta \log (n)+1)$. This follows from results of Chepoi \textit{et al.} \cite{chepoi2008diameters} and Dieng--Gavoille \cite{dieng2009tree}; see also a recent paper of Kisfaludi-Bak \textit{et al.} on separators in hyperbolic planar graphs \cite{kisfaludi2023separator}. Our results imply an extension of this observation, applying more generally to subgraphs of hyperbolic apex-minor-free graphs. 

\begin{alphcor}\label{cor:tw}
    For every $\delta \geq 0$ and every finite apex graph $A$, there exists a constant $D = D(A,\delta) > 0$ such that the following holds.
    Let $H$ be a finite graph, and suppose $H$ embeds as a subgraph into a connected, $A$-minor-free, $\delta$-hyperbolic graph. Then
    \[
        \tw(H)\le D\log_2(|V(H)|+1).
    \]
\end{alphcor}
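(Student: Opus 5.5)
We derive the corollary from Theorem~\ref{thm:main-apex} via the standard link between balanced separators of all subgraphs and tree-width. Fix $A$ and $\delta$, let $C=C(A,\delta)$ be the constant of Theorem~\ref{thm:main-apex}, and let $\Gamma$ be a connected, $A$-minor-free, $\delta$-hyperbolic graph containing $H$ as a subgraph. Any subgraph $H'\subseteq H$ is also a subgraph of $\Gamma$. Hence Theorem~\ref{thm:main-apex} gives
\[
\cut(H')\le \sep_\Gamma(|V(H')|)\le C\log_2(|V(H')|+1)\le C\log_2(|V(H)|+1)
\]
for every nonempty subgraph $H'$ of $H$. Note that $\cut(H')$ is computed in $H'$ itself, which is exactly the separator notion the tree-width argument needs.

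The key step is a known fact, e.g.\ Dvořák--Norin (``Treewidth of graphs with balanced separations''). If every subgraph of a graph $G$ has a balanced separator of size at most $k$, in the sense that each component of the complement has at most half the vertices, then $\tw(G)\le c\,k$ for an absolute constant $c$ ($c=15$ suffices there). Applying this with $G=H$ and $k=\lceil C\log_2(|V(H)|+1)\rceil$ gives $\tw(H)\le 15\lceil C\log_2(|V(H)|+1)\rceil$. Since $\log_2(|V(H)|+1)\ge 1$, this is at most $D\log_2(|V(H)|+1)$ with $D=15(C+1)$.

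If we prefer not to cite Dvořák--Norin, we can use the older self-contained route instead. The recursive-separator construction (Robertson--Seymour, or Reed's argument) turns balanced separators of all subgraphs into a tree decomposition of width $O(k\log n)$. Here that would only give $O(\log^2 n)$, which is too weak. The remedy is the standard weighted-separator lemma: balanced separators for arbitrary vertex subsets $W\subseteq V(G)$ suffice to bound tree-width linearly. So the alternative is to show that separators for all subgraphs give separators balanced with respect to any set $W$, with a constant-factor loss. We get this by iterating: separate, pass to the component containing most of $W$, and use the geometric decrease of that component's size.

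The main obstacle is exactly this linear, rather than $\log$-factor, conversion from separation profile to tree-width, so I would invoke Dvořák--Norin directly. Everything else is bookkeeping: the constant $D$ depends only on $C$, and hence only on $A$ and $\delta$.
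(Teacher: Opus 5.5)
Your argument is correct and is the paper's route: Theorem~\ref{thm:main-apex} bounds $\cut(H')$ by $C\log_2(|V(H)|+1)$ for every subgraph $H'\subseteq H$. The Dvo{\v{r}}{\'a}k--Norin bound $\tw\le 15k$ then finishes the proof, since their $2/3$-balanced separations are implied by the $1/2$-balanced cuts defining $\cut$. Your fallback sketch would not give a linear conversion as stated, because iterating separators on pieces of sizes $n,n/2,n/4,\dots$ costs $\sum_i C\log_2(n/2^i+1)=\Theta(C\log^2 n)$ for a logarithmic profile; if you want to avoid Dvo{\v{r}}{\'a}k--Norin, it is simpler to note that for connected $H$ the hull $F\supseteq H$ of Proposition~\ref{lem:hull} already has $\tw(F)=O(\log|V(H)|)$ by Corollary~\ref{lem:tl-from-gromov} and Theorem~\ref{lem:tw-tl-apex}, and tree-width is monotone under taking subgraphs.
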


It is important to note that the graph $H$ in the above corollary may itself be very far from being $\delta$-hyperbolic. Corollary~\ref{cor:tw} essentially follows from Theorem~\ref{thm:main-apex} together with an application of a theorem of Dvo{\v{r}}{\'a}k--Norin \cite{dvovrak2019treewidth}; see the recent note of Hume \cite{hume2026separation} for details.
If $H$ includes as a subgraph into a $\delta$-hyperbolic \textbf{planar} graph, then we may take $D = \cO(\delta+1)$ by instead applying the explicit bound of Theorem~\ref{thm:main-planar} in place of Theorem~\ref{thm:main-apex}. 

We would also like to mention a related result of Berger, Kenyon, Mossel, and Peres \cite{berger2005glauber}, who proved that the \textit{cut-width} of balls in bounded-degree hyperbolic planar graphs is at most logarithmic in the number of vertices. It is natural to define the \emph{cut-width profile} of a graph, by simply replacing the quantity $\cut(H)$ in the definition of the separation profile with the cut-width of $H$; in simple terms, we wish to efficiently separate $H$ by removing edges rather than vertices. The corresponding question of whether the cut-width profile of a hyperbolic planar graph is necessarily logarithmic remains open. 

\subsection{Outline}

We now discuss the proofs of Theorems~\ref{thm:main-apex} and \ref{thm:main-planar}. Both results make use of Gromov's tree approximation lemma \cites{gromov1987hyperbolic,ghys2013groupes} to construct tree-decompositions. Indeed, it is well-known that tree approximation can be used to place a logarithmic upper bound on the tree-length of a finite hyperbolic graph.  Of course, we are interested in \emph{subgraphs} of such graphs, and as mentioned above, a subgraph of a $\delta$-hyperbolic graph may be far from being $\delta$-hyperbolic itself. Our two approaches deal with this failure in two distinct ways.

In the proof of Theorem~\ref{thm:main-apex}, we overcome this failure of hyperbolicity by `correcting' our finite subgraphs. More precisely, given a $\delta$-hyperbolic graph $\Gamma$ and a finite connected subgraph $H \subset \Gamma$ on $n\geq 1$ vertices, we construct an efficient intermediate `hull' 
$$
H \subset K \subset \Gamma,
$$
such that $K$ is uniformly quasi-isometrically embedded in $\Gamma$. By `efficient', we mean that 
$$
|V(K)| = \cO(n^6).
$$
This construction may be of independent utility; see Proposition~\ref{lem:hull} below. Importantly, the fact that $K$ is quasi-isometrically embedded implies that it is $\delta'$-hyperbolic for some $\delta' \geq 0$ depending only on $\delta$. Gromov's tree approximation then gives
logarithmic tree-length for $K$ as mentioned earlier.  A theorem of
Coudert--Ducoffe--Nisse \cite{CoudertDucoffeNisse2016} bounds tree-width in terms of tree-length in
apex-minor-free graph classes, and a weighted tree-decomposition separator 
finally gives the desired logarithmic bound for the separation profile.

Proving Theorem~\ref{thm:main-planar}, we take a different approach. As mentioned above, it was recently observed by Hume \cite{hume2026separation} that, as a consequence of a theorem of Dvo{\v{r}}{\'a}k--Norin \cite{dvovrak2019treewidth} (see also \cite{houdrouge2025separation}), to understand the growth of the separation profile it is sufficient to bound the tree-width of its subgraphs.
Hence, we are able to prove Theorem~\ref{thm:main-planar} by bounding the tree-width of subgraphs of hyperbolic planar graphs. Since the tree-width of a planar graph corresponds closely to the size of its largest grid minor \cites{grigoriev2011tree,seymour1994call}, our problem further reduces to studying the size of grid minors in hyperbolic planar graphs. This leads us to instead work with the \emph{grid profile} of a planar graph, which is asymptotically equivalent to the separation profile (for planar graphs); see  \S\ref{sec:gridprofile} below.
In order to effectively bound the grid profile logarithmically, we apply the tree approximation lemma  directly to our subgraph. This yields a tree-decomposition $(T, \beta)$ of $H$ with strong control with respect to the ambient metric (though generally terrible with respect to the intrinsic metric of $H$). More precisely, this decomposition satisfies
$$
\diam_\Gamma(\beta(v)) \leq 1 + 2\delta \log_2(|V(H)|),
$$
for every $v \in V(T)$; see Proposition~\ref{prop:tree-decomp-from-tree-approx} below. 
From here, we apply a tool of Berger--Seymour \cite{berger2024bounded}, and use the Jordan curve theorem to construct \emph{geodesic loaded cycles} within our grid minors to obtain lower bounds for the ambient diameter of certain bags, which yields our explicit logarithmic upper bound with linear dependence on $\delta$.

\subsection*{Acknowledgements}

The first named author was supported by the Additional Funding Programme for Mathematical Sciences, delivered by EPSRC (EP/V521917/1) and the Heilbronn Institute for Mathematical Research. 
He also thanks David Hume and John Mackay for comments. 
The second author was supported by the Natural Sciences and Engineering Research Council of Canada (NSERC).

% \newpage

\section{Preliminaries}

We first set out conventions and basic definitions. 

\subsection{Graphs}

Throughout this paper, all graphs are, unless explicitly stated otherwise, taken to be simple, undirected, and unweighted. Given a graph $\Gamma$, we denote by $V(\Gamma)$ its vertex set and $E(\Gamma)$ its (unoriented) edges. We may sometimes denote an edge with endpoints $x$, $y$ by $xy$. Metrise $V(\Gamma)$ with its path metric. Given a subset $U \subset V(\Gamma)$, we denote by $N_\Gamma[U]$ its \emph{neighbourhood}. That is, the set of vertices which lie at distance at most 1 from $U$. If $U = \{v\}$ we may simply write $N_\Gamma[v]$. If $H$ is a minor of $\Gamma$, we may write $H \prec \Gamma$. 

Given a graph $\Gamma$, a \emph{tree-decomposition} of $\Gamma$ is an ordered pair $(T, \beta)$ where $T$ is a simplicial tree, and $\beta : V(T) \to 2^{V(\Gamma)}$ is a map satisfying the following axioms:
\begin{enumerate}
    \item[(T1)] We have $V(\Gamma) = \bigcup_{v \in V(T)} \beta(v)$.

    \item[(T2)] For all $xy \in E(\Gamma)$, there exists $v \in V(T)$ such that $x,y \in \beta(v)$.

    \item[(T3)] Given $x \in V(\Gamma)$, the \emph{fibre} of $x$, $\beta^{-1}(x) := \{v \in V(T): x \in \beta(v)\}$, is connected. 
\end{enumerate}
The subsets $\beta(v)$ are called the \emph{bags} of the tree-decomposition. The \emph{width} of $(T,\beta)$ is defined to be the quantity
$\width(T,\beta) = \sup\{|\beta(v)| : v \in V(T)\} - 1$. The \emph{tree-width} of $\Gamma$, denoted $\tw(\Gamma)$, is 
$$
\tw(\Gamma) := \inf \width(T,\beta),
$$
where the infimum is taken over all tree-decompositions of $\Gamma$. Similarly, the (outer)-\emph{length} of $(T,\beta)$ is the quantity
$$
    \length(T, \beta) = \sup\{\diam_\Gamma(\beta(v)): {v \in V(T)}\}.
$$
The \emph{tree-length} of $\Gamma$, denoted $\tl(\Gamma)$, is then defined to be the infimal length over all tree-decompositions of $\Gamma$.

% \subsection{Coarse geometry}

\subsection{Hyperbolicity}

Given $\delta \geq 0$, we adopt the convention that a graph (or more generally, a geodesic space) $\Gamma$ is \emph{$\delta$-hyperbolic} if it satisfies the \emph{four point condition} with constant $\delta$. That is, 
$$
\dist(x,z) + \dist(y,w) \leq \max\{\dist(x,y) + \dist(z,w), \dist(x,w) + \dist(z,y)\} + 2\delta
$$
for all $w,x,y,z \in V(\Gamma)$. 
% There are, of course, several other equivalent definitions, and choosing a different convention has the effect of replacing $\delta$ by some constant multiple; see, for example, \cites{ghys2013groupes,chepoi2012additive} for discussions.

An important property of hyperbolic graphs is that their triangles are \emph{slim}. That is, given $\rho \geq 0$, a geodesic triangle 
% with sides $P_{xy},P_{yz},P_{zx}$ 
is called \emph{$\rho$-slim} if each side is contained in the (closed) $\rho$-neighbourhood of the
union of the other two sides. 
% i.e.
% \[
% V(P_{xy})\subseteq N_\rho\bigl(V(P_{yz})\cup V(P_{zx})\bigr),
% \]
% and similarly for the other two sides.
Similarly, 
a geodesic quadrilateral 
% with sides $P_1,P_2,P_3,P_4$ 
is \emph{$\rho$-slim} if
each side is contained in the $\rho$-neighbourhood of the union of the other
three sides. With that defined, we briefly record the following well-known fact for later use. 

\begin{lemma} {\rm \cite[\S2, Prop.~1]{ChepoiDraganEstellonHabibVaxesXiang2012}}
\label{lem:thin}
If $G$ is a $\delta$-hyperbolic graph, then every geodesic triangle in $G$
is $4\delta$-slim. Consequently, every geodesic quadrilateral in $G$ is
$8\delta$-slim.
\end{lemma}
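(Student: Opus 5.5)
We prove the lemma directly from the four point condition, first for triangles and then for quadrilaterals.

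\textbf{Triangles.} Let $x,y,z$ be vertices and let $P_{xy},P_{yz},P_{zx}$ be geodesics forming a triangle. Fix a vertex $w\in P_{xy}$, so that $\dist(x,w)+\dist(w,y)=\dist(x,y)$. We bound $\dist(w, P_{yz}\cup P_{zx})$ in two steps.

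\emph{Step 1.} We show that the Gromov product satisfies $(x\,|\,y)_w\le\delta$. Applying the four point condition to $x,y,w,z$ and comparing $\dist(x,y)+\dist(w,z)$ with the other two pairings gives
$$
\dist(x,y)+\dist(w,z)\le \max\{\dist(x,w)+\dist(y,z),\ \dist(y,w)+\dist(x,z)\}+2\delta .
$$
Combined with $\dist(x,w)+\dist(w,y)=\dist(x,y)$, this yields
$$
\dist(w,z)\le \max\{(z\,|\,x)_w,\ (z\,|\,y)_w\} \text{-type quantities}+\text{const}\cdot\delta .
$$
The cleanest route is the standard one: four point hyperbolicity with constant $\delta$ (in this sum form) is equivalent to the Gromov product inequality
$$
(a\,|\,b)_p\ge \min\{(a\,|\,c)_p,(b\,|\,c)_p\}-\delta .
$$
Take basepoint $p=w$. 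Since $w$ lies on a geodesic from $x$ to $y$, we have $(x\,|\,y)_w=0$. Hence
$$
\min\{(x\,|\,z)_w,(y\,|\,z)_w\}\le\delta .
$$

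\emph{Step 2.} Suppose without loss of generality that $(x\,|\,z)_w\le\delta$. We need the standard estimate that the distance from $w$ to a geodesic $[x,z]$ is bounded by $(x\,|\,z)_w$ plus a multiple of $\delta$. To prove it, choose $u\in P_{zx}$ with
$$
\dist(x,u)=\min\{\dist(x,w),\dist(x,z)\}
$$
(rounded to an integer). Then apply the four point condition to $w,x,u,z$, using $\dist(x,u)+\dist(u,z)=\dist(x,z)$. This gives $\dist(w,u)\le (x\,|\,z)_w+3\delta$ or similar, and hence $\dist(w,u)\le 4\delta$.

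\textbf{Main obstacle.} The main difficulty is the bookkeeping of constants in Step~2. One must handle the three-way maximum in the four point condition and the integer rounding carefully enough to land exactly on $4\delta$. This is exactly the content of the cited result, Chepoi et al.\ \S2 Prop.~1, so the precise constant may simply be taken from there.

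\textbf{Quadrilaterals.} Given a geodesic quadrilateral with vertices $a,b,c,d$ and sides $[a,b],[b,c],[c,d],[d,a]$, choose a geodesic diagonal $[a,c]$. Let $w\in[a,b]$. By triangle slimness applied to $(a,b,c)$, $w$ lies within $4\delta$ of $[b,c]\cup[a,c]$. If it is within $4\delta$ of $[b,c]$, we are done. Otherwise $w$ is within $4\delta$ of a point $w'\in[a,c]$. By slimness of $(a,c,d)$, $w'$ lies within $4\delta$ of $[c,d]\cup[d,a]$, so $w$ lies within $8\delta$ of the union of the other three sides. By symmetry the same holds for every side, and so quadrilaterals are $8\delta$-slim.
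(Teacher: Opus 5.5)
Your argument is correct, and it is more self-contained than the paper, which gives no proof at all. The paper quotes the triangle bound from Chepoi et al., and the quadrilateral bound is the ``consequently'' that your diagonal argument (split along $[a,c]$, apply $4\delta$-slimness twice) makes explicit. Deriving it yourself confirms that $4\delta$ is the right constant for the paper's normalisation of the four point condition. That normalisation uses $+2\delta$, which, as you say, is exactly equivalent to $(a\,|\,b)_p\ge\min\{(a\,|\,c)_p,(b\,|\,c)_p\}-\delta$.

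The cost is the bookkeeping in Step~2, which you leave as ``$+3\delta$ or similar'' and defer to the citation. That step is in fact immediate, so you should just do it.

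First, delete the garbled middle display of Step~1. Your first display, after substituting $\dist(x,y)=\dist(x,w)+\dist(w,y)$, already reads $\min\{2(x\,|\,z)_w,2(y\,|\,z)_w\}\le 2\delta$.

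In Step~2, set $a=\dist(x,w)$ and $L=\dist(x,z)$. Both are integers, so no rounding is needed. If $a\le L$, take $u\in P_{zx}$ with $\dist(x,u)=a$. The four point condition for the pairing $\{w,u\},\{x,z\}$ then gives
\[
\dist(w,u)+L\le \max\{a+(L-a),\ \dist(w,z)+a\}+2\delta ,
\]
that is, $\dist(w,u)\le\max\{0,2(x\,|\,z)_w\}+2\delta\le 4\delta$. If $a>L$, take $u=z$; then $\dist(w,z)<\dist(w,z)+a-L=2(x\,|\,z)_w\le 2\delta$.

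So your estimate is really $2(x\,|\,z)_w+2\delta$ rather than $(x\,|\,z)_w+3\delta$, and it lands on $4\delta$ exactly. The quadrilateral part then goes through as you wrote it. Degenerate sides, such as the possibly trivial $[u,a]$ in the paper's application, cause no trouble, since nothing above needs the vertices to be distinct.
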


% \[
% N_\rho(A)\coloneqq \{v\in V(G): \dist_\Gamma(v,A)\le \rho\}.
% \]

% \newpage

%%%%%%%%%%%%%%%%%%%%%%%%%%%%%%%%%%%%%%%%%%%%%%%%%%%%%%%%%%%%%%%%%%%%%%%%%%%%%%%%%%%%%%%%%%%%
%%%%%%%%%%%%%%%%%%%%%%%%%%%%%%%%%%%%%%%%%%%%%%%%%%%%%%%%%%%%%%%%%%%%%%%%%%%%%%%%%%%%%%%%%%%%
%%%%%%%%%%%%%%%%%%%%%%%%%%%%%%%%%%%%%%%%%%%%%%%%%%%%%%%%%%%%%%%%%%%%%%%%%%%%%%%%%%%%%%%%%%%%

\section{Tree-decompositions from tree approximation}

In order to make use of hyperbolicity, the plan is to use tree approximation in order to construct controlled tree-decompositions of (finite subgraphs of) hyperbolic graphs. We will make use of Gromov's tree approximation lemma \cite[pp.~155-157]{gromov1987hyperbolic}; see also \cite[\S2, Thm.~12]{ghys2013groupes}. The precise statement we give below is taken from {\cite[\S2, Thm.~1]{ChepoiDraganEstellonHabibVaxesXiang2012}}. 

\begin{theorem}[Gromov's tree approximation lemma]\label{thm:tree-approx}
    Let $\Gamma$ be a connected, $\delta$-hyperbolic graph. Let $U  \subset V(\Gamma)$ be a collection of $n$ distinct vertices. Then there exists an  $\R$-tree $\mathcal T$ and a map $f : U \to \mathcal T$ such that 
    $$
    \dist_\Gamma(u, v) - 2\delta \log_2(n) \leq \dist_{\mathcal T}(f(u),f(v)) \leq \dist_\Gamma(u,v)
    $$
    for all $u, v \in U$. 
\end{theorem}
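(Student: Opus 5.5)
The plan is to follow the classical Gromov argument. We fix a basepoint, replace the Gromov product on $U$ by a ``max--min'' regularisation that is exactly $0$-hyperbolic, and then realise the resulting $0$-hyperbolic (pseudo)metric in an $\R$-tree.

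\textbf{Step 1: the regularised product.} Fix $w\in U$ and write $(x|y)=(x|y)_w=\tfrac12(\dist(x,w)+\dist(y,w)-\dist(x,y))$. In this notation the four point condition with constant $2\delta$ reads
$$(x|z)\ge\min\{(x|y),(y|z)\}-\delta$$
for all $x,y,z$. For $x,y\in U$ define
$$(x|y)'=\max_{x=x_0,\dots,x_m=y}\ \min_{1\le i\le m}(x_{i-1}|x_i),$$
where the maximum is over finite chains in $U$. Since repeated points can be deleted from a chain without decreasing the minimum, we may restrict to chains of distinct points, so $m\le n-1$.

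\textbf{Step 2: comparing the two products.} The chain of length one gives $(x|y)'\ge(x|y)$. For the reverse inequality, we prove by induction on $k$ that a chain of length $m\le 2^k$ satisfies
$$(x_0|x_m)\ge\min_i(x_{i-1}|x_i)-k\delta.$$
The inductive step splits the chain at its midpoint and applies the displayed inequality once. With $k=\lceil\log_2 m\rceil$ and $m\le n-1$, this gives $(x|y)'-(x|y)\le\delta\log_2 n$. (If $\log_2(n-1)$ is not an integer, one should check that the ceiling still stays below $\log_2 n$; since $m\le n-1<n$ this works whenever $n-1$ is not a power of two. Otherwise I would use the sharper chain-splitting bound $\log_2 m$ from the standard reference, which is the one delicate point.) Moreover $(\cdot|\cdot)'$ satisfies the $0$-version of the displayed inequality by concatenating chains, and $(x|x)'=\dist(x,w)$ because $(x|z)\le\dist(x,w)$ for every $z$.

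\textbf{Step 3: the tree.} Set $d'(x,y)=\dist(x,w)+\dist(y,w)-2(x|y)'$. By Step 2,
$$\dist(x,y)-2\delta\log_2 n\le d'(x,y)\le\dist(x,y),$$
which are exactly the required inequalities. The $0$-hyperbolic inequality for $(\cdot|\cdot)'$ translates into the four point condition with constant $0$ for $d'$, and also yields the triangle inequality, since the Gromov products are bounded by the distances to $w$. Thus $d'$ is a $0$-hyperbolic pseudometric on $U$. Quotienting out zero distances, which is harmless since $f$ need not be injective, we obtain a finite $0$-hyperbolic metric space. Any such space embeds isometrically in an $\R$-tree $\mathcal T$: concretely, take the union of geodesic segments from the root $f(w)$ to each $f(x)$ of length $\dist(x,w)$, and glue the segments for $x$ and $y$ along their initial pieces of length $(x|y)'$. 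The ultrametric-type inequality ensures the gluing is consistent, producing a finite simplicial-metric tree. The distance between $f(x)$ and $f(y)$ in this tree is then precisely $d'(x,y)$.

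I expect the main obstacle to be the bookkeeping in Step 2, namely getting exactly $\delta\log_2 n$ rather than $\delta\lceil\log_2 n\rceil$. The consistency of the gluing in Step 3 is standard once $0$-hyperbolicity is established.
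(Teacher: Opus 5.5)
The paper gives no proof of this statement; it quotes it from Chepoi et al.\ (after Gromov and Ghys--de la Harpe). Your Steps 1 and 3 are exactly the classical argument behind that citation, and they are fine. The gap is the constant, which you flagged but did not resolve.

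The midpoint induction loses $\delta\lceil\log_2 m\rceil$ for a chain of length $m$. You can drop $w$ from the interior of chains, since a link through $w$ has product $(x|w)_w=0$. Even so, you only get $m\le n-2$, so what you actually prove is $\dist_\Gamma(x,y)-d'(x,y)\le 2\delta\lceil\log_2(n-2)\rceil$. Your parenthetical check is backwards. The inequality $\lceil\log_2(n-1)\rceil\le\log_2 n$ holds only when $n-1$ or $n$ is a power of two, so it fails already at $n=6$. With the sharper count $m\le n-2$ it still fails at $n=7$, since $\lceil\log_2 5\rceil=3>\log_2 7$.

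Your fallback does not work either. The standard lemma only says that chains of length at most $2^k$ lose at most $k\delta$. No bound of the form $\delta\log_2 m$ can be extracted from your displayed three-point inequality. The values $(x_i|x_j)=A-\delta\lceil\log_2|i-j|\rceil$ satisfy that inequality, because $|i-k|\le 2\max\{|i-j|,|j-k|\}$. Yet they lose exactly $\delta\lceil\log_2 m\rceil$ along the chain $x_0,\dots,x_m$.

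For $m=5$ this comes from an honest $\delta$-hyperbolic metric space. Take $\dist(w,x_i)=R$ and $\dist(x_i,x_j)=D+2\delta\lceil\log_2|i-j|\rceil$, with $D\ge 6\delta$ and $2R\ge D+6\delta$. The four-point condition can be checked directly on all quadruples. Here $n=7$, and your $d'$ gives $\dist(x_0,x_5)-d'(x_0,x_5)=6\delta>2\delta\log_2 7$. Your proof uses nothing beyond the four-point condition on $U$. So the construction with an arbitrary basepoint in $U$ cannot deliver the stated constant. Reaching exactly $2\delta\log_2 n$ would need an additional idea, for instance a careful choice of basepoint or a different tree.

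What your argument does establish is the bound with $2\delta\lceil\log_2 n\rceil$ in place of $2\delta\log_2 n$. For the paper this only shifts additive constants, e.g.\ in Proposition~\ref{prop:tree-decomp-from-tree-approx} and in the explicit bound of Theorem~\ref{thm:main-planar}.
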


We will apply the above to prove the following proposition.

\begin{proposition}\label{prop:tree-decomp-from-tree-approx}
    Let $\Gamma$ be a connected, $\delta$-hyperbolic graph. Let $H \subset \Gamma$ be a finite subgraph. Then there exists a tree-decomposition $(T,\beta)$ of $H$ such that 
    $$
    \diam_\Gamma(\beta(v)) \leq 1 + 2\delta \log_2(|V(H)|),
    $$
    for all $v \in V(T)$.
\end{proposition}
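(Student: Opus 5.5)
Apply Theorem~\ref{thm:tree-approx} to $U = V(H)$ with $n = |V(H)|$. This gives an $\R$-tree $\mathcal T$ and a map $f : V(H) \to \mathcal T$ that is $1$-Lipschitz and distorts distances by at most the additive error $\epsilon := 2\delta\log_2(n)$. Replace $\mathcal T$ by the finite subtree spanned by $f(V(H))$. This is a finite metric tree, and we subdivide it so that it becomes a simplicial tree $T_0$ containing every point $f(u)$ as a vertex, together with every branch point. Next, extend $f$ to edges. For each edge $xy \in E(H)$ we have $\dist_{\mathcal T}(f(x),f(y)) \le \dist_\Gamma(x,y) = 1$, so the geodesic segment $[f(x),f(y)]$ in $\mathcal T$ has length at most $1$.

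The bags are defined by thickening. For a point $p$ in the spanned subtree, set
$$
\beta(p) = \{u \in V(H) : \dist_{\mathcal T}(f(u), p) \le \tfrac12\}.
$$
Restrict $p$ to the vertex set of a sufficiently fine subdivision $T$: subdivide so that every segment $[f(x),f(y)]$ for $xy\in E(H)$ contains its midpoint as a vertex. Each bag has small ambient diameter. If $u,v \in \beta(p)$, then $\dist_{\mathcal T}(f(u),f(v)) \le 1$, and hence
$$
\dist_\Gamma(u,v) \le \dist_{\mathcal T}(f(u),f(v)) + \epsilon \le 1 + 2\delta\log_2(|V(H)|),
$$
which is exactly the bound claimed in Proposition~\ref{prop:tree-decomp-from-tree-approx}.

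It remains to check the axioms.
\begin{itemize}
\item (T1) holds because $u \in \beta(f(u))$.
\item (T2) holds because for $xy \in E(H)$ the midpoint $m$ of $[f(x),f(y)]$ lies within $\frac12$ of both endpoints, so $x,y \in \beta(m)$.
\item (T3) says the fibre of $u$ is the set of vertices of $T$ lying in the closed ball of radius $\frac12$ about $f(u)$ in the tree. Balls in $\R$-trees are convex, so this set is a subtree. The only subtlety is that restricting to the vertices of $T$ keeps it connected.
\end{itemize}

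The main obstacle is precisely this discretisation issue in (T3). A ball in the metric tree is connected, but its intersection with the vertex set of $T$ must be connected in the simplicial structure. If a ball boundary falls strictly inside an edge of $T$, no problem arises, since the vertices inside the ball still span a subtree. This works because the unique tree path between two vertices of the ball passes only through vertices on the geodesic between them, and by convexity that geodesic stays in the ball. I would also make sure $T$ is nonempty and finite. If $H$ has no vertices the statement is trivial; if $n=1$ then $\epsilon = 0$ and a single bag suffices. Thus the argument reduces entirely to Theorem~\ref{thm:tree-approx} plus convexity of balls in trees.
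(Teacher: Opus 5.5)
Your proposal is correct and is essentially the paper's proof. The paper likewise applies Theorem~\ref{thm:tree-approx} to $U=V(H)$ and takes as decomposition tree the convex hull of the points $f(v)$ and the edge midpoints $m_{xy}$, with branch points added as vertices. It uses the same bags $\beta(p)=\{v : \dist_{\mathcal T}(f(v),p)\le \frac12\}$, checks (T3) via convexity of balls in $\R$-trees, and gets the diameter bound from the left-hand inequality of the tree approximation. Your observation that the midpoints already lie in the hull of $f(V(H))$, so that only a subdivision is needed, matches that construction exactly.
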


\begin{proof}
To ease notation, set $n \coloneqq |V(H)|$ and 
$\alpha\coloneqq 2\delta\log_2 n$.
Applying Theorem~\ref{thm:tree-approx} to $\Gamma$ with \(U=V(H)\), we obtain an $\R$-tree $\mathcal T$ and a map $f\colon V(H)\to \mathcal T$
such that, for all \(x,y\in V(H)\),
\[
\dist_\Gamma(x,y)-\alpha\le \dist_{\mathcal T}(f(x),f(y))\le \dist_\Gamma(x,y).
\tag{1}
\]
For every edge \(xy\in E(H)\), we have \(\dist_\Gamma(x,y)=1\), and hence by \((1)\) we have that
$$
\dist_{\mathcal T}(f(x),f(y))\le 1.
$$
Let \(m_{xy}\) be the midpoint of the segment \([f(x),f(y)]\) in \(\mathcal T\). 
Then
\[
\dist_{\mathcal T}(f(x),m_{xy})\le \frac12,
\qquad
\dist_{\mathcal T}(f(y),m_{xy})\le \frac12 .
\tag{2}
\]
Set 
$
S\coloneqq \{f(v):v\in V(H)\}\cup \{m_{xy}:xy\in E(H)\}
$. 
Let \(T'\) be the finite simplicial tree obtained from the convex hull of \(S\) in \(\mathcal T\), by taking as vertices all points of \(S\) together with all branch points of this convex hull. For \(p\in V(T')\), define
\[
\beta(p)\coloneqq \{v\in V(H): \dist_{\mathcal T}(f(v),p)\le \tfrac12\}.
\]
We claim that \((T',\beta)\) is a tree-decomposition of \(H\). First, every vertex \(v\in V(H)\) lies in some bag, since \(f(v)\in V(T')\) and $\dist_{\mathcal T}(f(v),f(v))=0\le \frac12$,
so \(v\in \beta(f(v))\).
Second, every edge is covered. Indeed, if \(xy\in E(H)\), then \(m_{xy}\in V(T')\), and by \((2)\), we get
$x,y\in \beta(m_{xy})$.
Third, the bags containing a fixed vertex form a connected subtree of \(T'\). Fix \(v\in V(H)\). Then
\[
\{p\in V(T'):v\in \beta(p)\}
=
\{p\in V(T'): \dist_{\mathcal T}(f(v),p)\le \tfrac12\}.
\]
This is the set of vertices of \(T'\) lying in the closed ball of radius \(1/2\) around \(f(v)\) in \(\mathcal T\). Closed balls in an \(\mathbb R\)-tree are convex, and so if \(p,q\) belong to this set, then every vertex of \(T'\) on the \(p\)-to-\(q\) path also belongs to it. Therefore this set is connected.
Thus \((T',\beta)\) is a tree-decomposition of \(H\).

It remains to bound the ambient diameter of each bag as a subset of $\Gamma$. Let \(p\in V(T')\), and let \(x,y\in \beta(p)\). 
Then one can see that
\[
\dist_{\mathcal T}(f(x),p)\le \frac12,
\qquad
\dist_{\mathcal T}(f(y),p)\le \frac12.
\]
Therefore
\[
\dist_{\mathcal T}(f(x),f(y))
\le \dist_{\mathcal T}(f(x),p)+\dist_{\mathcal T}(p,f(y))
\le 1.
\]
Using the left inequality in \((1)\), we get
\[
\dist_\Gamma(x,y)-\alpha
\le \dist_{\mathcal T}(f(x),f(y))
\le 1.
\]
Hence we obtain $\dist_\Gamma(x,y)\le 1+\alpha
=1+2\delta\log_2 n$.
% Since \(\dist_\Gamma(x,y)\) is an integer, we have $\dist_\Gamma(x,y)\le \left\lfloor 1+2\delta\log_2 n\right\rfloor$.
Thus every bag has diameter in $\Gamma$ at most
$ 1+2\delta\log_2 n$.
\end{proof}

Applying Proposition~\ref{prop:tree-decomp-from-tree-approx} to the specific case where $H = \Gamma$, we recover the following.

\begin{corollary}\label{lem:tl-from-gromov}
If \(\Gamma\) is a finite, connected, \(\delta\)-hyperbolic graph, then 
$$
\tl(\Gamma)\le  1+2\delta\log_2 (|V(\Gamma)|).
$$
\end{corollary}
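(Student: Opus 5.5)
The plan is to obtain this as the special case $H = \Gamma$ of Proposition~\ref{prop:tree-decomp-from-tree-approx}. Since $\Gamma$ is finite, it is in particular a finite subgraph of itself. It is also connected and $\delta$-hyperbolic, so the hypotheses of the proposition hold with $H=\Gamma$. The proposition then produces a tree-decomposition $(T,\beta)$ of $\Gamma$ with
\[
\diam_\Gamma(\beta(v)) \le 1 + 2\delta\log_2(|V(\Gamma)|) \quad \text{for every } v \in V(T).
\]

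Next I will compare this with the definitions. The length of a tree-decomposition is measured using the metric of $\Gamma$ itself, namely $\length(T,\beta) = \sup_v \diam_\Gamma(\beta(v))$. Because we took $H=\Gamma$, the ambient metric used in the proposition coincides with the metric in the definition of tree-length. Taking the supremum over the finitely many bags gives
\[
\length(T,\beta)\le 1+2\delta\log_2(|V(\Gamma)|).
\]
Since $\tl(\Gamma)$ is the infimum of the lengths of all tree-decompositions, it is at most this bound.

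I do not expect a real obstacle here. The only point to check is that the notions of diameter agree: the proposition bounds the ambient diameter, and for $H=\Gamma$ this is exactly the intrinsic diameter used in defining tree-length. The edge case $|V(\Gamma)|=1$ is also harmless. The bound becomes $1$, and the actual tree-length is $0$.
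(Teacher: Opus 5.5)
Your proposal is correct and matches the paper exactly: the corollary is stated as the special case $H=\Gamma$ of Proposition~\ref{prop:tree-decomp-from-tree-approx}. In that case the ambient metric is the metric used to define tree-length, so the bag-diameter bound is immediately a bound on $\tl(\Gamma)$.
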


In the following sections, the above statements will play an important role in both of the proofs we present.

% \newpage

%%%%%%%%%%%%%%%%%%%%%%%%%%%%%%%%%%%%%%%%%%%%%%%%%%%%%%%%%%%%%%%%%%%%%%%%%%%%%%%%%%%%%%%%%%%%
%%%%%%%%%%%%%%%%%%%%%%%%%%%%%%%%%%%%%%%%%%%%%%%%%%%%%%%%%%%%%%%%%%%%%%%%%%%%%%%%%%%%%%%%%%%%
%%%%%%%%%%%%%%%%%%%%%%%%%%%%%%%%%%%%%%%%%%%%%%%%%%%%%%%%%%%%%%%%%%%%%%%%%%%%%%%%%%%%%%%%%%%%

% \newpage

\section{Separation profiles via efficient hulls}

In this section we walk through the proof of Theorem~\ref{thm:main-apex}.

\subsection{Efficient hulls}

The first step is, given a connected subgraph $H$ of our $\delta$-hyperbolic graph $\Gamma$, to construct an `efficient hull' of this subgraph which is $\delta'$-hyperbolic for some uniform $\delta' \geq 0$ depending only on $\delta$. 
We shall use the following standard stability property of hyperbolicity under quasi-isometric embeddings.

\begin{lemma}
{\rm\cite[\S\hspace{0pt}III.H, Thm.~1.9]{BridsonHaefliger1999}}
\label{lem:qi-hyp}
For every $\Delta,A,\varepsilon$ there exists
$\Delta'=\Delta'(\Delta,A,\varepsilon)$ such that the following holds.
If $X$ and $Y$ are geodesic metric spaces, $Y$ is $\Delta$-hyperbolic, and
$f:X\to Y$ is an $(A,\varepsilon)$-quasi-isometric embedding, then $X$ is
$\Delta'$-hyperbolic.
\end{lemma}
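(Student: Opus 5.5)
This is the stability of hyperbolicity under quasi-isometric embeddings, quoted from Bridson--Haefliger. The proof I would give is the standard one via the Morse lemma, together with the equivalence between the four point condition and slim triangles.

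\emph{Step 1: reduce to slim triangles.} For geodesic spaces, the four point condition with constant $\delta$ implies that geodesic triangles are $4\delta$-slim (compare Lemma~\ref{lem:thin}). Conversely, $\rho$-slim triangles imply the four point condition with a constant depending only on $\rho$. So it suffices to show that geodesic triangles in $X$ are $\rho'$-slim for some $\rho'=\rho'(\Delta,A,\varepsilon)$. Converting back to the four point convention then yields $\Delta'$.

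\emph{Step 2: push triangles forward.} Take a geodesic triangle in $X$ with sides $\gamma_1,\gamma_2,\gamma_3$. Each image $f\circ\gamma_i$ is an $(A,\varepsilon)$-quasi-geodesic in $Y$; it is not continuous, but it can be tamed to a continuous quasi-geodesic with comparable constants. The Morse lemma (stability of quasi-geodesics in $\Delta$-hyperbolic spaces) then gives $R=R(\Delta,A,\varepsilon)$ such that $f\circ\gamma_i$ lies within Hausdorff distance $R$ of a geodesic $[f(x_i),f(x_{i+1})]$. These geodesics form a $4\Delta$-slim triangle in $Y$. Hence each point of $f\circ\gamma_1$ lies within $2R+4\Delta$ of some point $f(q)$ with $q$ on $\gamma_2\cup\gamma_3$.

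\emph{Step 3: pull back.} Suppose $p\in\gamma_1$ and $\dist_Y(f(p),f(q))\le 2R+4\Delta$. The lower quasi-isometric bound gives
\[
\dist_X(p,q)\le A(2R+4\Delta+\varepsilon).
\]
Hence triangles in $X$ are $\rho'$-slim with $\rho'=A(2R+4\Delta+\varepsilon)$, and Step~1 finishes the argument.

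The main obstacle is the Morse lemma in Step~2. Its proof, via the exponential divergence of geodesics in hyperbolic spaces and a projection argument, is where all the real work lies. I would cite it from Bridson--Haefliger rather than reprove it.
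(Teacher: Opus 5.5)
Your proof is correct and is the standard argument behind the cited result (Bridson--Haefliger's proof of III.H Thm.~1.9); the paper gives no proof of its own and just quotes that reference. Pushing a geodesic triangle forward, applying the Morse lemma and slimness in $Y$, and pulling back with the lower quasi-isometric bound gives slim triangles in $X$. Translating between slim triangles and the four point condition then accounts for the paper's definition of hyperbolicity.
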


More precisely, we prove the following proposition, which may be of independent interest.

\begin{proposition}\label{lem:hull}
For all $\delta \geq 0$ there exists $\Delta_\delta \geq 0$ such that the following holds. 
Let $\Gamma$ be a connected $\delta$-hyperbolic graph and let
$H\subseteq \Gamma$ be a finite connected subgraph with $|V(H)|=n\ge 2$.
Then there is a finite connected subgraph $F\subseteq \Gamma$
 such that:
\begin{enumerate}
    \item $H\subseteq F$,
    \item $|V(F)|\le (16\delta+3)n^6$,
    \item $F$ is $\Delta_\delta$-hyperbolic.
\end{enumerate}
\end{proposition}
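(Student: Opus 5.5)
The plan is to take $F$ to be $H$ together with a family of $\Gamma$-geodesics between pairs of vertices of $H$, chosen so that $F$ is a uniform quasi-isometric embedding of $\Gamma$. Then Lemma~\ref{lem:qi-hyp} gives the uniform hyperbolicity constant $\Delta_\delta$. The naive choice, adding one geodesic $\gamma_{xy}$ for every pair $x,y\in V(H)$, already gives $|V(F)|\le n + n^2\,\diam_\Gamma(V(H))\le n+n^3$, since $H$ is connected and so $\diam_\Gamma V(H)\le n-1$. The difficulty is that a union of geodesics between points of $H$ need not be quasi-convex in its own intrinsic metric. Two points lying on different added geodesics may be close in $\Gamma$ and yet far apart in $F$.

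To fix this I will add a second layer. For every pair of added geodesics $\gamma,\gamma'$ and every pair of vertices $p\in\gamma$, $q\in\gamma'$ with $\dist_\Gamma(p,q)\le 8\delta+1$ (say), I add a $\Gamma$-geodesic $\sigma_{pq}$. The resulting count is
\[
\binom{n}{2}^2 \cdot n^2\cdot (8\delta+1)\;\lesssim\;(16\delta+3)n^6,
\]
coming from pairs of geodesics, pairs of points on them (each geodesic has at most $n$ vertices), and the length of each connector. This matches the stated bound, so I believe this is the intended construction. Connectedness of $F$ is clear because $H$ is connected and every added path is attached to it.

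The main step is to show $\dist_F(u,v)\le A\,\dist_\Gamma(u,v)+\varepsilon$ for all $u,v\in V(F)$, with $A,\varepsilon$ depending only on $\delta$. The inequality $\dist_\Gamma\le\dist_F$ is trivial, so this gives an $(A,\varepsilon)$-quasi-isometric embedding. I would first handle $u,v$ lying on first-layer geodesics $\gamma_{xy}$ and $\gamma_{zw}$ with $x,y,z,w\in H$. Connector vertices lie within $8\delta+1$ of such geodesics inside $F$, so the general case reduces to this one.

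Take a $\Gamma$-geodesic $\eta$ from $u$ to $v$, and consider the geodesic quadrilateral formed by the subpaths of $\gamma_{xy},\gamma_{zw}$ through $u,v$ and the geodesics $\gamma_{xz}$ (or $\gamma_{yw}$). Lemma~\ref{lem:thin} makes this quadrilateral $8\delta$-slim. Walking along $\eta$, each point is then $8\delta$-close to one of the sides, all of which lie in $F$. When the nearest side switches from one geodesic to another, there are points $p,q$ on the two sides with $\dist_\Gamma(p,q)\le 16\delta+1$. I therefore need the connector threshold to be $16\delta+1$ rather than $8\delta+1$; only constants change. The connector $\sigma_{pq}$ then lets us hop between sides inside $F$.

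Only boundedly many hops are needed, because there are four sides and the switch points can be taken to occur in order along $\eta$. The resulting $F$-path has length at most $\dist_\Gamma(u,v)$ plus a constant multiple of $\delta$, plus errors from backtracking along sides. These errors are controlled by the geodesicity of the sides together with the triangle inequality.

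I expect this bookkeeping to be the main obstacle. Because $\eta$ is not in $F$, one must check that travelling along the sides between consecutive switch points costs only $O(\dist_\Gamma(u,v)+\delta)$. With that bound in hand, Lemma~\ref{lem:qi-hyp} applies to the geodesic space given by the metric graph $F$ and finishes the proof.
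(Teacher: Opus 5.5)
Your construction is the paper's: $Y$ is $H$ plus one $\Gamma$-geodesic per pair of vertices of $H$, and $F$ is $Y$ plus a $\Gamma$-geodesic between every pair of vertices of $Y$ at $\Gamma$-distance at most $16\delta+1$. Your vertex count also fits under $(16\delta+3)n^6$ once the threshold is $16\delta+1$. Your slim quadrilateral, with sides $[u,x]\subseteq\gamma_{xy}$, $\gamma_{xz}$, $[z,v]\subseteq\gamma_{zw}$ and $\eta$, is exactly the one the paper uses to show that every $\Gamma$-geodesic between vertices of $Y$ stays in the $8\delta$-neighbourhood of $Y$.

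\textbf{The gap.} The quasi-isometry estimate is the heart of the argument, and you leave it open. Your claim that boundedly many hops suffice ``because the switch points can be taken to occur in order'' is not justified. Under a nearest-side rule, two sides that fellow-travel near a corner can alternate as nearest, so the number of switches is not bounded independently of $\dist_\Gamma(u,v)$. You also explicitly defer the cost of travelling along sides between switches.

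\textbf{The fix.} You are aiming for an additive bound, but a quasi-isometric embedding only needs a multiplicative one. Once you see this, the difficulty disappears. Write $\eta=(p_0,\dots,p_m)$ and choose $y_i\in V(Y)$ with $\dist_\Gamma(p_i,y_i)\le 8\delta$, $y_0=u$, $y_m=v$. Then $\dist_\Gamma(y_i,y_{i+1})\le 16\delta+1$, so $F$ contains a $y_i$--$y_{i+1}$ path of length at most $16\delta+1$. Concatenating these paths gives $\dist_F(u,v)\le(16\delta+1)\dist_\Gamma(u,v)$. This is exactly the paper's argument, and it needs no control on the number of hops, since there are at most $m$ of them.

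Your own scheme survives for the same reason:
\begin{itemize}
\item each switch costs $O(\delta)$, and there are at most $m$ switches;
\item travelling along a geodesic side between points near $p_s$ and $p_t$ costs at most $|t-s|+16\delta$.
\end{itemize}
Together these give $\dist_F(u,v)=O((\delta+1)\dist_\Gamma(u,v)+\delta)$. If you genuinely want $\dist_F\le\dist_\Gamma+O(\delta)$, use a last-exit rule. On the current side, travel to a point near the \emph{last} $p_t$ that is $8\delta$-close to that side, then hop to a side close to $p_{t+1}$. No side is ever revisited, so there are at most two hops plus a final connector to $v$. This gives a stronger estimate than the proposition needs.
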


\begin{proof}
For every unordered pair of distinct vertices $x,y\in V(H)$, choose one $\Gamma$-geodesic $P_{xy}$ from $x$ to $y$. 
We first enlarge $H$ by adding all these geodesics. 
Namely, let $Y$ be the subgraph of $\Gamma$ obtained from $H$ by adding all paths $P_{xy}$.
Since $H$ is connected, for every $x,y\in H$ we have
    \[
    \dist_\Gamma(x,y)\le \dist_{H}(x,y)\le n-1.
    \]
Thus each path $P_{xy}$ has at most $n$ vertices. 
Therefore
    \[
    |V(Y)|
    \le n+\binom n2 n
    \le n^3.
    \]
Moreover, $Y$ is connected, since it contains the connected graph $H$ and
    each added path $P_{xy}$ has both endpoints in $H$.
    We claim the following:
    %\begin{clm}\label{clm:quasi-convex}
    Every $\Gamma$-geodesic between two vertices of $Y$ lies in the
    $8\delta $-neighbourhood of $Y$.
    %\end{clm}
    %\begin{clmproof}
    Let $u,v\in V(Y)$, and let $Q$ be a $\Gamma$-geodesic from $u$ to $v$. 
    By the construction of $Y$, there exist $a,b,c,d\in V(H)$ such that $u\in P_{ab}$, and $v\in P_{cd}$.
    Here, if necessary, we allow the trivial path $P_{aa}$ when $u=a\in H$, and similarly for $v$.
    Consider the geodesic quadrilateral with sides
    $[u,a]\subseteq P_{ab}$, $P_{ad}$, $[d,v]\subseteq P_{cd}$, and $Q$, see Figure~\ref{fig:F}.
    By Lemma~\ref{lem:thin}, geodesic quadrilaterals in $\Gamma$ are
    $8\delta$-slim. Hence every vertex of $Q$ is within distance at most $8\delta $ from
    \[
    P_{ab}\cup P_{ad}\cup P_{cd}\subseteq Y.
    \]
    Therefore the claim is proved.
    %\end{clmproof}

    Now we perform a second enlargement. 
    Let $F$ be the subgraph of $\Gamma$ obtained from $Y$ as follows: for every unordered pair of distinct vertices
    $y,z\in V(Y)$ with $\dist_\Gamma(y,z)\le 16\delta +1$,
    add one chosen $\Gamma$-geodesic from $y$ to $z$. 
    Then $F$ is finite and connected, and $H\subseteq Y\subseteq F$.
    Indeed, $Y$ is connected and $F$ is obtained from $Y$ by adding paths.
    
    We next bound the number of vertices of $F$. There are at most $|V(Y)|^2$ pairs of vertices of $Y$, and each geodesic added in the second step has at most $16\delta +2$
    vertices. Hence
    \[
    \begin{aligned}
    |V(F)|
    &\le |V(Y)|
       +\bigl(16\delta +2\bigr)|V(Y)|^2 \\
    &\le \bigl(16\delta +3\bigr)|V(Y)|^2 \\
    &\le \bigl(16\delta +3\bigr)n^6.
    \end{aligned}
    \]

    It remains to prove that $F$ is uniformly hyperbolic. We show that the inclusion $\iota\colon F\hookrightarrow \Gamma$
    is a quasi-isometric embedding with constants depending only on $\delta$.
    First let $y,z\in V(Y)$. 
    Let $y=p_0,p_1,\ldots,p_m=z$ be a $\Gamma$-geodesic from $y$ to $z$, so that $m=\dist_\Gamma(y,z)$. 
    By the above claim, for each $i$ we can choose
    $y_i\in V(Y)$ such that $\dist_\Gamma(p_i,y_i)\le 8\delta$,
    with $y_0=y$ and $y_m=z$. 
    Then
    \[
    \dist_\Gamma(y_i,y_{i+1})
    \le
    8\delta +1+8\delta 
    =
    16\delta +1.
    \]
    Therefore, by construction, the graph $F$ contains a path from $y_i$ to $y_{i+1}$ of length at most $16\delta +1$. 
    Concatenating these paths gives
    \[
    \dist_F(y,z)
    \le
    \bigl(16\delta +1\bigr)\dist_\Gamma(y,z)
    \qquad
    \text{for all } y,z\in V(Y).
    \]
    
    Now let $p,r\in V(F)$ be arbitrary. Since every vertex of $F$ either lies in
    $Y$ or lies on one of the geodesics added in the second step, there exist
    vertices $y,y'\in V(Y)$ such that
    \[
    \dist_F(p,y)\le 16\delta +1,
    \qquad
    \dist_F(r,y')\le 16\delta +1.
    \]
    Using the estimate above for vertices of $Y$, we get
    \[
    \begin{aligned}
    \dist_F(p,r)
    &\le \dist_F(p,y)+\dist_F(y,y')+\dist_F(y',r) \\
    &\le 2\bigl(16\delta +1\bigr)
       +\bigl(16\delta +1\bigr)\dist_\Gamma(y,y') \\
    &\le 2\bigl(16\delta +1\bigr)
       +\bigl(16\delta +1\bigr)
         \bigl(\dist_\Gamma(p,r)+2(16\delta +1)\bigr) \\
    &=
    \bigl(16\delta +1\bigr)\dist_\Gamma(p,r)
    +2\bigl(16\delta +1\bigr)^2
    +2\bigl(16\delta +1\bigr).
    \end{aligned}
    \]
    The reverse inequality $\dist_\Gamma(p,r)\le \dist_F(p,r)$ holds because $F$ is a subgraph of $\Gamma$. Hence the inclusion
    $\iota\colon F\hookrightarrow \Gamma$ is a quasi-isometric embedding with constants
    depending only on $\delta$.
    By Lemma~\ref{lem:qi-hyp}, since $\Gamma$ is $\delta$-hyperbolic, the graph $F$
    is $\Delta_\delta$-hyperbolic for some constant $\Delta_\delta$ depending
    only on $\delta$.
    % 
    % Finally, if $G$ is $H$-minor-free, then every subgraph of $G$ is
    % $H$-minor-free. Since $F\subseteq G$, it follows that $F$ is
    % $H$-minor-free.
\end{proof}

\begin{figure}
    \centering
    \includegraphics[width=0.5\linewidth]{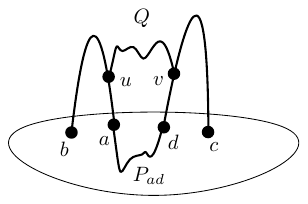}
    \caption{Geodesic quadrilateral}
    \label{fig:F}
\end{figure}

\subsection{(Weighted) separators in apex-minor-free graphs}

The following theorem of Coudert, Ducoffe, and Nisse is the structural input
from the theory of apex-minor-free graphs.

\begin{theorem}[{\cite[Thm.~4.9]{CoudertDucoffeNisse2016}}]
\label{lem:tw-tl-apex}
Let $A$ be a fixed apex graph. Then there exists a constant $c_A>0$,
depending only on $A$, such that for every connected $A$-minor-free graph $\Gamma$, we have $\tw(\Gamma)\le 3c_A\bigl(\tl(\Gamma)+1\bigr)$.
\end{theorem}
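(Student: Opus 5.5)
The plan is to argue by contrapositive, through a linear grid-contraction theorem. If an $A$-minor-free graph has large tree-width, it contracts onto a large ``partially triangulated grid''. Such a grid has large tree-length, and tree-length does not increase under contraction. All three steps are linear in the parameter, which gives the linear bound $\tw(\Gamma)\le 3c_A(\tl(\Gamma)+1)$.

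\textbf{Step 1 (structural input).} I would take as the deep ingredient the contraction theorem of Fomin--Golovach--Thilikos. For each apex graph $A$ there is $c'_A>0$ such that every connected $A$-minor-free graph with $\tw(\Gamma)\ge k$ contracts onto a partially triangulated $g\times g$ grid $\Gamma_g$ with $g\ge k/c'_A$. Here $\Gamma_g$ is the planar grid with some diagonals added inside square faces. The apex hypothesis is exactly what makes this a contraction statement rather than merely a minor statement: with $K_{3,t}$-type minors one could contract to a single vertex plus a grid, which has tiny diameter. This step would simply be cited.

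\textbf{Step 2 (monotonicity).} Tree-length is monotone under edge contraction. Let $(T,\beta)$ be a tree-decomposition of $\Gamma$ and $\pi\colon\Gamma\to\Gamma'$ the contraction map. Then $(T,\pi\circ\beta)$ is a tree-decomposition of $\Gamma'$: fibres of contracted vertices are unions of connected fibres that overlap along the contracted connected sets, so they remain connected. Moreover $\dist_{\Gamma'}(\pi x,\pi y)\le \dist_\Gamma(x,y)$, so $\tl(\Gamma_g)\le\tl(\Gamma)$.

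\textbf{Step 3 (lower bound on tree-length of $\Gamma_g$).} This is the step I expect to carry the real content. Each added diagonal joins two corners of a unit square, so in $\Gamma_g$ the graph distance is at least the Chebyshev distance $\max(|\Delta x|,|\Delta y|)$ of grid coordinates. Let $(T,\beta)$ be any tree-decomposition of $\Gamma_g$. By the standard argument, some bag $B=\beta(v)$ is a balanced separator: each component of $\Gamma_g- B$ has at most half the vertices. Suppose $\diam(B)<g/4$. Then $B$ lies in a Chebyshev box of side $<g/4$, and the complement of that box in the grid contains a connected set of more than $g^2/2$ vertices (the union of all rows and columns missing the box), which contradicts balance. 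Hence $\tl(\Gamma_g)\ge g/4$ up to additive constants.

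Combining the three steps gives $k/c'_A\le g\le 4\tl(\Gamma)+O(1)$, so $\tw(\Gamma)\le c_A(\tl(\Gamma)+1)$ after adjusting constants. The factor $3$ in the statement would come out of tracking these constants, or from passing through a $3$-approximate separator form of the grid argument. The main obstacle is the correct use of Step 1, i.e.\ ensuring a \emph{contraction} (not just a minor) with linear dependence on $k$. If that citation is unavailable, I would instead try a layering argument via linear local tree-width of apex-minor-free graphs (Demaine--Hajiaghayi). Its weak point is merging the local decompositions of balls of radius $\tl(\Gamma)$ around bags into a single global decomposition without losing linearity.
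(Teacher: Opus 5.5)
\paragraph{Where the gap is.} The paper gives no proof of this statement; it quotes it from Coudert--Ducoffe--Nisse. Your three-step scheme (contraction obstruction, contraction-monotonicity of tree-length, linear lower bound on the tree-length of the obstruction) is the natural route, and I believe it is the one behind the cited result. Your Step~2 is correct.

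The problem is that Step~1 is misquoted, and in the form you state it, it is false. The Fomin--Golovach--Thilikos obstruction $\Gamma_k$ is not a grid with diagonals only inside square faces. It is a triangulated grid in which, in addition, one corner is joined to \emph{every} vertex of the outer face, and this cone cannot be removed.

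Here is a counterexample to your version. Let $W_k$ be the $k\times k$ grid plus one vertex adjacent to the whole outer cycle. It is planar (so $K_5$-minor-free), has $\tw(W_k)\ge k$, and all its faces have length at most $4$.
\begin{itemize}
\item Suppose $W_k$ contracted onto a $g\times g$ grid with only internal diagonals, with $g\ge 4$. Take a cycle $\gamma$ through the branch sets $Y_1,\dots,Y_L$ of the target's outer cycle in order, where $L=4(g-1)$.
\item The inner branch sets form a connected subgraph missing $\gamma$. So one closed disc bounded by $\gamma$ contains only vertices of $\bigcup Y_i$.
\item Label these vertices by their index in $\mathbb{Z}/L$. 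Adjacent labels differ by at most $2$, and faces have length at most $4$. Hence each face in the disc has winding number $0$, while $\gamma$ has winding number $1$, a contradiction.
\end{itemize}
So apex-minor-freeness rules out a vertex dominating the whole grid, as you remark, but not one dominating its boundary.

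\paragraph{Effect on Step~3.} This breaks Step~3 for the graph the theorem actually produces. The bound ``graph distance $\ge$ Chebyshev distance'' fails there, because any two outer vertices are at distance at most $2$. A bag of small diameter, such as the whole outer cycle, therefore need not lie in a small box.

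\paragraph{Repair: work deep inside.}
\begin{itemize}
\item A path shorter than the Chebyshev depth of its start never reaches the outer cycle. It therefore uses only grid edges and diagonals, and the Chebyshev bound holds along it.
\item For large $g$, the vertices of depth at least $g/8$ form a connected block $K$ with more than $g^2/2$ vertices.
\item A balanced-separator bag of diameter less than $g/16$ either misses $K$, or lies in a box of side at most $g/8+1$ around a point of $K$.
\item In the first case $K$ lies in one component. In the second, $K$ minus the box is still connected with more than $g^2/2$ vertices. Either way balance fails.
\end{itemize}
Hence $\tl(\Gamma_g)\ge g/16$, and this constant is absorbed into $c_A$.

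\paragraph{Alternative repair.} Take an L-shaped lattice geodesic from the midpoint of one side, through the centre, to the midpoint of an adjacent side. Choose the sides so that the diagonals do not shortcut the bend. Closing it up through the dominating corner gives an isometric cycle $C$ of length about $g$. With all edges marked it is a GLC, so Lemma~\ref{lem:berger-seymour} gives $\tl(\Gamma_g)\ge |C|/3$. This is the natural source of the factor $3$ in the statement.
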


The next two lemmas explain how to turn bounded treewidth into balanced separators.  
We first use the elementary centroid property of weighted
trees, and then apply it to the decomposition tree of a graph.

\begin{lemma}{\rm \cite[\S3, Lem.~ 1]{BielakPanczyk2012}}\label{lem:centroid}
Let $T$ be a finite tree and let $\mu\colon V(T)\to \mathbb R^{\ge 0}$ be a vertex weight function. Then there exists a vertex $x\in V(T)$ such that for every connected component $C$ of $T-x$, we have $\mu(V(C))\le \frac12 \mu(V(T))$.
\end{lemma}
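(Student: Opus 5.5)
We prove the centroid property by a directed-walk argument on $T$. Write $M \coloneqq \mu(V(T))$. If $M=0$, every vertex works trivially, so assume $M>0$.

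For each vertex $x$ and each component $C$ of $T-x$, there is a unique neighbour $y$ of $x$ lying in $C$. Call $x$ \emph{bad} if some component $C$ of $T-x$ has $\mu(V(C))>\tfrac12 M$. Since the components of $T-x$ are disjoint and their total weight is at most $M$, at most one component can be this heavy. So a bad vertex $x$ determines a unique heavy component $C_x$, and a unique neighbour $y_x\in C_x$. Set $\varphi(x)\coloneqq y_x$. We must show that some vertex is not bad.

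Suppose, for contradiction, that every vertex is bad. Then $\varphi$ is defined on all of $V(T)$, and $\varphi(x)$ is always adjacent to $x$. Since $T$ is finite, iterating $\varphi$ from any vertex eventually repeats. In a tree, a closed walk in which every step moves to an adjacent vertex must backtrack somewhere, so there is an edge $xy$ with $\varphi(x)=y$ and $\varphi(y)=x$.

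Deleting the edge $xy$ splits $T$ into two parts, $T_x\ni x$ and $T_y\ni y$. The heavy component of $T-x$ containing $y$ is exactly $T_y$, so $\mu(V(T_y))>\tfrac12 M$. Symmetrically, $\mu(V(T_x))>\tfrac12 M$. But $V(T_x)$ and $V(T_y)$ partition $V(T)$, so adding these gives $M>M$, a contradiction. Hence some vertex is not bad, and it is the required $x$.

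An alternative is a potential-function argument: choose $x$ minimising the largest component weight of $T-x$, and show that moving toward the heavy side strictly decreases this quantity. The walk argument above avoids the need for strict decrease when weights are zero. The only delicate step is justifying the existence of the backtracking edge, and this follows from acyclicity of $T$ together with the fact that $\varphi$ always moves to a neighbour.
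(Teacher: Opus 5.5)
Your proof is correct and complete. The paper gives no proof of this lemma to compare against: it imports the statement by citation from Bielak--Pa\'nczyk. Your argument therefore makes the lemma self-contained rather than offering an alternative to an in-paper proof.

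What you give is the standard ``walk towards the heavy side'' proof of the weighted centroid lemma. The mechanism is the right one: an edge $xy$ with $\varphi(x)=y$ and $\varphi(y)=x$ forces both sides $V(T_x)$ and $V(T_y)$ of the edge to carry more than half the weight. Since they partition $V(T)$, this is impossible.

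Two small simplifications are available. First, the case $M=0$ need not be treated separately, because no vertex is bad when $M=0$. Second, you can find the two-way edge without iterating $\varphi$ or using the fact that closed walks in trees backtrack. The map $x\mapsto\{x,\varphi(x)\}$ sends the $|V(T)|$ vertices into the $|V(T)|-1$ edges, so two distinct vertices $x\neq y$ share an image edge, and this immediately gives $\varphi(x)=y$ and $\varphi(y)=x$. Your backtracking justification is also valid, since a non-backtracking walk in a tree is a path and so cannot close up.
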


\begin{lemma}
\label{lem:weighted-tw-sep}
Let $H$ be a finite graph, let $(T,\beta)$ be a tree-decomposition of $H$, and let $\mu\colon V(H)\to \mathbb R^{\ge 0}$ be a weight
function. Then there exists a node $x\in V(T)$ such that, 
every connected component $K$ of $H-\beta(x)$ satisfies $\mu(K)\le \frac12\mu(V(H))$.
\end{lemma}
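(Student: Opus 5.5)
The plan is to transfer the weights from $H$ to the decomposition tree and then invoke the centroid property, Lemma~\ref{lem:centroid}. Since a vertex of $H$ may lie in many bags, we do not weight bags by their contents. Instead, for each $v\in V(H)$ we choose one node $t_v\in V(T)$ with $v\in\beta(t_v)$, which is possible by (T1). We then define $\nu(t)=\sum_{v:\,t_v=t}\mu(v)$ for $t\in V(T)$. This gives $\nu(V(T))=\mu(V(H))$. Lemma~\ref{lem:centroid} applied to $(T,\nu)$ yields a node $x$ such that every component $C$ of $T-x$ has $\nu(V(C))\le\frac12\mu(V(H))$. We claim this $x$ works.

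Next we show that each component $K$ of $H-\beta(x)$ "lives" inside a single component of $T-x$. Take $v\in V(K)$. Since $v\notin\beta(x)$, the fibre $\beta^{-1}(v)$ is a connected subtree (T3) avoiding $x$, so it lies in a unique component $C_v$ of $T-x$. If $uv$ is an edge of $K$, then by (T2) some node contains both $u$ and $v$. That node lies in $\beta^{-1}(u)\cap\beta^{-1}(v)$, so it lies in both $C_u$ and $C_v$, forcing $C_u=C_v$. Because $K$ is connected, all its vertices determine the same component $C$ of $T-x$.

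Finally, for each $v\in V(K)$ the node $t_v$ lies in $\beta^{-1}(v)\subseteq C$. Hence
\[
\mu(K)=\sum_{v\in V(K)}\mu(v)\le \sum_{t\in V(C)}\nu(t)=\nu(V(C))\le \tfrac12\mu(V(H)),
\]
using nonnegativity of $\mu$ for the first inequality. This proves the lemma.

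The only delicate point is the second step: showing a component of $H-\beta(x)$ cannot straddle two branches of $T-x$. The argument above through (T2) and (T3) handles it directly. The choice of representative nodes $t_v$, rather than weighting whole bags, is what avoids double counting.
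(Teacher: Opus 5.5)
Your proof is correct and matches the paper's argument essentially step for step. Both choose one representative node per vertex (your $t_v$, the paper's $\alpha(v)$) and push $\mu$ onto $T$, then apply the centroid lemma. Both then use (T2) and (T3) to show that all fibres of a component $K$ of $H-\beta(x)$ lie in a single component of $T-x$.
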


\begin{proof}
For each $v\in V(H)$, choose a node $\alpha(v)\in V(T)$ such that
$v\in\beta(\alpha(v))$. Define a weight function $\mu_T$ on $V(T)$ by
\[
    \mu_T(x)\coloneqq \sum_{\alpha(v)=x}\mu(v).
\]
By \Cref{lem:centroid}, there exists $x\in V(T)$ such
that every connected component $C$ of $T-x$ satisfies
\[
    \mu_T(V(C))\le \frac12\mu_T(V(T))
    =
    \frac12\mu(V(H)).
\]
Then we set  $Z\coloneqq \beta(x)$.
Let $K$ be a connected component of $H-Z$. For every $v\in V(K)$, the subtree $T_v\coloneqq \beta^{-1}(v)$  does not contain $x$, since $v\notin Z=\beta(x)$. Hence $T_v$ is contained
in a component of $T-x$.
If $u,v\in V(K)$ are adjacent in $H$, then some bag contains both $u$ and
$v$, so $T_u\cap T_v\neq\emptyset$. Therefore $T_u$ and $T_v$ lie in the
same component of $T-x$. Since $K$ is connected, all subtrees $T_v$ with
$v\in V(K)$ lie in one component $C$ of $T-x$. Thus
\[
    \mu(K)\le \mu_T(V(C))\le \frac12\mu(V(H)).\qedhere
\]
\end{proof}

Combining the preceding ingredients gives the finite weighted separator statement that will be used in the proof of the main theorem.  
Hyperbolicity gives logarithmic tree-length, the apex-minor-free assumption converts this to logarithmic treewidth, and the weighted separator lemma then produces a balanced separator of logarithmic size.

\begin{corollary}
\label{lem:finite-weighted}
    Let $A$ be a fixed apex graph. There exists a constant $b_A$ such that every finite connected $A$-minor-free $\delta$-hyperbolic graph $F$ on $N$ vertices
    has the following property: for every vertex weight function
    $\mu\colon V(F)\to\mathbb R^{\ge 0}$, there is $Z\subseteq V(F)$ such that $|Z|\le b_A(1+\delta\log_2 N)$, and every connected component $K$ of $F-Z$ satisfies 
    $$\mu(K)\le \frac{\mu(V(F))}{2}.$$
\end{corollary}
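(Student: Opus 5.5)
The argument chains three earlier results: Corollary~\ref{lem:tl-from-gromov} for tree-length, Theorem~\ref{lem:tw-tl-apex} to pass to tree-width, and Lemma~\ref{lem:weighted-tw-sep} to extract the separator. The only work is tracking constants so that they depend on $A$ alone.

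\textbf{Step 1: tree-length.} Let $F$ be a finite, connected, $A$-minor-free, $\delta$-hyperbolic graph on $N$ vertices. Corollary~\ref{lem:tl-from-gromov} gives
\[
\tl(F)\le 1+2\delta\log_2 N .
\]

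\textbf{Step 2: tree-width.} Since $F$ is connected and $A$-minor-free, Theorem~\ref{lem:tw-tl-apex} gives
\[
\tw(F)\le 3c_A\bigl(\tl(F)+1\bigr)\le 3c_A\bigl(2+2\delta\log_2 N\bigr)\le 6c_A\bigl(1+\delta\log_2 N\bigr).
\]
As $F$ is finite, the infimum defining tree-width is attained. So there is a tree-decomposition $(T,\beta)$ of $F$ in which every bag has size at most $6c_A(1+\delta\log_2 N)+1$.

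\textbf{Step 3: the separator.} Fix any weight $\mu\colon V(F)\to\mathbb R^{\ge 0}$ and apply Lemma~\ref{lem:weighted-tw-sep} to $(T,\beta)$ and $\mu$. This yields a node $x\in V(T)$ such that every component $K$ of $F-\beta(x)$ satisfies $\mu(K)\le\frac12\mu(V(F))$. Set $Z\coloneqq\beta(x)$.

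\textbf{Step 4: the constant.} It remains to absorb the additive $+1$. Since $1\le 1+\delta\log_2 N$ (using $\delta\ge0$ and $N\ge1$), we get
\[
|Z|\le (6c_A+1)\bigl(1+\delta\log_2 N\bigr).
\]
So $b_A\coloneqq 6c_A+1$ works. It depends only on $A$, because $c_A$ does.

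There is no real obstacle. The only points needing care are that the infimum defining tree-width is attained for finite graphs, and that the additive constants are absorbed without introducing any dependence on $\delta$.
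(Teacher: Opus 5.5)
Your proposal is correct and follows the paper's proof exactly: the tree-length bound from Corollary~\ref{lem:tl-from-gromov}, then the Coudert--Ducoffe--Nisse tree-width bound, then the weighted separator of Lemma~\ref{lem:weighted-tw-sep} applied to an optimal tree-decomposition. Your bookkeeping, giving $b_A = 6c_A+1$, simply makes explicit the constant that the paper leaves implicit.
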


\begin{proof}
    It follows from~\Cref{lem:tl-from-gromov}, that  $\tl(F)\le  1+2\delta\log_2 N$.
    Next we invoke \Cref{lem:tw-tl-apex}, and so we know that there exists a constant $c_A>0$,
    depending only on $A$, such that  $\tw(F)\le 3c_A\bigl(\tl(F)+1\bigr)$ which implies that
    $\tw(F)\le 3c_A (2+2\delta\log_2 N)$.
    Now~\Cref{lem:weighted-tw-sep} finishes the proof.
\end{proof}

\subsection{Deducing Theorem~\ref{thm:main-apex}}

We almost have all we need to deduce Theorem~\ref{thm:main-apex} of the introduction. 
The final hurdle is that our hull construction relies heavily on the fact that the subgraphs of our ambient graph we are considering are connected. 
The following elementary reduction shows that this is enough: if every connected finite set has a logarithmic separator, then the same is true for arbitrary finite sets, after changing the constant.
    
\begin{lemma}
\label{lem:connected-reduction}
Let $\Gamma$ be a graph.
Suppose there is a constant $C$ such that every finite connected subgraph $H\subseteq \Gamma$ satisfies $\cut(H)\le C\log_2(|V(H)|+1)$.
Then $\sep_\Gamma(n)\le C\log_2(n+1)$ for all $n\ge 1$.
\end{lemma}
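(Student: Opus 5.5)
The plan is a direct reduction to the largest connected component, using only the definition of $\cut$ and monotonicity of $\log_2$. Fix $n\ge 1$ and a subgraph $H\subseteq\Gamma$ with $|V(H)|\le n$. It suffices to show $\cut(H)\le C\log_2(|V(H)|+1)$, since $\log_2$ is increasing and so this is at most $C\log_2(n+1)$. If $V(H)$ is empty the claim is trivial, so assume $H$ is nonempty and finite, with connected components $H_1,\dots,H_k$.

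\emph{Case 1: no large component.} Suppose every component satisfies $|V(H_i)|\le \frac12|V(H)|$. Then the empty set is already a valid separator, so $\cut(H)=0$ and there is nothing to prove.

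\emph{Case 2: one large component.} Otherwise some component, say $H_1$, has $|V(H_1)|>\frac12|V(H)|$. It is the unique such component, because two such components would together contain more than $|V(H)|$ vertices. Consequently every other component $H_i$, for $i\ge 2$, has fewer than $\frac12|V(H)|$ vertices. Now $H_1$ is a finite connected subgraph of $\Gamma$, so the hypothesis gives a set $Z\subseteq V(H_1)$ with
\[
|Z|\le C\log_2(|V(H_1)|+1)\le C\log_2(|V(H)|+1),
\]
such that every component of $H_1-Z$ has at most $\frac12|V(H_1)|\le\frac12|V(H)|$ vertices.

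\emph{Checking $Z$ works for $H$.} Since $Z\subseteq V(H_1)$ and there are no edges between distinct components of $H$, the components of $H-Z$ are exactly the components of $H_1-Z$ together with $H_2,\dots,H_k$. Each of these has at most $\frac12|V(H)|$ vertices by the two bounds above, so $Z$ witnesses $\cut(H)\le |Z|\le C\log_2(|V(H)|+1)$. Taking the maximum over all such $H$ gives $\sep_\Gamma(n)\le C\log_2(n+1)$.

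There is no real obstacle in this argument. The one point needing care is that the hypothesis on $H_1$ is stated relative to $|V(H_1)|$, not $|V(H)|$. This only helps: the inequality $\frac12|V(H_1)|\le\frac12|V(H)|$ makes the balance condition for $H_1$ imply the one for $H$, and monotonicity of $\log_2$ handles the size bound on $Z$.
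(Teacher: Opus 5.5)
Your proof is correct and follows essentially the same argument as the paper. You treat the case with no component of more than half the vertices as trivial, apply the hypothesis to the unique large component, and note that all other components already have fewer than $\frac12|V(H)|$ vertices.
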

    
\begin{proof}
Fix $n \geq 1$ and let $H\subseteq \Gamma$ be a finite subgraph on  $N :=|V(H)|\le n$ vertices.
If every connected component of $H$ has at most $N/2$ vertices, then $\cut(H)=0$.

Otherwise, there is a unique connected component $H_0$ of $H$ with
$|V(H_0)|>N/2$. By hypothesis, choose $C_0\subseteq V(H_0)$ with
\[
    |C_0|\le C\log_2(|V(H_0)|+1)\le C\log_2(n+1)
\]
such that every connected component of $H_0-C_0$ has at most
$|V(H_0)|/2\le N/2$ vertices.
All other connected components of $H$ have fewer than $N/2$ vertices.
Therefore every connected component of $H-C_0$ has at most $N/2$ vertices.
Hence we have 
\[
    \cut(H)\le |C_0|\le C\log_2(n+1).
\]
Taking the maximum over all finite subgraphs $H\subseteq \Gamma$ with
$|V(H)|\le n$ gives the result.
\end{proof}

We now proceed to deduce Theorem~\ref{thm:main-apex}.
The proof applies the hull construction to a connected finite subgraph $H$, uses the finite weighted separator corollary on the resulting graph $F$, and then intersects the separator back with $H$.

\mainapex*

\begin{proof}
By~\Cref{lem:connected-reduction}, it is enough to prove the desired logarithmic bound for finite connected subgraphs of $\Gamma$.
Let $H\subseteq \Gamma$ be a finite connected subgraph, and put $n\coloneqq |V(H)|$.
The case $n=1$ is trivial, so assume $n\ge 2$.
Apply~\Cref{lem:hull} to the connected finite subgraph $H\subseteq \Gamma$.
Then there is a finite connected subgraph $F\subseteq \Gamma$ such that
\[
    H\subseteq F,
    \qquad
    |V(F)|\le M_\delta n^6,
\]
where $M_\delta\coloneqq 16\delta+3$, and such that $F$ is
$\Delta_\delta$-hyperbolic for some constant $\Delta_\delta$ depending only on
$\delta$. Since $F\subseteq \Gamma$ and $\Gamma$ is $A$-minor-free, the graph
$F$ is also $A$-minor-free.
Define a weight function $\mu\colon V(F)\to\{0,1\}$ by
\[
\mu(v)=
\begin{cases}
1, & v\in V(H),\\
0, & v\notin V(H).
\end{cases}
\]
Then we have $\mu(V(F))=|V(H)|=n$.
By~\Cref{lem:finite-weighted}, applied to the finite connected $A$-minor-free $\Delta_\delta$-hyperbolic graph $F$, there exists a set $Z\subseteq V(F)$ such that every connected component $K$ of $F-Z$ satisfies
\[
    \mu(K)\le \frac n2,
\]
and we have
\[
    |Z|
    \le
    b_A\bigl(1+\Delta_\delta\log_2 |V(F)|\bigr).
\]
Using $|V(F)|\le M_\delta n^6$, we obtain
\[
\begin{aligned}
    |Z|
    &\le
    b_A\bigl(1+\Delta_\delta\log_2(M_\delta n^6)\bigr)  \\
    &=
    b_A\bigl(1+\Delta_\delta\log_2 M_\delta
        +6\Delta_\delta\log_2 n\bigr)                  \\
    &\le
    C_1(A,\delta)\log_2(n+1),
\end{aligned}
\]
for some constant $C_1(A,\delta)$ depending only on $A$ and $\delta$.

Now we set $C_H\coloneqq Z\cap V(H)$.
Since $H\subseteq F$, the graph $H-C_H$ is a subgraph of $F-Z$. 
Hence every connected component of $H-C_H$ is contained in a connected component of $F-Z$. 
If $L$ is a connected component of $H-C_H$, and $K$ is the connected component of $F-Z$ containing $L$, then
\[
    |V(L)|=\mu(L)\le \mu(K)\le \frac n2.
\]
Therefore every connected component of $H-C_H$ has at most $n/2$ vertices.
Thus $C_H$ is a balanced separator of $H$, and so
\[
    \cut(H)\le |C_H|\le |Z|
    \le C_1(A,\delta)\log_2(n+1).
\]

Since this holds for every finite connected subgraph $H\subseteq \Gamma$,
\Cref{lem:connected-reduction} gives the following:
\[
    \sep_\Gamma(n)\le C(A,\delta)\log_2(n+1)
\]
for all $n\ge 1$, after increasing the constant if necessary.
\end{proof}

%%%%%%%%%%%%%%%%%%%%%%%%%%%%%%%%%%%%%%%%%%%%%%%%%%%%%%%%%%%%%%%%%%%%%%%%%%%%%%%%%%%%%%%%%%%%
%%%%%%%%%%%%%%%%%%%%%%%%%%%%%%%%%%%%%%%%%%%%%%%%%%%%%%%%%%%%%%%%%%%%%%%%%%%%%%%%%%%%%%%%%%%%
%%%%%%%%%%%%%%%%%%%%%%%%%%%%%%%%%%%%%%%%%%%%%%%%%%%%%%%%%%%%%%%%%%%%%%%%%%%%%%%%%%%%%%%%%%%%

\section{Separation profiles via grids}

In this section we prove Theorem~\ref{thm:main-planar} of the introduction. The argument involves studying the presence of grid minors in our subgraphs, and relating the observations made back to the tree-decompositions witnessed in the previous section.

\subsection{The grid profile}\label{sec:gridprofile}

To calculate the separation profile of our hyperbolic planar graphs, we will make use of an alternative characterisation in terms of grids. Let $Q_n$ denote the $n \times n$ square grid; that is, the grid containing $n^2$ vertices. We have the following definition. 

\begin{definition}[Grid profile]
    Let $\Gamma$ be a graph. We write
     $$
     \maxgrid(\Gamma) := \sup\{n \in \N: Q_n \prec \Gamma\}. 
     $$
    Then, define the \emph{grid profile} of $\Gamma$, denoted $\grid_\Gamma : \N \to \N$, as
    $$
    \grid_\Gamma(n) := \max\Big\{ \maxgrid(H) : \text{$H \subset \Gamma$, $|V(H)| \leq n$}\Big\}.
    $$
\end{definition}

For planar graphs, the grid profile is asymptotically equivalent to the separation profile, via the following result. 

\begin{proposition}\label{prop:grid-sep-agree}
    Let $\Gamma$ be a planar graph. Then 
    $$
    \frac 1{15} \grid_\Gamma(n) \leq \sep_\Gamma(n)  \leq 5\grid_\Gamma(n)
    $$
    for all $n \geq 1$.
\end{proposition}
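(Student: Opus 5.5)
We prove the two inequalities separately; both rest on classical facts linking grid minors, tree-width and separators for planar graphs.

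\emph{Upper bound} $\sep_\Gamma(n)\le 5\grid_\Gamma(n)$. Fix $H\subset\Gamma$ with $|V(H)|\le n$, and let $k=\maxgrid(H)\le\grid_\Gamma(n)$. Since $H$ is planar, the planar grid-minor theorem of Robertson--Seymour--Thomas, in the sharpened form of Grigoriev \cite{grigoriev2011tree}, gives $\tw(H)\le c\,k$ for an explicit small constant $c$. We then apply Lemma~\ref{lem:weighted-tw-sep} to an optimal tree-decomposition of $H$, with $\mu\equiv1$. This yields a bag $Z$ with $|Z|\le\tw(H)+1$ such that every component of $H-Z$ has at most $|V(H)|/2$ vertices. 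Hence $\cut(H)\le ck+1$. The only work is to check that the constants fit under $5k$: with Grigoriev's bound $\tw\le 4.5k$ (or a similar one), $ck+1\le 5k$ holds for $k\ge2$, and the cases $k\le1$ (forests) need a separate trivial check.

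\emph{Lower bound} $\grid_\Gamma(n)\le 15\sep_\Gamma(n)$. Take $H\subset\Gamma$ with $|V(H)|\le n$ and $Q_k\prec H$. The aim is to produce a subgraph $H'\subset H$ (so $|V(H')|\le n$) with $\cut(H')\ge k/15$. Choose a minimal subgraph $H'\subset H$ still containing $Q_k$ as a minor. It is a \emph{model} of $Q_k$: branch sets are trees joined by single edges. Such an $H'$ is a subdivision-like expansion of $Q_k$ in which every vertex has degree at most $4$ outside the branch trees. We then need a lower bound on $\cut(H')$. Here the main obstacle arises: balanced separators are measured in vertex count, and the branch trees and subdivided paths may be highly unbalanced in size, so the grid's separation number cannot be quoted directly.

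To handle this we would use the weighted viewpoint. Suppose $Z$ is a set of $s<k/15$ vertices of $H'$ leaving all components of size at most $|V(H')|/2$. Each vertex of $Z$ meets at most a bounded number of rows and columns of the model, since it lies in one branch set or one connecting edge's path. Hence at least $k-2s$ rows and $k-2s$ columns are untouched, and their union lies in a single component containing every branch set touched by them. A standard counting argument, via a weighting assigning each vertex of $H'$ to its branch set, then shows that this component carries more than half the vertices unless $s\gtrsim k/15$. If instead the mass concentrates in few branch sets, we would separate within a large branch tree, and its big tree component still attaches to the surviving cross. This bookkeeping is the delicate part, and the constant $15$ should emerge from it. Alternatively, one can invoke the Dvo\v{r}\'ak--Norin theorem \cite{dvovrak2019treewidth} relating separation number to tree-width, namely $\tw\le 15\,\mathrm{sep}$ over subgraphs. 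Combined with $\tw(Q_k)\ge k$ and minor-monotonicity, this gives $k\le\tw(H)\le 15\sep_\Gamma(n)$ directly. We would adopt this route first, as it is cleanest.
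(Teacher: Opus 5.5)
Your proposal is correct and is essentially the paper's argument. The upper bound comes from Grigoriev's planar grid theorem combined with the bag-separator of Lemma~\ref{lem:weighted-tw-sep} (i.e.\ $\cut(H)\le\tw(H)+1$), and the lower bound comes from the Dvo\v{r}\'ak--Norin bound $\tw\le 15\,\sep$ over subgraphs together with $\tw(Q_k)=k$, so your direct counting sketch can be dropped.

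The one thing to fix is the constant you quote. Grigoriev's theorem gives $\tw(H)\le 5g-6$ when $H$ has no $g\times g$ grid minor. Taking $g=k+1$ yields $\tw(H)\le 5k-1$, hence $\cut(H)\le 5k$ for every $k\ge 1$ with no separate small case needed. A bound of the form $\tw\le 5k$, by contrast, would only give $\cut(H)\le 5k+1$ and would not suffice.
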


\begin{proof}
    Denote by $\tw_\Gamma : \N \to \N$ the \emph{tree-width profile} of $\Gamma$. That is, the function
    $$
    \tw_\Gamma(n) := \max\{\tw(H) : H \subset \Gamma, |V(H)| \leq n\}.
    $$
    It was recently highlighted by Hume \cite{hume2026separation} that $\tw_\Gamma \simeq \sep_\Gamma$, a fact which follows from a theorem of Dvo{\v{r}}{\'a}k--Norin \cites{dvovrak2019treewidth,houdrouge2025separation}. More precisely, we have that 
    $$
    \sep_\Gamma(n)-1 \leq \tw_\Gamma(n) \leq 15 \sep_\Gamma(n). 
    $$
    Since $\tw(Q_n) = n$, it is easy to see that $\tw(H) \geq \maxgrid(H)$ for all graphs $H$. Conversely, applying a bound of Grigoriev \cite{grigoriev2011tree} (see also an earlier linear bound of Seymour--Thomas \cite{seymour1994call}), if $H$ is planar and $\maxgrid(H) < m$, then $\tw(H) \leq 5m-6$. In particular, we deduce that
    $$
    \grid_\Gamma(n) \leq \tw_\Gamma(n) \leq 5\grid_\Gamma(n)-1
    $$
    for all $n \geq 1$. 
    Rearranging, we conclude that
    $$
    \frac 1{15} \grid_\Gamma(n) \leq \sep_\Gamma(n)  \leq 5\grid_\Gamma(n).
    $$
    The proposition follows. 
\end{proof}

\begin{remark}
    Note that the lower bound in Proposition~\ref{prop:grid-sep-agree} holds without the planarity hypothesis, which was only used to deduce the upper bound. 
    For more general graphs, a non-linear upper bound can still be established. A theorem of Chuzhoy and Tan \cite{chuzhoy2021towards} asserts that if a graph $H$ excludes an $m \times m$ grid as a minor, then
    $$
    \tw(H) = \cO (m^9 \operatorname{polylog}(m)).
    $$
    In particular, one deduces that for an arbitrary graph we have the asymptotic bound
    $$
    \sep_\Gamma(n) = \cO (\grid_\Gamma(n)^9 \operatorname{polylog}(\grid_\Gamma(n))).
    $$
\end{remark}

\subsection{Grids and geodesic loaded cycles}

In this section we work with \emph{geodesic loaded cycles}, a tool which was introduced by Berger and Seymour in \cite{berger2024bounded} to study the diameters of bags in tree-decompositions. We will use the presence of grid minors in a planar graph to explicitly construct examples of geodesic loaded cycles.

\begin{definition}[Geodesic loaded cycle]
    Let $\Gamma$ be a graph. A \emph{loaded cycle} in $\Gamma$ is an ordered pair $(C, F)$ where:
    \begin{enumerate}
        \item $C \subset \Gamma$ is a simple cycle, and 

        \item $F \subset E(C)$ is a collection of edges of $C$, called \emph{marked edges}.  
    \end{enumerate}
    Given $u, v \in V(C)$, we define their \emph{loaded distance} in $(C,F)$ via
    $$
    \dist_{C,F}(u,v) := \min\{|E(P) \cap F| : \text{$P \subset C$  is one of the two $u$-$v$ arcs in $C$}\}.
    $$
    We say that $(C,F)$ is a \emph{geodesic loaded cycle} (\emph{GLC} for short), if 
    $$
    \dist_{C,F}(u,v) \leq \dist_\Gamma(u,v)
    $$
    for all $u,v \in V(C)$. The \emph{load} of $(C,F)$ is defined to be the quantity $|F|$. 
\end{definition}

Berger and Seymour proved the following key lemma. 

\begin{lemma}[{\cite[Lem.~2.3]{berger2024bounded}}]\label{lem:berger-seymour}
    Let $\Gamma$ be a graph and $(C,F)$ a loaded cycle, with $|F| \geq 2$. Let $(T,\beta)$ be a tree-decomposition of $\Gamma$. Then there exists $v \in V(T)$ and $u,w \in V(C) \cap \beta(v)$ such that 
    $$
    \dist_{C,F}(u,w) \geq |F|/3.
    $$
\end{lemma}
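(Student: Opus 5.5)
The plan is to argue by contradiction, via the usual ``orient every edge of the decomposition tree towards the big side'' argument. Set $k := |F| \geq 2$ and suppose that for every node $v \in V(T)$, any two vertices $u,w \in V(C) \cap \beta(v)$ satisfy $\dist_{C,F}(u,w) < k/3$.

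We think of $C$ as a circle of total loaded length $k$. A marked edge contributes length $1$ and an unmarked edge contributes length $0$, so $\dist_{C,F}$ is the induced circular (pseudo-)metric. The first step is an elementary circle lemma: a finite set $X \subseteq V(C)$ with pairwise loaded distance $< k/3$ is contained in a closed arc $I_X \subseteq C$ containing fewer than $k/3$ marked edges. To see this, take the largest gap between consecutive points of $X$ around the circle. If every gap had loaded length $< k/3$, then chaining points around the circle would produce a pair at loaded distance $\ge k/3$ in both directions. Hence the complementary open arc $J_X := C - I_X$ contains more than $2k/3$ marked edges and avoids $X$.

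Next we orient $T$. Fix an edge $st \in E(T)$, and let $T_s, T_t$ be the two components of $T - st$. The standard separation property of tree-decompositions says that $X_{st} := \beta(s) \cap \beta(t) \cap V(C)$ separates, within $C$, the vertices appearing only in bags of $T_s$ from those appearing only in bags of $T_t$. By the circle lemma, $J_{st} := J_{X_{st}}$ is a connected subpath of $C$ disjoint from $X_{st}$. Therefore all vertices of $J_{st}$ lie on the same side, and we orient $st$ towards that side. Since $T$ is a finite tree, some node $t$ is a sink, meaning every edge incident to $t$ points into $t$.

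Now let $I_t$ be the arc containing $\beta(t) \cap V(C)$, with fewer than $k/3$ marked edges. Since $k \ge 2$, its complement contains more than $2k/3 \geq 1$ marked edges, so we can pick such a marked edge $e = ab$ with $a,b \notin \beta(t)$. By (T2) and (T3), the edge $e$ is covered by a bag lying in the component $T_s$ of $T - t$ through some neighbour $s$ of $t$, and the fibres of $a,b$ lie entirely inside $T_s$. On the other hand, the edge $st$ points into $t$, so the long arc $J_{st}$, which carries more than $2k/3$ marked edges, lives on the $t$-side. Hence $e \subseteq I_{st}$.

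The remaining task, which I expect to be the main obstacle, is to turn this into a contradiction by a counting or covering argument around the sink. Every marked edge outside $I_t$ lies in one of the short arcs $I_{st}$ with $s$ a neighbour of $t$. Each such arc carries fewer than $k/3$ marked edges and has endpoints in $X_{st} \subseteq \beta(t)$, hence inside $I_t$. I would try to show that an arc $I_{st}$ with both endpoints in $I_t$ and fewer than $k/3$ marked edges must in fact be contained in $I_t$ (it cannot wrap around through the long complementary arc), contradicting $e \notin I_t$. Some care is needed with the degenerate cases where $X_{st}$ has at most one vertex, and with the bookkeeping of the choice of arc when loaded distances tie at exactly $k/3$.
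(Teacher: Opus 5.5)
The paper does not prove this lemma; it quotes it from Berger--Seymour, so your argument has to stand on its own. Your orientation-and-sink strategy is sound. As written, though, it is not yet a proof: the decisive step is only announced, and the way you set up that step has a small hole.

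\textbf{Closing the final step.} The step you call the main obstacle is immediate, and you do not need the marked edge $e$ at all.
\begin{itemize}
\item At the sink $t$, the interior vertices of $J_t$ form a path avoiding $\beta(t)$. This path is nonempty, since $J_t$ carries at least two marks. So all their fibres lie in one component of $T-t$, say the one containing the neighbour $s$.
\item If $X_{st}=\emptyset$, this already contradicts the fact that $st$ points into $t$.
\item Otherwise the interior of $J_{st}$ lies on the $t$-side. Then $J_t$ and $J_{st}$ share no edge. Both have at least two edges, so any common edge has an end $a$ interior to $J_t$. Then $a\notin\beta(t)\supseteq X_{st}$ and $a$ is on the $s$-side, so $a$ is neither an end nor an interior vertex of $J_{st}$.
\item Hence $k\ge |F\cap E(J_t)|+|F\cap E(J_{st})|>4k/3$, a contradiction.
\end{itemize}
Your own plan also closes. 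Let $p,q$ be the ends of $I_{st}$; they lie in $X_{st}\subseteq V(I_t)$. The $p$--$q$ arc of $C$ not contained in $I_t$ contains $J_t$, so it carries more than $2k/3$ marks. This forces $I_{st}\subseteq I_t$ (trivially so if $p=q$). Equivalently, the interiors of all open arcs carrying more than $2k/3$ marks pairwise touch. By the Helly property of subtrees, a single bag meets all of them, and your circle lemma applied to that bag finishes the proof.

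\textbf{Three repairs.}
\begin{enumerate}
\item In the circle lemma, the case ``every gap $<k/3$'' is not the only one to exclude. If the largest gap $g$ satisfies $k/3\le g\le 2k/3$, its two ends are at loaded distance $\min(g,k-g)\ge k/3$. Only after ruling this out do you get a unique gap exceeding $2k/3$.
\item The orientation must be defined when $X_{st}=\emptyset$: point $st$ towards the side containing all of $C$. This is what guarantees $X_t\neq\emptyset$ at the sink, so that $I_t$ exists.
\item A marked edge $e=ab$ of $J_t$ with $a,b\notin\beta(t)$ need not exist when $k=2$. For example, $J_t$ can be a path of exactly two marked edges, each with an end in $X_t$. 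Using an interior vertex of $J_t$ instead, or the counting argument above, avoids this.
\end{enumerate}
Your concern about ties at $k/3$ is unfounded, since every inequality in the argument is strict.
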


Applying Lemma~\ref{lem:berger-seymour} to geodesic loaded cycles provides an easy route to proving lower bounds for the diameters of bags in tree-decompositions.

We will now prove the following result which is the technical crux of this proof. This lemma will enable us to use grids in planar graphs to construct large GLCs.

\begin{lemma}\label{lem:grid-GLC-lemma}
    Let $n \geq 14$. Let $\Gamma$ be a planar graph and $H \subset \Gamma$ a subgraph such that
    $$
    \maxgrid(H) \geq n.
    $$
    Then $\Gamma$ contains a GLC $(C,F)$ satisfying $C \subset H$ and $|F| \geq n/4$.
\end{lemma}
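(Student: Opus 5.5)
The plan is to build $C$ as a cycle in $H$ that surrounds a large part of the grid model, and to mark edges of $C$ at the points where it meets many pairwise vertex-disjoint "spokes". Any short $\Gamma$-path between two vertices of $C$ will then be forced, by the Jordan curve theorem, to cross many of these spokes.

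\emph{Step 1: nested cycles and spokes.} Fix a planar embedding of $\Gamma$, which restricts to one of $H$. From a model of $Q_n$ in $H$ (branch sets joined by edges of $H$), extract:
\begin{itemize}
\item a cycle $C \subset H$ that follows the boundary of an $m\times m$ subgrid, with $m \approx n$, through the branch sets;
\item pairwise vertex-disjoint paths $S_1,\dots,S_k \subset H$, with $k \ge n/4$, each running from $C$ into the disc bounded by $C$, in cyclic order around $C$, and all ending on a common connected "hub" formed by the central part of the grid.
\end{itemize}
Concretely, the spokes are alternate rows and columns of the model, truncated at a central subgrid, and we use alternate ones so that consecutive spokes meet $C$ in distinct, non-adjacent places. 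I expect the hypothesis $n\ge 14$ to be exactly what makes this spacing bookkeeping work with $k\ge n/4$. Let $F$ consist of one edge of $C$ strictly between each pair of consecutive spoke endpoints. Then $|F| = k \ge n/4$, and if $\dist_{C,F}(u,v)=\ell$, each of the two $u$-$v$ arcs of $C$ has at least roughly $\ell$ spoke endpoints strictly inside it.

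\emph{Step 2: paths inside the disc.} Let $P$ be a $u$-$v$ path in $\Gamma$ with $u,v\in V(C)$ and interior inside the closed disc $D$ bounded by $C$. Then $P$ splits $D$ into two regions, each containing one arc of $C$ on its boundary. Suppose that on some side at least $\ell$ spokes avoid $P$. Those spokes lie in one region, yet they join the hub, and the spokes on the other arc do too. Hence either $P$ meets every spoke on one side, or it separates the hub from itself, which is impossible. So $P$ meets at least $\ell$ pairwise disjoint spokes, at distinct vertices, and $|E(P)|\ge \ell$ once the marked-edge count is adjusted by one (this is where the alternation, i.e.\ the spacing, is used). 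Thus $\dist_\Gamma(u,v)\ge \dist_{C,F}(u,v)$ for all geodesics staying in $D$.

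\emph{Step 3, the main obstacle: geodesics leaving the disc.} A $\Gamma$-geodesic between vertices of $C$ may run through the exterior of $D$, where $H$ gives no control. I would handle this by a minimality argument. Among all cycles $C'$ in $\Gamma$ such that
\begin{itemize}
\item $C'$ encloses the hub and meets every spoke, and
\item $C'$ coincides with $C$ near the spoke endpoints, so that $F$ still makes sense,
\end{itemize}
choose one enclosing the largest region, or using the fewest exterior vertices. If some geodesic exits, replace the corresponding sub-arc of $C'$ by that geodesic. This yields a larger admissible cycle with loaded distances not increased, contradicting extremality. After this, every violating pair is handled by Step 2.

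The difficulty is that we need $C\subset H$, while the rerouted arcs need not lie in $H$. So I would instead try to choose $C$ from the start as the outermost cycle of the grid model, and argue that an exterior geodesic between $u,v\in C$ can be pushed, via the exterior arc and the Jordan curve theorem, to yield a path that crosses spokes. If that fails, I would weaken the marking so that only spokes forced to be crossed by exterior paths count, which costs the factor that gives $n/4$ rather than $n/2$.
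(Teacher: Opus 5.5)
There is a genuine gap, and it sits exactly where you flag the main obstacle: Step~3 is never resolved, and the fallback you settle on cannot work.

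Suppose $C$ is the outermost cycle of the grid model. Take $H=Q_n$ and let $\Gamma$ be $H$ together with one new vertex in the outer face adjacent to every vertex of $C$. Then $\Gamma$ is planar, but any two vertices of $C$ are at $\Gamma$-distance at most $2$. For every loaded cycle there are $u,v\in V(C)$ with $\dist_{C,F}(u,v)=\lfloor |F|/2\rfloor$ (fix $u$ and move $v$ around $C$). So every GLC on this $C$ has load at most $5$, whatever marking you pick. Exterior paths need not meet $H$ at all, so no spoke-based re-marking helps.

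The same example shows more. If $C$ is separated from the outer face by only $k$ layers of the grid, then $\dist_\Gamma\le 2k+2$ on $C$, and the load is at most $4k+5$. So with $m\approx n$ your construction fails, and the buffer outside $C$ must have size linear in $n$. Your extremal-cycle argument fails for the reason you already noticed: the rerouted cycle leaves $H$.

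The missing idea is to sacrifice part of the grid as that buffer. The paper takes $C$ to be the boundary of the middle-third subgrid $\{\lfloor n/3\rfloor+1,\dots,\lfloor 2n/3\rfloor\}^2$, so that about $n/3$ pairwise disjoint concentric cycles of the model surround $C$.

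\begin{itemize}
\item \emph{Paths leaving the grid.} By the Jordan curve theorem, a $\Gamma$-path between vertices of $C$ that leaves the region bounded by the outer cycle of the model meets each concentric cycle twice, at distinct vertices. It is therefore longer than the $\ell_\infty$-diameter of $C$.
\item \emph{Paths staying inside.} No spokes or hub are needed. When such a path leaves a branch set $B_z$, the next branch set it enters is $B_x$ with $x$ adjacent to $z$ in the King's graph $\widehat Q_n$. Projecting gives $\dist_\Gamma(u,v)\ge\rho(u,v)$, where $\rho$ is the $\ell_\infty$ distance on grid coordinates.
\item \emph{Marking.} Marking only the $\lfloor (n-2)/3\rfloor\ge n/4$ edges on the top side of $C$ gives $\dist_{C,F}\le\rho$ by a short case check.
\end{itemize}

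Your Step~2 has a hole of the same nature. $P$ may pass through the hub, and then ``separating the hub from itself'' is no contradiction. Excluding this again needs concentric grid cycles between $C$ and the hub that force such a $P$ to be long.
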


\begin{proof}
    Fix $n \geq 14$. 
    Consider an $n \times n$ grid minor in $H$. We will realise this minor using a \emph{model}. That is, we have the following data:
    \begin{enumerate}
        \item For every $v \in V(Q_n)$, a subtree $B_v \subset H$, called a \emph{branch set}, such that all branch sets are pairwise disjoint.

        \item If $u,v \in V(Q_n)$ are adjacent, then there exists an edge $e \in E(H)$ with one endpoint in $B_v$ and the other in $B_u$. 
    \end{enumerate}
    Without loss of generality, let us assume that $V(H) = \bigcup_v B_v$, simply throwing away the pieces of $H$ which do not contribute to the grid minor.
    This yields a natural map $ \iota : V(H) \to V(Q_n)$. We may identify 
    $$
    V(Q_n) \equiv \{1, \ldots, n\} \times \{1, \ldots, n\}.
    $$
    It will be convenient to work with the square grid with the square diagonals included, i.e. the `King's graph'. We denote this graph by $\widehat Q_n$, identifying $V(Q_n)$ and $V(\widehat Q_n)$ in the obvious way. 
    Note that the path metric in $\widehat Q_n$ is precisely the $\ell_\infty$ metric $\rho$, given by
    $$
    \rho((a,b),(c,d)) = \max\{|a-c|,|b-d|\}.
    $$
    Slightly abusing notation, we pull this back to a pseudo-metric on $V(H)$, writing $\rho(x,y) := \rho(\iota(x),\iota(y))$.

    Note that the drawing of $H$ will correspond precisely to the standard drawing of the grid, in terms of the way cycles nest. Indeed, since the grid is a 3-connected planar graph (up to collapsing the degree-2 vertices at the corners), by Whitney's planar embedding theorem the drawing of $Q_n$ is unique. Contracting the edges of the branch sets recovers a drawing of the grid, and so our drawing of $H$ cannot be anything `strange'. 
    
    We now choose our cycle $C \subset H$. This will be some choice of cycle deep in the `interior' of the grid. 
    More precisely, consider the sub-grid of $Q_n$ spanned by the vertices in
    $$
    \{\lfloor n/3 \rfloor+1, \ldots, \lfloor2n/3 \rfloor\} \times \{\lfloor n/3 \rfloor+1, \ldots, \lfloor2n/3 \rfloor\}.
    $$
    Let $C'$ denote the boundary cycle of this sub-grid. 
    Note that $C'$ contains exactly $4\lfloor\tfrac{n-2}{3} \rfloor$ edges, and in particular is non-trivial whenever $n \geq 5$. 
    Pull $C'$ back to some fixed choice of simple cycle $C$ in $H$ which passes through the branch sets in the same cyclic order with which $C'$ passes through its vertices. Since the branch sets are chosen to be trees, this cycle $C$ is uniquely determined. 
    We now have the following claim, which motivates why we chose our cycle to lie `deep inside' the grid.

    \begin{claim}\label{claim:ambient-distance}
        For all $u, v \in V(C)$, we have $\dist_\Gamma(u,v) \geq \rho(u,v)$.
    \end{claim}

    \begin{proof}
        Consider a geodesic $\gamma$ in $\Gamma$ connecting $u$ and $v$. If $\gamma$ passes through the outer-most boundary layer of the grid, we automatically have that 
        $$
        \length(\gamma) > \diam_\rho(V(C)),
        $$ 
        by the Jordan curve theorem, since $\gamma$ will have to `cross' at least this many disjoint cycles (twice each) to reach the outside and then return to $C$. Thus, we may assume without loss of generality that $\gamma$ stays within the region bounded by the boundary of the grid. Under this assumption, note that if $\gamma$ leaves a branch set $B_z$, then the next branch it enters must necessarily be of the form $B_x$ where $x$ is adjacent to $z$ in $\widehat Q_n$. This also follows immediately from the Jordan curve theorem. From this, the claim follows, as $\gamma$ projects to a path in $\widehat Q_n$ between $\iota(u)$ and $\iota(v)$ of length at most $\length(\gamma)$. 
    \end{proof}

    We now choose the set of marked edges $F \subset E(C)$. 
    Consider the `top side' of square cycle $C'$ in $Q_n$ which we used earlier to define $C$. Note that every edge in $C'$ corresponds to a unique edge in $C$. Let $F'$ denote the edges on the `top side' of $C'$, and let $F$ denote precisely the edges of our model of the grid which correspond to the edges in $F'$. Note that
    $$
    |F| = \left\lfloor\frac{n-2}{3} \right\rfloor \geq \frac n 4,
    $$
    for all $n \geq 14$. 
    We now observe that the following bound holds.
    
    \begin{claim}\label{claim:loaded-distance}
        For all $u, v \in V(C)$, we have $\dist_{C,F}(u,v) \leq \rho(u,v)$.
    \end{claim}

    \begin{proof}
        It is immediate that 
        $
        \dist_{C,F}(u,v) = \dist_{C',F'}(\iota(u), \iota(v))
        $. 
        Indeed, the number of marked edges on the two $u$-$v$ paths in $C$ is equal to the number of marked edges on the two $\iota(u)$-$\iota(v)$ paths in $C'$. 
        Thus, it is sufficient to work entirely within the grid itself. For simplicity, we will abuse notation and write $\iota(u) \equiv u$ and $\iota(v) \equiv v$. 
        Note also that 
        $$
        \dist_{C',F'}(u, v) \leq \left\lfloor\frac{|F|}2\right\rfloor.
        $$
        If neither $u$ nor $v$ lie on the top side, then $\dist_{C',F'}(u,v) = 0$ and there is nothing to prove. Suppose without loss of generality that $u$ lies on the top side of $C'$.
        We have a few cases to consider. 
        Suppose that $v$ lies on the bottom side of $C'$. Then clearly $\rho(u,v) \geq |F|$, and we are done. Suppose that $v$ also lies on the top side. Then there is certainly a path in $C'$ from $v$ to $u$ which passes through $\rho(u,v)$ marked edges, and we are also done. Finally, suppose that $v$ lies on the left or right side of $C'$. Then, we again have that there is certainly a path from $u$ to $v$ which passes through at most $\rho(u,v)$ marked edges. The claim follows. 
    \end{proof}

    Combining Claims~\ref{claim:ambient-distance} and \ref{claim:loaded-distance}, it is immediate that $(C,F)$ is a GLC. The lemma follows. See Figure~\ref{fig:grid-GLC} for a cartoon of the construction.
\end{proof}

\begin{figure}
    \centering
    \input{grid-lemma}
    \caption{The GLC constructed within a planar grid minor in Lemma~\ref{lem:grid-GLC-lemma}. The vertices of this grid drawing denote arbitrary branch sets.}
    \label{fig:grid-GLC}
\end{figure}

Combining the above with Lemma~\ref{lem:berger-seymour}, we can deduce the following.

\begin{lemma}\label{lem:tree-decompositions-and-grids}
    Let $n \geq 14$. Let $\Gamma$ be a planar graph and $H \subset \Gamma$ a subgraph such that
    $
    \maxgrid(H) \geq n
    $. 
    Let $(T, \beta)$ be a tree-decomposition of $H$. Then there exists $v \in V(T)$ such that
    $$
    \diam_\Gamma(\beta(v)) \geq n/12. 
    $$
\end{lemma}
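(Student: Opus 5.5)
The plan is to combine Lemma~\ref{lem:grid-GLC-lemma} with Lemma~\ref{lem:berger-seymour} directly. Since $n \geq 14$ and $\maxgrid(H) \geq n$, Lemma~\ref{lem:grid-GLC-lemma} gives a geodesic loaded cycle $(C,F)$ in $\Gamma$ with $C \subset H$ and $|F| \geq n/4$. In particular $|F| \geq 14/4 > 2$, so the hypothesis $|F|\ge 2$ of Lemma~\ref{lem:berger-seymour} holds.

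The one point needing care is that Lemma~\ref{lem:berger-seymour} concerns a tree-decomposition of the graph containing the loaded cycle. Our $(T,\beta)$ is a tree-decomposition of $H$, not of $\Gamma$. I will therefore apply Lemma~\ref{lem:berger-seymour} with $H$ as the ambient graph, viewing $(C,F)$ as a loaded cycle in $H$. This is legitimate because $C \subset H$ and the lemma only requires a loaded cycle; it does not require the cycle to be geodesic.

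The lemma then yields a node $v \in V(T)$ and vertices $u,w \in V(C)\cap \beta(v)$ with $\dist_{C,F}(u,w) \geq |F|/3 \geq n/12$. Here the loaded distance depends only on $(C,F)$, not on the ambient graph.

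Finally, I will use the geodesic property with respect to $\Gamma$, which Lemma~\ref{lem:grid-GLC-lemma} provides via Claims~\ref{claim:ambient-distance} and \ref{claim:loaded-distance}. This gives
\[
\diam_\Gamma(\beta(v)) \geq \dist_\Gamma(u,w) \geq \dist_{C,F}(u,w) \geq n/12 ,
\]
which is the claimed bound. The only real obstacle is the mismatch just described between the graph carrying the tree-decomposition ($H$) and the graph in which the cycle is geodesic ($\Gamma$). It is resolved by keeping the two roles separate: Berger--Seymour is applied inside $H$, and geodesicity is applied in $\Gamma$.
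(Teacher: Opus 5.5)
Your proposal is correct and follows the paper's proof exactly. You take the GLC from Lemma~\ref{lem:grid-GLC-lemma}, apply Lemma~\ref{lem:berger-seymour} to get $u,w$ in a common bag with $\dist_{C,F}(u,w)\ge |F|/3\ge n/12$, and then use geodesicity in $\Gamma$ to get the diameter bound. Your explicit checks that $|F|\ge 2$ and that Berger--Seymour is applied with $H$ (not $\Gamma$) as the host graph make precise two points the paper leaves implicit.
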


\begin{proof}
    By Lemma~\ref{lem:grid-GLC-lemma}, there exists a GLC $(C,F)$ such that $C \subset H$ and $|F| \geq n/4$. By Lemma~\ref{lem:berger-seymour}, there exists $v \in V(T)$ and $u,w \in V(C) \cap \beta(v)$ such that 
    $$
    n/12\leq |F|/3 \leq \dist_{C,F}(u,w) \leq \dist_\Gamma(u,w).
    $$
    Thus, we are done.
\end{proof}

\subsection{Deducing Theorem~\ref{thm:main-planar}}

We now have all we need to conclude the proof of Theorem~\ref{thm:main-planar}.

\mainplanar*

\begin{proof}
    Let $\Gamma$ be a $\delta$-hyperbolic planar graph. Let us assume without loss of generality that $\Gamma$ is connected. Fix $n \in \N$, and let $H \subset \Gamma$ be some subgraph satisfying $|V(H)| \leq n$. We will bound $\maxgrid(H)$. Write $m = \maxgrid(H)$. Assume without loss of generality that $m \geq 14$, since the desired statement is trivial if $m < 14$. 
    By Proposition~\ref{prop:tree-decomp-from-tree-approx}, there exists a tree-decomposition $(T,\beta)$ of $H$ such that
    $$
    \diam_\Gamma(\beta(v)) \leq 1 + 2\delta \log_2(|V(H)|),
    $$
    for all $v \in V(T)$.
    However, by Lemma~\ref{lem:tree-decompositions-and-grids}, we have that there exists $v \in V(T)$ such that 
    $
    \diam_\Gamma(\beta(v)) \geq m/12
    $. 
    It follows that 
    $
    m \leq  12 + 24\delta \log_2(|V(H)|)
    $. 
    In particular, we deduce that 
    $$
    \grid_\Gamma(n) \leq  12 + 24\delta \log_2(n).
    $$
    Applying Proposition~\ref{prop:grid-sep-agree}, we deduce that
    $$
    \sep_\Gamma(n) \leq   60 + 120\delta \log_2(n).
    $$
    Thus, we are done.
\end{proof}

\bibliographystyle{abbrv}
\bibliography{references}

\end{document}

%% file: commands.tex
\DeclareMathOperator{\diam}{diam}
\DeclareMathOperator{\length}{length}
\DeclareMathOperator{\cut}{cut}

\DeclareMathOperator{\dist}{d}

\DeclareMathOperator{\width}{width}

\DeclareMathOperator{\sep}{sep}
\DeclareMathOperator{\grid}{grid}
\DeclareMathOperator{\maxgrid}{maxgrid}
\DeclareMathOperator{\tw}{tw}
\DeclareMathOperator{\tl}{tl}

\newcommand{\R}{\mathbb{R}}

\newcommand{\N}{\mathbb{N}}

\newcommand{\cO}{\mathcal{O}}

\makeatletter
\newcommand{\myitem}[1]{%
\item[#1]\protected@edef\@currentlabel{#1}%
}
\makeatother

\newtheorem{theorem}{Theorem}[section]% theorem counter resets every \subsection
\newtheorem{alphtheorem}{Theorem}

\newtheorem{alphcor}[alphtheorem]{Corollary}
\newtheorem*{theorem*}{Theorem}

\newtheorem{proposition}[theorem]{Proposition}
\newtheorem{lemma}[theorem]{Lemma}
\newtheorem{corollary}[theorem]{Corollary}
\newtheorem{claim}[theorem]{Claim}

\theoremstyle{definition}
\newtheorem{definition}[theorem]{Definition}

\newtheorem{remark}[theorem]{Remark}

%% file: grid-lemma.tex
\tikzset{every picture/.style={line width=0.75pt}} %set default line width to 0.75pt        

\begin{tikzpicture}[x=0.75pt,y=0.75pt,yscale=-1,xscale=1]
%uncomment if require: \path (0,353); %set diagram left start at 0, and has height of 353

%Shape: Grid [id:dp8376667742170463] 
\draw  [draw opacity=0] (171.4,30.2) -- (471.4,30.2) -- (471.4,330.2) -- (171.4,330.2) -- cycle ; \draw  [color={rgb, 255:red, 155; green, 155; blue, 155 }  ,draw opacity=1 ] (201.4,30.2) -- (201.4,330.2)(231.4,30.2) -- (231.4,330.2)(261.4,30.2) -- (261.4,330.2)(291.4,30.2) -- (291.4,330.2)(321.4,30.2) -- (321.4,330.2)(351.4,30.2) -- (351.4,330.2)(381.4,30.2) -- (381.4,330.2)(411.4,30.2) -- (411.4,330.2)(441.4,30.2) -- (441.4,330.2) ; \draw  [color={rgb, 255:red, 155; green, 155; blue, 155 }  ,draw opacity=1 ] (171.4,60.2) -- (471.4,60.2)(171.4,90.2) -- (471.4,90.2)(171.4,120.2) -- (471.4,120.2)(171.4,150.2) -- (471.4,150.2)(171.4,180.2) -- (471.4,180.2)(171.4,210.2) -- (471.4,210.2)(171.4,240.2) -- (471.4,240.2)(171.4,270.2) -- (471.4,270.2)(171.4,300.2) -- (471.4,300.2) ; \draw  [color={rgb, 255:red, 155; green, 155; blue, 155 }  ,draw opacity=1 ] (171.4,30.2) -- (471.4,30.2) -- (471.4,330.2) -- (171.4,330.2) -- cycle ;
%Shape: Rectangle [id:dp06911119666036347] 
\draw  [color={rgb, 255:red, 208; green, 2; blue, 27 }  ,draw opacity=1 ][line width=1.5]  (261.4,120.2) -- (381.4,120.2) -- (381.4,240.2) -- (261.4,240.2) -- cycle ;
%Straight Lines [id:da9576956760140773] 
\draw [color={rgb, 255:red, 74; green, 144; blue, 226 }  ,draw opacity=1 ][fill={rgb, 255:red, 231; green, 231; blue, 231 }  ,fill opacity=1 ][line width=2.25]    (261.4,120.2) -- (381.4,120.2) ;
%Shape: Rectangle [id:dp1619187867755335] 
\draw  [draw opacity=0][fill={rgb, 255:red, 255; green, 255; blue, 255 }  ,fill opacity=1 ] (154.28,23.65) -- (175.07,23.65) -- (175.07,330.4) -- (154.28,330.4) -- cycle ;
%Shape: Rectangle [id:dp5051835251687256] 
\draw  [draw opacity=0][fill={rgb, 255:red, 255; green, 255; blue, 255 }  ,fill opacity=1 ] (163.58,19.78) -- (476.27,19.78) -- (476.27,39.6) -- (163.58,39.6) -- cycle ;
%Shape: Circle [id:dp5643127006768552] 
\draw  [color={rgb, 255:red, 155; green, 155; blue, 155 }  ,draw opacity=1 ][fill={rgb, 255:red, 231; green, 231; blue, 231 }  ,fill opacity=1 ] (194.07,60.2) .. controls (194.07,56.15) and (197.35,52.87) .. (201.4,52.87) .. controls (205.45,52.87) and (208.73,56.15) .. (208.73,60.2) .. controls (208.73,64.25) and (205.45,67.53) .. (201.4,67.53) .. controls (197.35,67.53) and (194.07,64.25) .. (194.07,60.2) -- cycle ;
%Shape: Circle [id:dp010185337930883054] 
\draw  [color={rgb, 255:red, 155; green, 155; blue, 155 }  ,draw opacity=1 ][fill={rgb, 255:red, 231; green, 231; blue, 231 }  ,fill opacity=1 ] (224.07,60.2) .. controls (224.07,56.15) and (227.35,52.87) .. (231.4,52.87) .. controls (235.45,52.87) and (238.73,56.15) .. (238.73,60.2) .. controls (238.73,64.25) and (235.45,67.53) .. (231.4,67.53) .. controls (227.35,67.53) and (224.07,64.25) .. (224.07,60.2) -- cycle ;
%Shape: Circle [id:dp972525844869225] 
\draw  [color={rgb, 255:red, 155; green, 155; blue, 155 }  ,draw opacity=1 ][fill={rgb, 255:red, 231; green, 231; blue, 231 }  ,fill opacity=1 ] (254.07,60.2) .. controls (254.07,56.15) and (257.35,52.87) .. (261.4,52.87) .. controls (265.45,52.87) and (268.73,56.15) .. (268.73,60.2) .. controls (268.73,64.25) and (265.45,67.53) .. (261.4,67.53) .. controls (257.35,67.53) and (254.07,64.25) .. (254.07,60.2) -- cycle ;
%Shape: Circle [id:dp07640230537286208] 
\draw  [color={rgb, 255:red, 155; green, 155; blue, 155 }  ,draw opacity=1 ][fill={rgb, 255:red, 231; green, 231; blue, 231 }  ,fill opacity=1 ] (284.07,60.2) .. controls (284.07,56.15) and (287.35,52.87) .. (291.4,52.87) .. controls (295.45,52.87) and (298.73,56.15) .. (298.73,60.2) .. controls (298.73,64.25) and (295.45,67.53) .. (291.4,67.53) .. controls (287.35,67.53) and (284.07,64.25) .. (284.07,60.2) -- cycle ;
%Shape: Circle [id:dp8107436489873223] 
\draw  [color={rgb, 255:red, 155; green, 155; blue, 155 }  ,draw opacity=1 ][fill={rgb, 255:red, 231; green, 231; blue, 231 }  ,fill opacity=1 ] (314.07,60.2) .. controls (314.07,56.15) and (317.35,52.87) .. (321.4,52.87) .. controls (325.45,52.87) and (328.73,56.15) .. (328.73,60.2) .. controls (328.73,64.25) and (325.45,67.53) .. (321.4,67.53) .. controls (317.35,67.53) and (314.07,64.25) .. (314.07,60.2) -- cycle ;
%Shape: Circle [id:dp4173479155386365] 
\draw  [color={rgb, 255:red, 155; green, 155; blue, 155 }  ,draw opacity=1 ][fill={rgb, 255:red, 231; green, 231; blue, 231 }  ,fill opacity=1 ] (344.07,60.2) .. controls (344.07,56.15) and (347.35,52.87) .. (351.4,52.87) .. controls (355.45,52.87) and (358.73,56.15) .. (358.73,60.2) .. controls (358.73,64.25) and (355.45,67.53) .. (351.4,67.53) .. controls (347.35,67.53) and (344.07,64.25) .. (344.07,60.2) -- cycle ;
%Shape: Circle [id:dp27470261337127] 
\draw  [color={rgb, 255:red, 155; green, 155; blue, 155 }  ,draw opacity=1 ][fill={rgb, 255:red, 231; green, 231; blue, 231 }  ,fill opacity=1 ] (374.07,60.2) .. controls (374.07,56.15) and (377.35,52.87) .. (381.4,52.87) .. controls (385.45,52.87) and (388.73,56.15) .. (388.73,60.2) .. controls (388.73,64.25) and (385.45,67.53) .. (381.4,67.53) .. controls (377.35,67.53) and (374.07,64.25) .. (374.07,60.2) -- cycle ;
%Shape: Circle [id:dp2130840824351531] 
\draw  [color={rgb, 255:red, 155; green, 155; blue, 155 }  ,draw opacity=1 ][fill={rgb, 255:red, 231; green, 231; blue, 231 }  ,fill opacity=1 ] (404.07,60.2) .. controls (404.07,56.15) and (407.35,52.87) .. (411.4,52.87) .. controls (415.45,52.87) and (418.73,56.15) .. (418.73,60.2) .. controls (418.73,64.25) and (415.45,67.53) .. (411.4,67.53) .. controls (407.35,67.53) and (404.07,64.25) .. (404.07,60.2) -- cycle ;
%Shape: Circle [id:dp3999609585756765] 
\draw  [color={rgb, 255:red, 155; green, 155; blue, 155 }  ,draw opacity=1 ][fill={rgb, 255:red, 231; green, 231; blue, 231 }  ,fill opacity=1 ] (434.07,60.2) .. controls (434.07,56.15) and (437.35,52.87) .. (441.4,52.87) .. controls (445.45,52.87) and (448.73,56.15) .. (448.73,60.2) .. controls (448.73,64.25) and (445.45,67.53) .. (441.4,67.53) .. controls (437.35,67.53) and (434.07,64.25) .. (434.07,60.2) -- cycle ;
%Shape: Circle [id:dp8192037853621529] 
\draw  [color={rgb, 255:red, 155; green, 155; blue, 155 }  ,draw opacity=1 ][fill={rgb, 255:red, 231; green, 231; blue, 231 }  ,fill opacity=1 ] (194.07,90.2) .. controls (194.07,86.15) and (197.35,82.87) .. (201.4,82.87) .. controls (205.45,82.87) and (208.73,86.15) .. (208.73,90.2) .. controls (208.73,94.25) and (205.45,97.53) .. (201.4,97.53) .. controls (197.35,97.53) and (194.07,94.25) .. (194.07,90.2) -- cycle ;
%Shape: Circle [id:dp7654874546769155] 
\draw  [color={rgb, 255:red, 155; green, 155; blue, 155 }  ,draw opacity=1 ][fill={rgb, 255:red, 231; green, 231; blue, 231 }  ,fill opacity=1 ] (224.07,90.2) .. controls (224.07,86.15) and (227.35,82.87) .. (231.4,82.87) .. controls (235.45,82.87) and (238.73,86.15) .. (238.73,90.2) .. controls (238.73,94.25) and (235.45,97.53) .. (231.4,97.53) .. controls (227.35,97.53) and (224.07,94.25) .. (224.07,90.2) -- cycle ;
%Shape: Circle [id:dp6250768649229528] 
\draw  [color={rgb, 255:red, 155; green, 155; blue, 155 }  ,draw opacity=1 ][fill={rgb, 255:red, 231; green, 231; blue, 231 }  ,fill opacity=1 ] (254.07,90.2) .. controls (254.07,86.15) and (257.35,82.87) .. (261.4,82.87) .. controls (265.45,82.87) and (268.73,86.15) .. (268.73,90.2) .. controls (268.73,94.25) and (265.45,97.53) .. (261.4,97.53) .. controls (257.35,97.53) and (254.07,94.25) .. (254.07,90.2) -- cycle ;
%Shape: Circle [id:dp666951768654127] 
\draw  [color={rgb, 255:red, 155; green, 155; blue, 155 }  ,draw opacity=1 ][fill={rgb, 255:red, 231; green, 231; blue, 231 }  ,fill opacity=1 ] (284.07,90.2) .. controls (284.07,86.15) and (287.35,82.87) .. (291.4,82.87) .. controls (295.45,82.87) and (298.73,86.15) .. (298.73,90.2) .. controls (298.73,94.25) and (295.45,97.53) .. (291.4,97.53) .. controls (287.35,97.53) and (284.07,94.25) .. (284.07,90.2) -- cycle ;
%Shape: Circle [id:dp11171985701996257] 
\draw  [color={rgb, 255:red, 155; green, 155; blue, 155 }  ,draw opacity=1 ][fill={rgb, 255:red, 231; green, 231; blue, 231 }  ,fill opacity=1 ] (314.07,90.2) .. controls (314.07,86.15) and (317.35,82.87) .. (321.4,82.87) .. controls (325.45,82.87) and (328.73,86.15) .. (328.73,90.2) .. controls (328.73,94.25) and (325.45,97.53) .. (321.4,97.53) .. controls (317.35,97.53) and (314.07,94.25) .. (314.07,90.2) -- cycle ;
%Shape: Circle [id:dp2568682357205009] 
\draw  [color={rgb, 255:red, 155; green, 155; blue, 155 }  ,draw opacity=1 ][fill={rgb, 255:red, 231; green, 231; blue, 231 }  ,fill opacity=1 ] (344.07,90.2) .. controls (344.07,86.15) and (347.35,82.87) .. (351.4,82.87) .. controls (355.45,82.87) and (358.73,86.15) .. (358.73,90.2) .. controls (358.73,94.25) and (355.45,97.53) .. (351.4,97.53) .. controls (347.35,97.53) and (344.07,94.25) .. (344.07,90.2) -- cycle ;
%Shape: Circle [id:dp7224937620248015] 
\draw  [color={rgb, 255:red, 155; green, 155; blue, 155 }  ,draw opacity=1 ][fill={rgb, 255:red, 231; green, 231; blue, 231 }  ,fill opacity=1 ] (374.07,90.2) .. controls (374.07,86.15) and (377.35,82.87) .. (381.4,82.87) .. controls (385.45,82.87) and (388.73,86.15) .. (388.73,90.2) .. controls (388.73,94.25) and (385.45,97.53) .. (381.4,97.53) .. controls (377.35,97.53) and (374.07,94.25) .. (374.07,90.2) -- cycle ;
%Shape: Circle [id:dp9232631596240866] 
\draw  [color={rgb, 255:red, 155; green, 155; blue, 155 }  ,draw opacity=1 ][fill={rgb, 255:red, 231; green, 231; blue, 231 }  ,fill opacity=1 ] (404.07,90.2) .. controls (404.07,86.15) and (407.35,82.87) .. (411.4,82.87) .. controls (415.45,82.87) and (418.73,86.15) .. (418.73,90.2) .. controls (418.73,94.25) and (415.45,97.53) .. (411.4,97.53) .. controls (407.35,97.53) and (404.07,94.25) .. (404.07,90.2) -- cycle ;
%Shape: Circle [id:dp7041665914575933] 
\draw  [color={rgb, 255:red, 155; green, 155; blue, 155 }  ,draw opacity=1 ][fill={rgb, 255:red, 231; green, 231; blue, 231 }  ,fill opacity=1 ] (434.07,90.2) .. controls (434.07,86.15) and (437.35,82.87) .. (441.4,82.87) .. controls (445.45,82.87) and (448.73,86.15) .. (448.73,90.2) .. controls (448.73,94.25) and (445.45,97.53) .. (441.4,97.53) .. controls (437.35,97.53) and (434.07,94.25) .. (434.07,90.2) -- cycle ;
%Shape: Circle [id:dp04219930729794352] 
\draw  [color={rgb, 255:red, 155; green, 155; blue, 155 }  ,draw opacity=1 ][fill={rgb, 255:red, 231; green, 231; blue, 231 }  ,fill opacity=1 ] (194.07,120.2) .. controls (194.07,116.15) and (197.35,112.87) .. (201.4,112.87) .. controls (205.45,112.87) and (208.73,116.15) .. (208.73,120.2) .. controls (208.73,124.25) and (205.45,127.53) .. (201.4,127.53) .. controls (197.35,127.53) and (194.07,124.25) .. (194.07,120.2) -- cycle ;
%Shape: Circle [id:dp3592302740155783] 
\draw  [color={rgb, 255:red, 155; green, 155; blue, 155 }  ,draw opacity=1 ][fill={rgb, 255:red, 231; green, 231; blue, 231 }  ,fill opacity=1 ] (224.07,120.2) .. controls (224.07,116.15) and (227.35,112.87) .. (231.4,112.87) .. controls (235.45,112.87) and (238.73,116.15) .. (238.73,120.2) .. controls (238.73,124.25) and (235.45,127.53) .. (231.4,127.53) .. controls (227.35,127.53) and (224.07,124.25) .. (224.07,120.2) -- cycle ;
%Shape: Circle [id:dp25825361117974077] 
\draw  [color={rgb, 255:red, 155; green, 155; blue, 155 }  ,draw opacity=1 ][fill={rgb, 255:red, 231; green, 231; blue, 231 }  ,fill opacity=1 ] (254.07,120.2) .. controls (254.07,116.15) and (257.35,112.87) .. (261.4,112.87) .. controls (265.45,112.87) and (268.73,116.15) .. (268.73,120.2) .. controls (268.73,124.25) and (265.45,127.53) .. (261.4,127.53) .. controls (257.35,127.53) and (254.07,124.25) .. (254.07,120.2) -- cycle ;
%Shape: Circle [id:dp13334635085919444] 
\draw  [color={rgb, 255:red, 155; green, 155; blue, 155 }  ,draw opacity=1 ][fill={rgb, 255:red, 231; green, 231; blue, 231 }  ,fill opacity=1 ] (284.07,120.2) .. controls (284.07,116.15) and (287.35,112.87) .. (291.4,112.87) .. controls (295.45,112.87) and (298.73,116.15) .. (298.73,120.2) .. controls (298.73,124.25) and (295.45,127.53) .. (291.4,127.53) .. controls (287.35,127.53) and (284.07,124.25) .. (284.07,120.2) -- cycle ;
%Shape: Circle [id:dp537738931664517] 
\draw  [color={rgb, 255:red, 155; green, 155; blue, 155 }  ,draw opacity=1 ][fill={rgb, 255:red, 231; green, 231; blue, 231 }  ,fill opacity=1 ] (314.07,120.2) .. controls (314.07,116.15) and (317.35,112.87) .. (321.4,112.87) .. controls (325.45,112.87) and (328.73,116.15) .. (328.73,120.2) .. controls (328.73,124.25) and (325.45,127.53) .. (321.4,127.53) .. controls (317.35,127.53) and (314.07,124.25) .. (314.07,120.2) -- cycle ;
%Shape: Circle [id:dp5143734011969568] 
\draw  [color={rgb, 255:red, 155; green, 155; blue, 155 }  ,draw opacity=1 ][fill={rgb, 255:red, 231; green, 231; blue, 231 }  ,fill opacity=1 ] (344.07,120.2) .. controls (344.07,116.15) and (347.35,112.87) .. (351.4,112.87) .. controls (355.45,112.87) and (358.73,116.15) .. (358.73,120.2) .. controls (358.73,124.25) and (355.45,127.53) .. (351.4,127.53) .. controls (347.35,127.53) and (344.07,124.25) .. (344.07,120.2) -- cycle ;
%Shape: Circle [id:dp822688717644282] 
\draw  [color={rgb, 255:red, 155; green, 155; blue, 155 }  ,draw opacity=1 ][fill={rgb, 255:red, 231; green, 231; blue, 231 }  ,fill opacity=1 ] (374.07,120.2) .. controls (374.07,116.15) and (377.35,112.87) .. (381.4,112.87) .. controls (385.45,112.87) and (388.73,116.15) .. (388.73,120.2) .. controls (388.73,124.25) and (385.45,127.53) .. (381.4,127.53) .. controls (377.35,127.53) and (374.07,124.25) .. (374.07,120.2) -- cycle ;
%Shape: Circle [id:dp8189418923728563] 
\draw  [color={rgb, 255:red, 155; green, 155; blue, 155 }  ,draw opacity=1 ][fill={rgb, 255:red, 231; green, 231; blue, 231 }  ,fill opacity=1 ] (404.07,120.2) .. controls (404.07,116.15) and (407.35,112.87) .. (411.4,112.87) .. controls (415.45,112.87) and (418.73,116.15) .. (418.73,120.2) .. controls (418.73,124.25) and (415.45,127.53) .. (411.4,127.53) .. controls (407.35,127.53) and (404.07,124.25) .. (404.07,120.2) -- cycle ;
%Shape: Circle [id:dp6267312303115923] 
\draw  [color={rgb, 255:red, 155; green, 155; blue, 155 }  ,draw opacity=1 ][fill={rgb, 255:red, 231; green, 231; blue, 231 }  ,fill opacity=1 ] (434.07,120.2) .. controls (434.07,116.15) and (437.35,112.87) .. (441.4,112.87) .. controls (445.45,112.87) and (448.73,116.15) .. (448.73,120.2) .. controls (448.73,124.25) and (445.45,127.53) .. (441.4,127.53) .. controls (437.35,127.53) and (434.07,124.25) .. (434.07,120.2) -- cycle ;
%Shape: Circle [id:dp6824766844321517] 
\draw  [color={rgb, 255:red, 155; green, 155; blue, 155 }  ,draw opacity=1 ][fill={rgb, 255:red, 231; green, 231; blue, 231 }  ,fill opacity=1 ] (194.07,150.2) .. controls (194.07,146.15) and (197.35,142.87) .. (201.4,142.87) .. controls (205.45,142.87) and (208.73,146.15) .. (208.73,150.2) .. controls (208.73,154.25) and (205.45,157.53) .. (201.4,157.53) .. controls (197.35,157.53) and (194.07,154.25) .. (194.07,150.2) -- cycle ;
%Shape: Circle [id:dp2731028550436686] 
\draw  [color={rgb, 255:red, 155; green, 155; blue, 155 }  ,draw opacity=1 ][fill={rgb, 255:red, 231; green, 231; blue, 231 }  ,fill opacity=1 ] (224.07,150.2) .. controls (224.07,146.15) and (227.35,142.87) .. (231.4,142.87) .. controls (235.45,142.87) and (238.73,146.15) .. (238.73,150.2) .. controls (238.73,154.25) and (235.45,157.53) .. (231.4,157.53) .. controls (227.35,157.53) and (224.07,154.25) .. (224.07,150.2) -- cycle ;
%Shape: Circle [id:dp9606001532114778] 
\draw  [color={rgb, 255:red, 155; green, 155; blue, 155 }  ,draw opacity=1 ][fill={rgb, 255:red, 231; green, 231; blue, 231 }  ,fill opacity=1 ] (254.07,150.2) .. controls (254.07,146.15) and (257.35,142.87) .. (261.4,142.87) .. controls (265.45,142.87) and (268.73,146.15) .. (268.73,150.2) .. controls (268.73,154.25) and (265.45,157.53) .. (261.4,157.53) .. controls (257.35,157.53) and (254.07,154.25) .. (254.07,150.2) -- cycle ;
%Shape: Circle [id:dp8618346225480805] 
\draw  [color={rgb, 255:red, 155; green, 155; blue, 155 }  ,draw opacity=1 ][fill={rgb, 255:red, 231; green, 231; blue, 231 }  ,fill opacity=1 ] (284.07,150.2) .. controls (284.07,146.15) and (287.35,142.87) .. (291.4,142.87) .. controls (295.45,142.87) and (298.73,146.15) .. (298.73,150.2) .. controls (298.73,154.25) and (295.45,157.53) .. (291.4,157.53) .. controls (287.35,157.53) and (284.07,154.25) .. (284.07,150.2) -- cycle ;
%Shape: Circle [id:dp029926673017842687] 
\draw  [color={rgb, 255:red, 155; green, 155; blue, 155 }  ,draw opacity=1 ][fill={rgb, 255:red, 231; green, 231; blue, 231 }  ,fill opacity=1 ] (314.07,150.2) .. controls (314.07,146.15) and (317.35,142.87) .. (321.4,142.87) .. controls (325.45,142.87) and (328.73,146.15) .. (328.73,150.2) .. controls (328.73,154.25) and (325.45,157.53) .. (321.4,157.53) .. controls (317.35,157.53) and (314.07,154.25) .. (314.07,150.2) -- cycle ;
%Shape: Circle [id:dp7386466778393096] 
\draw  [color={rgb, 255:red, 155; green, 155; blue, 155 }  ,draw opacity=1 ][fill={rgb, 255:red, 231; green, 231; blue, 231 }  ,fill opacity=1 ] (344.07,150.2) .. controls (344.07,146.15) and (347.35,142.87) .. (351.4,142.87) .. controls (355.45,142.87) and (358.73,146.15) .. (358.73,150.2) .. controls (358.73,154.25) and (355.45,157.53) .. (351.4,157.53) .. controls (347.35,157.53) and (344.07,154.25) .. (344.07,150.2) -- cycle ;
%Shape: Circle [id:dp5222881810221425] 
\draw  [color={rgb, 255:red, 155; green, 155; blue, 155 }  ,draw opacity=1 ][fill={rgb, 255:red, 231; green, 231; blue, 231 }  ,fill opacity=1 ] (374.07,150.2) .. controls (374.07,146.15) and (377.35,142.87) .. (381.4,142.87) .. controls (385.45,142.87) and (388.73,146.15) .. (388.73,150.2) .. controls (388.73,154.25) and (385.45,157.53) .. (381.4,157.53) .. controls (377.35,157.53) and (374.07,154.25) .. (374.07,150.2) -- cycle ;
%Shape: Circle [id:dp8653979249978141] 
\draw  [color={rgb, 255:red, 155; green, 155; blue, 155 }  ,draw opacity=1 ][fill={rgb, 255:red, 231; green, 231; blue, 231 }  ,fill opacity=1 ] (404.07,150.2) .. controls (404.07,146.15) and (407.35,142.87) .. (411.4,142.87) .. controls (415.45,142.87) and (418.73,146.15) .. (418.73,150.2) .. controls (418.73,154.25) and (415.45,157.53) .. (411.4,157.53) .. controls (407.35,157.53) and (404.07,154.25) .. (404.07,150.2) -- cycle ;
%Shape: Circle [id:dp8134895080777859] 
\draw  [color={rgb, 255:red, 155; green, 155; blue, 155 }  ,draw opacity=1 ][fill={rgb, 255:red, 231; green, 231; blue, 231 }  ,fill opacity=1 ] (434.07,150.2) .. controls (434.07,146.15) and (437.35,142.87) .. (441.4,142.87) .. controls (445.45,142.87) and (448.73,146.15) .. (448.73,150.2) .. controls (448.73,154.25) and (445.45,157.53) .. (441.4,157.53) .. controls (437.35,157.53) and (434.07,154.25) .. (434.07,150.2) -- cycle ;
%Shape: Circle [id:dp9657692097643676] 
\draw  [color={rgb, 255:red, 155; green, 155; blue, 155 }  ,draw opacity=1 ][fill={rgb, 255:red, 231; green, 231; blue, 231 }  ,fill opacity=1 ] (194.07,180.2) .. controls (194.07,176.15) and (197.35,172.87) .. (201.4,172.87) .. controls (205.45,172.87) and (208.73,176.15) .. (208.73,180.2) .. controls (208.73,184.25) and (205.45,187.53) .. (201.4,187.53) .. controls (197.35,187.53) and (194.07,184.25) .. (194.07,180.2) -- cycle ;
%Shape: Circle [id:dp6936613412359617] 
\draw  [color={rgb, 255:red, 155; green, 155; blue, 155 }  ,draw opacity=1 ][fill={rgb, 255:red, 231; green, 231; blue, 231 }  ,fill opacity=1 ] (224.07,180.2) .. controls (224.07,176.15) and (227.35,172.87) .. (231.4,172.87) .. controls (235.45,172.87) and (238.73,176.15) .. (238.73,180.2) .. controls (238.73,184.25) and (235.45,187.53) .. (231.4,187.53) .. controls (227.35,187.53) and (224.07,184.25) .. (224.07,180.2) -- cycle ;
%Shape: Circle [id:dp47760906812193527] 
\draw  [color={rgb, 255:red, 155; green, 155; blue, 155 }  ,draw opacity=1 ][fill={rgb, 255:red, 231; green, 231; blue, 231 }  ,fill opacity=1 ] (254.07,180.2) .. controls (254.07,176.15) and (257.35,172.87) .. (261.4,172.87) .. controls (265.45,172.87) and (268.73,176.15) .. (268.73,180.2) .. controls (268.73,184.25) and (265.45,187.53) .. (261.4,187.53) .. controls (257.35,187.53) and (254.07,184.25) .. (254.07,180.2) -- cycle ;
%Shape: Circle [id:dp9346300500705962] 
\draw  [color={rgb, 255:red, 155; green, 155; blue, 155 }  ,draw opacity=1 ][fill={rgb, 255:red, 231; green, 231; blue, 231 }  ,fill opacity=1 ] (284.07,180.2) .. controls (284.07,176.15) and (287.35,172.87) .. (291.4,172.87) .. controls (295.45,172.87) and (298.73,176.15) .. (298.73,180.2) .. controls (298.73,184.25) and (295.45,187.53) .. (291.4,187.53) .. controls (287.35,187.53) and (284.07,184.25) .. (284.07,180.2) -- cycle ;
%Shape: Circle [id:dp9842861818223646] 
\draw  [color={rgb, 255:red, 155; green, 155; blue, 155 }  ,draw opacity=1 ][fill={rgb, 255:red, 231; green, 231; blue, 231 }  ,fill opacity=1 ] (314.07,180.2) .. controls (314.07,176.15) and (317.35,172.87) .. (321.4,172.87) .. controls (325.45,172.87) and (328.73,176.15) .. (328.73,180.2) .. controls (328.73,184.25) and (325.45,187.53) .. (321.4,187.53) .. controls (317.35,187.53) and (314.07,184.25) .. (314.07,180.2) -- cycle ;
%Shape: Circle [id:dp40520051516427336] 
\draw  [color={rgb, 255:red, 155; green, 155; blue, 155 }  ,draw opacity=1 ][fill={rgb, 255:red, 231; green, 231; blue, 231 }  ,fill opacity=1 ] (344.07,180.2) .. controls (344.07,176.15) and (347.35,172.87) .. (351.4,172.87) .. controls (355.45,172.87) and (358.73,176.15) .. (358.73,180.2) .. controls (358.73,184.25) and (355.45,187.53) .. (351.4,187.53) .. controls (347.35,187.53) and (344.07,184.25) .. (344.07,180.2) -- cycle ;
%Shape: Circle [id:dp15528006293370944] 
\draw  [color={rgb, 255:red, 155; green, 155; blue, 155 }  ,draw opacity=1 ][fill={rgb, 255:red, 231; green, 231; blue, 231 }  ,fill opacity=1 ] (374.07,180.2) .. controls (374.07,176.15) and (377.35,172.87) .. (381.4,172.87) .. controls (385.45,172.87) and (388.73,176.15) .. (388.73,180.2) .. controls (388.73,184.25) and (385.45,187.53) .. (381.4,187.53) .. controls (377.35,187.53) and (374.07,184.25) .. (374.07,180.2) -- cycle ;
%Shape: Circle [id:dp9699506872372539] 
\draw  [color={rgb, 255:red, 155; green, 155; blue, 155 }  ,draw opacity=1 ][fill={rgb, 255:red, 231; green, 231; blue, 231 }  ,fill opacity=1 ] (404.07,180.2) .. controls (404.07,176.15) and (407.35,172.87) .. (411.4,172.87) .. controls (415.45,172.87) and (418.73,176.15) .. (418.73,180.2) .. controls (418.73,184.25) and (415.45,187.53) .. (411.4,187.53) .. controls (407.35,187.53) and (404.07,184.25) .. (404.07,180.2) -- cycle ;
%Shape: Circle [id:dp46818299950096787] 
\draw  [color={rgb, 255:red, 155; green, 155; blue, 155 }  ,draw opacity=1 ][fill={rgb, 255:red, 231; green, 231; blue, 231 }  ,fill opacity=1 ] (434.07,180.2) .. controls (434.07,176.15) and (437.35,172.87) .. (441.4,172.87) .. controls (445.45,172.87) and (448.73,176.15) .. (448.73,180.2) .. controls (448.73,184.25) and (445.45,187.53) .. (441.4,187.53) .. controls (437.35,187.53) and (434.07,184.25) .. (434.07,180.2) -- cycle ;
%Shape: Circle [id:dp6709414886844423] 
\draw  [color={rgb, 255:red, 155; green, 155; blue, 155 }  ,draw opacity=1 ][fill={rgb, 255:red, 231; green, 231; blue, 231 }  ,fill opacity=1 ] (194.07,210.2) .. controls (194.07,206.15) and (197.35,202.87) .. (201.4,202.87) .. controls (205.45,202.87) and (208.73,206.15) .. (208.73,210.2) .. controls (208.73,214.25) and (205.45,217.53) .. (201.4,217.53) .. controls (197.35,217.53) and (194.07,214.25) .. (194.07,210.2) -- cycle ;
%Shape: Circle [id:dp6242940184204586] 
\draw  [color={rgb, 255:red, 155; green, 155; blue, 155 }  ,draw opacity=1 ][fill={rgb, 255:red, 231; green, 231; blue, 231 }  ,fill opacity=1 ] (224.07,210.2) .. controls (224.07,206.15) and (227.35,202.87) .. (231.4,202.87) .. controls (235.45,202.87) and (238.73,206.15) .. (238.73,210.2) .. controls (238.73,214.25) and (235.45,217.53) .. (231.4,217.53) .. controls (227.35,217.53) and (224.07,214.25) .. (224.07,210.2) -- cycle ;
%Shape: Circle [id:dp4467106467269726] 
\draw  [color={rgb, 255:red, 155; green, 155; blue, 155 }  ,draw opacity=1 ][fill={rgb, 255:red, 231; green, 231; blue, 231 }  ,fill opacity=1 ] (254.07,210.2) .. controls (254.07,206.15) and (257.35,202.87) .. (261.4,202.87) .. controls (265.45,202.87) and (268.73,206.15) .. (268.73,210.2) .. controls (268.73,214.25) and (265.45,217.53) .. (261.4,217.53) .. controls (257.35,217.53) and (254.07,214.25) .. (254.07,210.2) -- cycle ;
%Shape: Circle [id:dp24809046693738834] 
\draw  [color={rgb, 255:red, 155; green, 155; blue, 155 }  ,draw opacity=1 ][fill={rgb, 255:red, 231; green, 231; blue, 231 }  ,fill opacity=1 ] (284.07,210.2) .. controls (284.07,206.15) and (287.35,202.87) .. (291.4,202.87) .. controls (295.45,202.87) and (298.73,206.15) .. (298.73,210.2) .. controls (298.73,214.25) and (295.45,217.53) .. (291.4,217.53) .. controls (287.35,217.53) and (284.07,214.25) .. (284.07,210.2) -- cycle ;
%Shape: Circle [id:dp697801542242781] 
\draw  [color={rgb, 255:red, 155; green, 155; blue, 155 }  ,draw opacity=1 ][fill={rgb, 255:red, 231; green, 231; blue, 231 }  ,fill opacity=1 ] (314.07,210.2) .. controls (314.07,206.15) and (317.35,202.87) .. (321.4,202.87) .. controls (325.45,202.87) and (328.73,206.15) .. (328.73,210.2) .. controls (328.73,214.25) and (325.45,217.53) .. (321.4,217.53) .. controls (317.35,217.53) and (314.07,214.25) .. (314.07,210.2) -- cycle ;
%Shape: Circle [id:dp08972826025308156] 
\draw  [color={rgb, 255:red, 155; green, 155; blue, 155 }  ,draw opacity=1 ][fill={rgb, 255:red, 231; green, 231; blue, 231 }  ,fill opacity=1 ] (344.07,210.2) .. controls (344.07,206.15) and (347.35,202.87) .. (351.4,202.87) .. controls (355.45,202.87) and (358.73,206.15) .. (358.73,210.2) .. controls (358.73,214.25) and (355.45,217.53) .. (351.4,217.53) .. controls (347.35,217.53) and (344.07,214.25) .. (344.07,210.2) -- cycle ;
%Shape: Circle [id:dp9183831842625523] 
\draw  [color={rgb, 255:red, 155; green, 155; blue, 155 }  ,draw opacity=1 ][fill={rgb, 255:red, 231; green, 231; blue, 231 }  ,fill opacity=1 ] (374.07,210.2) .. controls (374.07,206.15) and (377.35,202.87) .. (381.4,202.87) .. controls (385.45,202.87) and (388.73,206.15) .. (388.73,210.2) .. controls (388.73,214.25) and (385.45,217.53) .. (381.4,217.53) .. controls (377.35,217.53) and (374.07,214.25) .. (374.07,210.2) -- cycle ;
%Shape: Circle [id:dp321332394203376] 
\draw  [color={rgb, 255:red, 155; green, 155; blue, 155 }  ,draw opacity=1 ][fill={rgb, 255:red, 231; green, 231; blue, 231 }  ,fill opacity=1 ] (404.07,210.2) .. controls (404.07,206.15) and (407.35,202.87) .. (411.4,202.87) .. controls (415.45,202.87) and (418.73,206.15) .. (418.73,210.2) .. controls (418.73,214.25) and (415.45,217.53) .. (411.4,217.53) .. controls (407.35,217.53) and (404.07,214.25) .. (404.07,210.2) -- cycle ;
%Shape: Circle [id:dp04370261916181273] 
\draw  [color={rgb, 255:red, 155; green, 155; blue, 155 }  ,draw opacity=1 ][fill={rgb, 255:red, 231; green, 231; blue, 231 }  ,fill opacity=1 ] (434.07,210.2) .. controls (434.07,206.15) and (437.35,202.87) .. (441.4,202.87) .. controls (445.45,202.87) and (448.73,206.15) .. (448.73,210.2) .. controls (448.73,214.25) and (445.45,217.53) .. (441.4,217.53) .. controls (437.35,217.53) and (434.07,214.25) .. (434.07,210.2) -- cycle ;
%Shape: Circle [id:dp060242624451909865] 
\draw  [color={rgb, 255:red, 155; green, 155; blue, 155 }  ,draw opacity=1 ][fill={rgb, 255:red, 231; green, 231; blue, 231 }  ,fill opacity=1 ] (194.07,240.2) .. controls (194.07,236.15) and (197.35,232.87) .. (201.4,232.87) .. controls (205.45,232.87) and (208.73,236.15) .. (208.73,240.2) .. controls (208.73,244.25) and (205.45,247.53) .. (201.4,247.53) .. controls (197.35,247.53) and (194.07,244.25) .. (194.07,240.2) -- cycle ;
%Shape: Circle [id:dp12306124666671914] 
\draw  [color={rgb, 255:red, 155; green, 155; blue, 155 }  ,draw opacity=1 ][fill={rgb, 255:red, 231; green, 231; blue, 231 }  ,fill opacity=1 ] (224.07,240.2) .. controls (224.07,236.15) and (227.35,232.87) .. (231.4,232.87) .. controls (235.45,232.87) and (238.73,236.15) .. (238.73,240.2) .. controls (238.73,244.25) and (235.45,247.53) .. (231.4,247.53) .. controls (227.35,247.53) and (224.07,244.25) .. (224.07,240.2) -- cycle ;
%Shape: Circle [id:dp03758061185734107] 
\draw  [color={rgb, 255:red, 155; green, 155; blue, 155 }  ,draw opacity=1 ][fill={rgb, 255:red, 231; green, 231; blue, 231 }  ,fill opacity=1 ] (254.07,240.2) .. controls (254.07,236.15) and (257.35,232.87) .. (261.4,232.87) .. controls (265.45,232.87) and (268.73,236.15) .. (268.73,240.2) .. controls (268.73,244.25) and (265.45,247.53) .. (261.4,247.53) .. controls (257.35,247.53) and (254.07,244.25) .. (254.07,240.2) -- cycle ;
%Shape: Circle [id:dp693594846548127] 
\draw  [color={rgb, 255:red, 155; green, 155; blue, 155 }  ,draw opacity=1 ][fill={rgb, 255:red, 231; green, 231; blue, 231 }  ,fill opacity=1 ] (284.07,240.2) .. controls (284.07,236.15) and (287.35,232.87) .. (291.4,232.87) .. controls (295.45,232.87) and (298.73,236.15) .. (298.73,240.2) .. controls (298.73,244.25) and (295.45,247.53) .. (291.4,247.53) .. controls (287.35,247.53) and (284.07,244.25) .. (284.07,240.2) -- cycle ;
%Shape: Circle [id:dp73781766692443] 
\draw  [color={rgb, 255:red, 155; green, 155; blue, 155 }  ,draw opacity=1 ][fill={rgb, 255:red, 231; green, 231; blue, 231 }  ,fill opacity=1 ] (314.07,240.2) .. controls (314.07,236.15) and (317.35,232.87) .. (321.4,232.87) .. controls (325.45,232.87) and (328.73,236.15) .. (328.73,240.2) .. controls (328.73,244.25) and (325.45,247.53) .. (321.4,247.53) .. controls (317.35,247.53) and (314.07,244.25) .. (314.07,240.2) -- cycle ;
%Shape: Circle [id:dp5668702750483593] 
\draw  [color={rgb, 255:red, 155; green, 155; blue, 155 }  ,draw opacity=1 ][fill={rgb, 255:red, 231; green, 231; blue, 231 }  ,fill opacity=1 ] (344.07,240.2) .. controls (344.07,236.15) and (347.35,232.87) .. (351.4,232.87) .. controls (355.45,232.87) and (358.73,236.15) .. (358.73,240.2) .. controls (358.73,244.25) and (355.45,247.53) .. (351.4,247.53) .. controls (347.35,247.53) and (344.07,244.25) .. (344.07,240.2) -- cycle ;
%Shape: Circle [id:dp8414427019524234] 
\draw  [color={rgb, 255:red, 155; green, 155; blue, 155 }  ,draw opacity=1 ][fill={rgb, 255:red, 231; green, 231; blue, 231 }  ,fill opacity=1 ] (374.07,240.2) .. controls (374.07,236.15) and (377.35,232.87) .. (381.4,232.87) .. controls (385.45,232.87) and (388.73,236.15) .. (388.73,240.2) .. controls (388.73,244.25) and (385.45,247.53) .. (381.4,247.53) .. controls (377.35,247.53) and (374.07,244.25) .. (374.07,240.2) -- cycle ;
%Shape: Circle [id:dp8108835843202838] 
\draw  [color={rgb, 255:red, 155; green, 155; blue, 155 }  ,draw opacity=1 ][fill={rgb, 255:red, 231; green, 231; blue, 231 }  ,fill opacity=1 ] (404.07,240.2) .. controls (404.07,236.15) and (407.35,232.87) .. (411.4,232.87) .. controls (415.45,232.87) and (418.73,236.15) .. (418.73,240.2) .. controls (418.73,244.25) and (415.45,247.53) .. (411.4,247.53) .. controls (407.35,247.53) and (404.07,244.25) .. (404.07,240.2) -- cycle ;
%Shape: Circle [id:dp682855564812166] 
\draw  [color={rgb, 255:red, 155; green, 155; blue, 155 }  ,draw opacity=1 ][fill={rgb, 255:red, 231; green, 231; blue, 231 }  ,fill opacity=1 ] (434.07,240.2) .. controls (434.07,236.15) and (437.35,232.87) .. (441.4,232.87) .. controls (445.45,232.87) and (448.73,236.15) .. (448.73,240.2) .. controls (448.73,244.25) and (445.45,247.53) .. (441.4,247.53) .. controls (437.35,247.53) and (434.07,244.25) .. (434.07,240.2) -- cycle ;
%Shape: Circle [id:dp25062490868621856] 
\draw  [color={rgb, 255:red, 155; green, 155; blue, 155 }  ,draw opacity=1 ][fill={rgb, 255:red, 231; green, 231; blue, 231 }  ,fill opacity=1 ] (192.07,271) .. controls (192.07,266.95) and (195.35,263.67) .. (199.4,263.67) .. controls (203.45,263.67) and (206.73,266.95) .. (206.73,271) .. controls (206.73,275.05) and (203.45,278.33) .. (199.4,278.33) .. controls (195.35,278.33) and (192.07,275.05) .. (192.07,271) -- cycle ;
%Shape: Circle [id:dp20246999079700545] 
\draw  [color={rgb, 255:red, 155; green, 155; blue, 155 }  ,draw opacity=1 ][fill={rgb, 255:red, 231; green, 231; blue, 231 }  ,fill opacity=1 ] (224.07,270.2) .. controls (224.07,266.15) and (227.35,262.87) .. (231.4,262.87) .. controls (235.45,262.87) and (238.73,266.15) .. (238.73,270.2) .. controls (238.73,274.25) and (235.45,277.53) .. (231.4,277.53) .. controls (227.35,277.53) and (224.07,274.25) .. (224.07,270.2) -- cycle ;
%Shape: Circle [id:dp23633316864332743] 
\draw  [color={rgb, 255:red, 155; green, 155; blue, 155 }  ,draw opacity=1 ][fill={rgb, 255:red, 231; green, 231; blue, 231 }  ,fill opacity=1 ] (254.07,270.2) .. controls (254.07,266.15) and (257.35,262.87) .. (261.4,262.87) .. controls (265.45,262.87) and (268.73,266.15) .. (268.73,270.2) .. controls (268.73,274.25) and (265.45,277.53) .. (261.4,277.53) .. controls (257.35,277.53) and (254.07,274.25) .. (254.07,270.2) -- cycle ;
%Shape: Circle [id:dp06493234264063241] 
\draw  [color={rgb, 255:red, 155; green, 155; blue, 155 }  ,draw opacity=1 ][fill={rgb, 255:red, 231; green, 231; blue, 231 }  ,fill opacity=1 ] (284.07,270.2) .. controls (284.07,266.15) and (287.35,262.87) .. (291.4,262.87) .. controls (295.45,262.87) and (298.73,266.15) .. (298.73,270.2) .. controls (298.73,274.25) and (295.45,277.53) .. (291.4,277.53) .. controls (287.35,277.53) and (284.07,274.25) .. (284.07,270.2) -- cycle ;
%Shape: Circle [id:dp34796899314894036] 
\draw  [color={rgb, 255:red, 155; green, 155; blue, 155 }  ,draw opacity=1 ][fill={rgb, 255:red, 231; green, 231; blue, 231 }  ,fill opacity=1 ] (314.07,270.2) .. controls (314.07,266.15) and (317.35,262.87) .. (321.4,262.87) .. controls (325.45,262.87) and (328.73,266.15) .. (328.73,270.2) .. controls (328.73,274.25) and (325.45,277.53) .. (321.4,277.53) .. controls (317.35,277.53) and (314.07,274.25) .. (314.07,270.2) -- cycle ;
%Shape: Circle [id:dp807785873143843] 
\draw  [color={rgb, 255:red, 155; green, 155; blue, 155 }  ,draw opacity=1 ][fill={rgb, 255:red, 231; green, 231; blue, 231 }  ,fill opacity=1 ] (344.07,270.2) .. controls (344.07,266.15) and (347.35,262.87) .. (351.4,262.87) .. controls (355.45,262.87) and (358.73,266.15) .. (358.73,270.2) .. controls (358.73,274.25) and (355.45,277.53) .. (351.4,277.53) .. controls (347.35,277.53) and (344.07,274.25) .. (344.07,270.2) -- cycle ;
%Shape: Circle [id:dp20099546247483413] 
\draw  [color={rgb, 255:red, 155; green, 155; blue, 155 }  ,draw opacity=1 ][fill={rgb, 255:red, 231; green, 231; blue, 231 }  ,fill opacity=1 ] (374.07,270.2) .. controls (374.07,266.15) and (377.35,262.87) .. (381.4,262.87) .. controls (385.45,262.87) and (388.73,266.15) .. (388.73,270.2) .. controls (388.73,274.25) and (385.45,277.53) .. (381.4,277.53) .. controls (377.35,277.53) and (374.07,274.25) .. (374.07,270.2) -- cycle ;
%Shape: Circle [id:dp3548869966465408] 
\draw  [color={rgb, 255:red, 155; green, 155; blue, 155 }  ,draw opacity=1 ][fill={rgb, 255:red, 231; green, 231; blue, 231 }  ,fill opacity=1 ] (404.07,270.2) .. controls (404.07,266.15) and (407.35,262.87) .. (411.4,262.87) .. controls (415.45,262.87) and (418.73,266.15) .. (418.73,270.2) .. controls (418.73,274.25) and (415.45,277.53) .. (411.4,277.53) .. controls (407.35,277.53) and (404.07,274.25) .. (404.07,270.2) -- cycle ;
%Shape: Circle [id:dp567690302702572] 
\draw  [color={rgb, 255:red, 155; green, 155; blue, 155 }  ,draw opacity=1 ][fill={rgb, 255:red, 231; green, 231; blue, 231 }  ,fill opacity=1 ] (434.07,270.2) .. controls (434.07,266.15) and (437.35,262.87) .. (441.4,262.87) .. controls (445.45,262.87) and (448.73,266.15) .. (448.73,270.2) .. controls (448.73,274.25) and (445.45,277.53) .. (441.4,277.53) .. controls (437.35,277.53) and (434.07,274.25) .. (434.07,270.2) -- cycle ;
%Shape: Circle [id:dp12732811593503401] 
\draw  [color={rgb, 255:red, 155; green, 155; blue, 155 }  ,draw opacity=1 ][fill={rgb, 255:red, 231; green, 231; blue, 231 }  ,fill opacity=1 ] (194.07,300.2) .. controls (194.07,296.15) and (197.35,292.87) .. (201.4,292.87) .. controls (205.45,292.87) and (208.73,296.15) .. (208.73,300.2) .. controls (208.73,304.25) and (205.45,307.53) .. (201.4,307.53) .. controls (197.35,307.53) and (194.07,304.25) .. (194.07,300.2) -- cycle ;
%Shape: Circle [id:dp06442323630812019] 
\draw  [color={rgb, 255:red, 155; green, 155; blue, 155 }  ,draw opacity=1 ][fill={rgb, 255:red, 231; green, 231; blue, 231 }  ,fill opacity=1 ] (224.07,300.2) .. controls (224.07,296.15) and (227.35,292.87) .. (231.4,292.87) .. controls (235.45,292.87) and (238.73,296.15) .. (238.73,300.2) .. controls (238.73,304.25) and (235.45,307.53) .. (231.4,307.53) .. controls (227.35,307.53) and (224.07,304.25) .. (224.07,300.2) -- cycle ;
%Shape: Circle [id:dp7119105126081211] 
\draw  [color={rgb, 255:red, 155; green, 155; blue, 155 }  ,draw opacity=1 ][fill={rgb, 255:red, 231; green, 231; blue, 231 }  ,fill opacity=1 ] (254.07,300.2) .. controls (254.07,296.15) and (257.35,292.87) .. (261.4,292.87) .. controls (265.45,292.87) and (268.73,296.15) .. (268.73,300.2) .. controls (268.73,304.25) and (265.45,307.53) .. (261.4,307.53) .. controls (257.35,307.53) and (254.07,304.25) .. (254.07,300.2) -- cycle ;
%Shape: Circle [id:dp20581141722989704] 
\draw  [color={rgb, 255:red, 155; green, 155; blue, 155 }  ,draw opacity=1 ][fill={rgb, 255:red, 231; green, 231; blue, 231 }  ,fill opacity=1 ] (284.07,300.2) .. controls (284.07,296.15) and (287.35,292.87) .. (291.4,292.87) .. controls (295.45,292.87) and (298.73,296.15) .. (298.73,300.2) .. controls (298.73,304.25) and (295.45,307.53) .. (291.4,307.53) .. controls (287.35,307.53) and (284.07,304.25) .. (284.07,300.2) -- cycle ;
%Shape: Circle [id:dp969237151294416] 
\draw  [color={rgb, 255:red, 155; green, 155; blue, 155 }  ,draw opacity=1 ][fill={rgb, 255:red, 231; green, 231; blue, 231 }  ,fill opacity=1 ] (314.07,300.2) .. controls (314.07,296.15) and (317.35,292.87) .. (321.4,292.87) .. controls (325.45,292.87) and (328.73,296.15) .. (328.73,300.2) .. controls (328.73,304.25) and (325.45,307.53) .. (321.4,307.53) .. controls (317.35,307.53) and (314.07,304.25) .. (314.07,300.2) -- cycle ;
%Shape: Circle [id:dp760313876343911] 
\draw  [color={rgb, 255:red, 155; green, 155; blue, 155 }  ,draw opacity=1 ][fill={rgb, 255:red, 231; green, 231; blue, 231 }  ,fill opacity=1 ] (344.07,300.2) .. controls (344.07,296.15) and (347.35,292.87) .. (351.4,292.87) .. controls (355.45,292.87) and (358.73,296.15) .. (358.73,300.2) .. controls (358.73,304.25) and (355.45,307.53) .. (351.4,307.53) .. controls (347.35,307.53) and (344.07,304.25) .. (344.07,300.2) -- cycle ;
%Shape: Circle [id:dp9581177608426413] 
\draw  [color={rgb, 255:red, 155; green, 155; blue, 155 }  ,draw opacity=1 ][fill={rgb, 255:red, 231; green, 231; blue, 231 }  ,fill opacity=1 ] (374.07,300.2) .. controls (374.07,296.15) and (377.35,292.87) .. (381.4,292.87) .. controls (385.45,292.87) and (388.73,296.15) .. (388.73,300.2) .. controls (388.73,304.25) and (385.45,307.53) .. (381.4,307.53) .. controls (377.35,307.53) and (374.07,304.25) .. (374.07,300.2) -- cycle ;
%Shape: Circle [id:dp21180451770740527] 
\draw  [color={rgb, 255:red, 155; green, 155; blue, 155 }  ,draw opacity=1 ][fill={rgb, 255:red, 231; green, 231; blue, 231 }  ,fill opacity=1 ] (404.07,300.2) .. controls (404.07,296.15) and (407.35,292.87) .. (411.4,292.87) .. controls (415.45,292.87) and (418.73,296.15) .. (418.73,300.2) .. controls (418.73,304.25) and (415.45,307.53) .. (411.4,307.53) .. controls (407.35,307.53) and (404.07,304.25) .. (404.07,300.2) -- cycle ;
%Shape: Circle [id:dp43546901896558876] 
\draw  [color={rgb, 255:red, 155; green, 155; blue, 155 }  ,draw opacity=1 ][fill={rgb, 255:red, 231; green, 231; blue, 231 }  ,fill opacity=1 ] (434.07,300.2) .. controls (434.07,296.15) and (437.35,292.87) .. (441.4,292.87) .. controls (445.45,292.87) and (448.73,296.15) .. (448.73,300.2) .. controls (448.73,304.25) and (445.45,307.53) .. (441.4,307.53) .. controls (437.35,307.53) and (434.07,304.25) .. (434.07,300.2) -- cycle ;
%Curve Lines [id:da7819431068959589] 
\draw [color={rgb, 255:red, 208; green, 2; blue, 27 }  ,draw opacity=1 ][line width=1.5]    (261.4,127.53) .. controls (261.92,122.08) and (262.92,119.58) .. (268.73,120.2) ;
%Curve Lines [id:da14518232347112014] 
\draw [color={rgb, 255:red, 208; green, 2; blue, 27 }  ,draw opacity=1 ][line width=1.5]    (261.4,157.53) .. controls (261.92,152.08) and (263.92,148.58) .. (261.4,142.87) ;
%Curve Lines [id:da2149990137201755] 
\draw [color={rgb, 255:red, 208; green, 2; blue, 27 }  ,draw opacity=1 ][line width=1.5]    (381.4,187.53) .. controls (381.92,182.08) and (383.92,178.58) .. (381.4,172.87) ;
%Curve Lines [id:da8311173202249925] 
\draw [color={rgb, 255:red, 208; green, 2; blue, 27 }  ,draw opacity=1 ][line width=1.5]    (261.4,217.53) .. controls (261.92,212.08) and (259.42,208.58) .. (261.4,202.87) ;
%Curve Lines [id:da888624265616709] 
\draw [color={rgb, 255:red, 208; green, 2; blue, 27 }  ,draw opacity=1 ][line width=1.5]    (261.4,187.53) .. controls (264.42,180.58) and (258.67,180.08) .. (261.4,172.87) ;
%Curve Lines [id:da13864723159095116] 
\draw [color={rgb, 255:red, 208; green, 2; blue, 27 }  ,draw opacity=1 ][line width=1.5]    (381.4,157.53) .. controls (384.42,150.58) and (378.67,150.08) .. (381.4,142.87) ;
%Curve Lines [id:da7183553712285776] 
\draw [color={rgb, 255:red, 208; green, 2; blue, 27 }  ,draw opacity=1 ][line width=1.5]    (381.4,217.53) .. controls (384.42,210.58) and (378.67,210.08) .. (381.4,202.87) ;
%Curve Lines [id:da895095056122264] 
\draw [color={rgb, 255:red, 208; green, 2; blue, 27 }  ,draw opacity=1 ][line width=1.5]    (358.73,240.2) .. controls (351.17,236.83) and (350.17,242.08) .. (344.07,240.2) ;
%Curve Lines [id:da39517284574035705] 
\draw [color={rgb, 255:red, 208; green, 2; blue, 27 }  ,draw opacity=1 ][line width=1.5]    (328.73,240.2) .. controls (321.92,242.08) and (320.17,242.08) .. (314.07,240.2) ;
%Curve Lines [id:da26361668803997673] 
\draw [color={rgb, 255:red, 208; green, 2; blue, 27 }  ,draw opacity=1 ][line width=1.5]    (298.73,240.2) .. controls (292.67,242.08) and (290.67,237.58) .. (284.07,240.2) ;
%Curve Lines [id:da30134516662482624] 
\draw [color={rgb, 255:red, 208; green, 2; blue, 27 }  ,draw opacity=1 ][line width=1.5]    (298.73,120.2) .. controls (292.67,122.08) and (290.67,117.58) .. (284.07,120.2) ;
%Curve Lines [id:da055185362686639094] 
\draw [color={rgb, 255:red, 208; green, 2; blue, 27 }  ,draw opacity=1 ][line width=1.5]    (328.73,120.2) .. controls (322.67,122.08) and (320.17,121.58) .. (314.07,120.2) ;
%Curve Lines [id:da26943159893852] 
\draw [color={rgb, 255:red, 208; green, 2; blue, 27 }  ,draw opacity=1 ][line width=1.5]    (358.73,120.2) .. controls (353.42,117.83) and (350.17,121.58) .. (344.07,120.2) ;
%Curve Lines [id:da5520214877895043] 
\draw [color={rgb, 255:red, 208; green, 2; blue, 27 }  ,draw opacity=1 ][line width=1.5]    (381.4,127.53) .. controls (384.42,119.58) and (380.17,121.58) .. (374.07,120.2) ;
%Curve Lines [id:da870079504747281] 
\draw [color={rgb, 255:red, 208; green, 2; blue, 27 }  ,draw opacity=1 ][line width=1.5]    (374.07,240.2) .. controls (381.92,242.58) and (381.92,238.08) .. (381.4,232.87) ;
%Curve Lines [id:da8935444861229214] 
\draw [color={rgb, 255:red, 208; green, 2; blue, 27 }  ,draw opacity=1 ][line width=1.5]    (268.73,240.2) .. controls (255.42,243.08) and (261.92,238.08) .. (261.4,232.87) ;
%Straight Lines [id:da3929450608652061] 
\draw [color={rgb, 255:red, 74; green, 144; blue, 226 }  ,draw opacity=1 ][fill={rgb, 255:red, 231; green, 231; blue, 231 }  ,fill opacity=1 ][line width=1.5]    (268.73,120.2) ;
\draw [shift={(268.73,120.2)}, rotate = 0] [color={rgb, 255:red, 74; green, 144; blue, 226 }  ,draw opacity=1 ][fill={rgb, 255:red, 74; green, 144; blue, 226 }  ,fill opacity=1 ][line width=1.5]      (0, 0) circle [x radius= 1.74, y radius= 1.74]   ;
%Straight Lines [id:da40825322091771743] 
\draw [color={rgb, 255:red, 74; green, 144; blue, 226 }  ,draw opacity=1 ][fill={rgb, 255:red, 231; green, 231; blue, 231 }  ,fill opacity=1 ][line width=1.5]    (284.07,120.2) ;
\draw [shift={(284.07,120.2)}, rotate = 0] [color={rgb, 255:red, 74; green, 144; blue, 226 }  ,draw opacity=1 ][fill={rgb, 255:red, 74; green, 144; blue, 226 }  ,fill opacity=1 ][line width=1.5]      (0, 0) circle [x radius= 1.74, y radius= 1.74]   ;
%Straight Lines [id:da024910703439943394] 
\draw [color={rgb, 255:red, 74; green, 144; blue, 226 }  ,draw opacity=1 ][fill={rgb, 255:red, 231; green, 231; blue, 231 }  ,fill opacity=1 ][line width=1.5]    (298.73,120.2) ;
\draw [shift={(298.73,120.2)}, rotate = 0] [color={rgb, 255:red, 74; green, 144; blue, 226 }  ,draw opacity=1 ][fill={rgb, 255:red, 74; green, 144; blue, 226 }  ,fill opacity=1 ][line width=1.5]      (0, 0) circle [x radius= 1.74, y radius= 1.74]   ;
%Straight Lines [id:da717924901746671] 
\draw [color={rgb, 255:red, 74; green, 144; blue, 226 }  ,draw opacity=1 ][fill={rgb, 255:red, 231; green, 231; blue, 231 }  ,fill opacity=1 ][line width=1.5]    (314.07,120.2) ;
\draw [shift={(314.07,120.2)}, rotate = 0] [color={rgb, 255:red, 74; green, 144; blue, 226 }  ,draw opacity=1 ][fill={rgb, 255:red, 74; green, 144; blue, 226 }  ,fill opacity=1 ][line width=1.5]      (0, 0) circle [x radius= 1.74, y radius= 1.74]   ;
%Straight Lines [id:da7488304664878976] 
\draw [color={rgb, 255:red, 74; green, 144; blue, 226 }  ,draw opacity=1 ][fill={rgb, 255:red, 231; green, 231; blue, 231 }  ,fill opacity=1 ][line width=1.5]    (328.73,120.2) ;
\draw [shift={(328.73,120.2)}, rotate = 0] [color={rgb, 255:red, 74; green, 144; blue, 226 }  ,draw opacity=1 ][fill={rgb, 255:red, 74; green, 144; blue, 226 }  ,fill opacity=1 ][line width=1.5]      (0, 0) circle [x radius= 1.74, y radius= 1.74]   ;
%Straight Lines [id:da6947904447790099] 
\draw [color={rgb, 255:red, 74; green, 144; blue, 226 }  ,draw opacity=1 ][fill={rgb, 255:red, 231; green, 231; blue, 231 }  ,fill opacity=1 ][line width=1.5]    (344.07,120.2) ;
\draw [shift={(344.07,120.2)}, rotate = 0] [color={rgb, 255:red, 74; green, 144; blue, 226 }  ,draw opacity=1 ][fill={rgb, 255:red, 74; green, 144; blue, 226 }  ,fill opacity=1 ][line width=1.5]      (0, 0) circle [x radius= 1.74, y radius= 1.74]   ;
%Straight Lines [id:da1036902959684678] 
\draw [color={rgb, 255:red, 74; green, 144; blue, 226 }  ,draw opacity=1 ][fill={rgb, 255:red, 231; green, 231; blue, 231 }  ,fill opacity=1 ][line width=1.5]    (358.73,120.2) ;
\draw [shift={(358.73,120.2)}, rotate = 0] [color={rgb, 255:red, 74; green, 144; blue, 226 }  ,draw opacity=1 ][fill={rgb, 255:red, 74; green, 144; blue, 226 }  ,fill opacity=1 ][line width=1.5]      (0, 0) circle [x radius= 1.74, y radius= 1.74]   ;
%Straight Lines [id:da9974067585884957] 
\draw [color={rgb, 255:red, 74; green, 144; blue, 226 }  ,draw opacity=1 ][fill={rgb, 255:red, 231; green, 231; blue, 231 }  ,fill opacity=1 ][line width=1.5]    (374.07,120.2) ;
\draw [shift={(374.07,120.2)}, rotate = 0] [color={rgb, 255:red, 74; green, 144; blue, 226 }  ,draw opacity=1 ][fill={rgb, 255:red, 74; green, 144; blue, 226 }  ,fill opacity=1 ][line width=1.5]      (0, 0) circle [x radius= 1.74, y radius= 1.74]   ;
%Shape: Rectangle [id:dp8378007196214312] 
\draw  [draw opacity=0][fill={rgb, 255:red, 255; green, 255; blue, 255 }  ,fill opacity=1 ] (165.06,320.29) -- (477.74,320.29) -- (477.74,340.11) -- (165.06,340.11) -- cycle ;
%Shape: Rectangle [id:dp5855622430887087] 
\draw  [draw opacity=0][fill={rgb, 255:red, 255; green, 255; blue, 255 }  ,fill opacity=1 ] (461.01,23.45) -- (481.79,23.45) -- (481.79,330.2) -- (461.01,330.2) -- cycle ;

% Text Node
\draw (388.2,218.6) node [anchor=north west][inner sep=0.75pt]  [color={rgb, 255:red, 208; green, 2; blue, 27 }  ,opacity=1 ]  {$C$};
% Text Node
\draw (360.05,99.2) node [anchor=north west][inner sep=0.75pt]  [color={rgb, 255:red, 74; green, 144; blue, 226 }  ,opacity=1 ]  {$F$};

\end{tikzpicture}

%% file: references.bib
@article{benjamini2012separation,
  title={On the separation profile of infinite graphs},
  author={Benjamini, Itai and Schramm, Oded and Tim{\'a}r, {\'A}d{\'a}m},
  journal={Groups, Geometry, and Dynamics},
  volume={6},
  number={4},
  pages={639--658},
  year={2012}
}

@article{hume2026separation,
  title={Separation profiles of free products},
  author={Hume, David},
  journal={arXiv preprint arXiv:2604.24462},
  year={2026}
}

@article{grigoriev2011tree,
  title={Tree-width and large grid minors in planar graphs},
  author={Grigoriev, Alexander},
  journal={Discrete Mathematics \& Theoretical Computer Science},
  volume={13},
  number={Graph and Algorithms},
  year={2011},
  publisher={Episciences. org}
}

@InProceedings{kisfaludi2023separator,
  author =	{Kisfaludi-Bak, S\'{a}ndor and Masa\v{r}{\'\i}kov\'{a}, Jana and van Leeuwen, Erik Jan and Walczak, Bartosz and W\k{e}grzycki, Karol},
  title =	{{Separator theorem and algorithms for planar hyperbolic graphs}},
  booktitle =	{40th International Symposium on Computational Geometry (SoCG 2024)},
  pages =	{67:1--67:17},
  series =	{Leibniz International Proceedings in Informatics (LIPIcs)},
  ISBN =	{978-3-95977-316-4},
  ISSN =	{1868-8969},
  year =	{2024},
  volume =	{293},
  editor =	{Mulzer, Wolfgang and Phillips, Jeff M.},
  publisher =	{Schloss Dagstuhl -- Leibniz-Zentrum f{\"u}r Informatik},
  address =	{Dagstuhl, Germany},
  URL =		{https://drops.dagstuhl.de/entities/document/10.4230/LIPIcs.SoCG.2024.67},
  URN =		{urn:nbn:de:0030-drops-200126},
  doi =		{10.4230/LIPIcs.SoCG.2024.67}
}

@article{berger2024bounded,
  title={Bounded-diameter tree-decompositions},
  author={Berger, Eli and Seymour, Paul},
  journal={Combinatorica},
  volume={44},
  number={3},
  pages={659--674},
  year={2024},
  publisher={Springer}
}

@article{dvovrak2019treewidth,
  title={Treewidth of graphs with balanced separations},
  author={Dvo{\v{r}}{\'a}k, Zden{\v{e}}k and Norin, Sergey},
  journal={Journal of Combinatorial Theory, Series B},
  volume={137},
  pages={137--144},
  year={2019},
  publisher={Elsevier}
}

@incollection{gromov1987hyperbolic,
  title={Hyperbolic groups},
  author={Gromov, Mikhael},
  booktitle={Essays in group theory},
  pages={75--263},
  year={1987},
  publisher={Springer}
}

@article{chuzhoy2021towards,
  title={Towards tight(er) bounds for the excluded grid theorem},
  author={Chuzhoy, Julia and Tan, Zihan},
  journal={Journal of Combinatorial Theory, Series B},
  volume={146},
  pages={219--265},
  year={2021},
  publisher={Elsevier}
}

@article{lipton1979separator,
  title={A separator theorem for planar graphs},
  author={Lipton, Richard J and Tarjan, Robert Endre},
  journal={SIAM Journal on Applied Mathematics},
  volume={36},
  number={2},
  pages={177--189},
  year={1979},
  publisher={SIAM}
}

@book{ghys2013groupes,
  title={Sur les groupes hyperboliques d’apr{\`e}s {Mikhael Gromov}},
  author={Ghys, Etienne and de la Harpe, Pierre},
  volume={83},
  year={2013},
  publisher={Springer Science \& Business Media}
}

@article{seymour1994call,
  title={Call routing and the ratcatcher},
  author={Seymour, Paul D. and Thomas, Robin},
  journal={Combinatorica},
  volume={14},
  number={2},
  pages={217--241},
  year={1994},
  publisher={Springer}
}

@inproceedings{chepoi2008diameters,
  title={Diameters, centers, and approximating trees of delta-hyperbolic geodesic spaces and graphs},
  author={Chepoi, Victor and Dragan, Feodor F. and Estellon, Bertrand and Habib, Michel and Vax{\`e}s, Yann},
  booktitle={Proceedings of the twenty-fourth annual symposium on Computational geometry},
  pages={59--68},
  year={2008}
}

@article{dieng2009tree,
  title={On the tree-width of planar graphs},
  author={Dieng, Youssou and Gavoille, Cyril},
  journal={Electronic Notes in Discrete Mathematics},
  volume={34},
  pages={593--596},
  year={2009},
  publisher={Elsevier}
}

@article{ChepoiDraganEstellonHabibVaxesXiang2012,
  author  = {Chepoi, Victor and Dragan, Feodor F. and Estellon, Bertrand
             and Habib, Michel and Vax{\`e}s, Yann and Xiang, Yang},
  title   = {Additive Spanners and Distance and Routing Labeling Schemes
             for Hyperbolic Graphs},
  journal = {Algorithmica},
  volume  = {62},
  number  = {3--4},
  pages   = {713--732},
  year    = {2012},
  doi     = {10.1007/s00453-010-9478-x}
}

@article{CoudertDucoffeNisse2016,
  author  = {Coudert, David and Ducoffe, Guillaume and Nisse, Nicolas},
  title   = {To Approximate Treewidth, Use Treelength!},
  journal = {SIAM Journal on Discrete Mathematics},
  volume  = {30},
  number  = {3},
  pages   = {1424--1436},
  year    = {2016},
  doi     = {10.1137/15M1034039}
}

@book{BridsonHaefliger1999,
  author    = {Bridson, Martin R. and Haefliger, Andr{\'e}},
  title     = {Metric Spaces of Non-Positive Curvature},
  series    = {Grundlehren der mathematischen Wissenschaften},
  volume    = {319},
  publisher = {Springer},
  year      = {1999}
}

@article{BielakPanczyk2012,
  author  = {Bielak, Halina and Pa{\'n}czyk, Micha{\l}},
  title   = {A self-stabilizing algorithm for finding weighted centroid in trees},
  journal = {Annales UMCS Informatica},
  volume  = {12},
  number  = {2},
  pages   = {27--37},
  year    = {2012},
  doi     = {10.2478/v10065-012-0035-x}
}

@article{houdrouge2025separation,
  title={Separation number and treewidth, revisited},
  author={Houdrouge, Hussein and Miraftab, Babak and Morin, Pat},
  journal={arXiv preprint arXiv:2503.17112},
  year={2025}
}

@article{hume2022poincare,
  title={Poincar{\'e} profiles of Lie groups and a coarse geometric dichotomy},
  author={Hume, David and Mackay, John M and Tessera, Romain},
  journal={Geometric and Functional Analysis},
  volume={32},
  number={5},
  pages={1063--1133},
  year={2022},
  publisher={Birkhaeuser Science}
}

@article{hume2020poincare,
  title={Poincar{\'e} profiles of groups and spaces},
  author={Hume, David and Mackay, John M and Tessera, Romain},
  journal={Revista Matem{\'a}tica Iberoamericana},
  volume={36},
  number={6},
  pages={1835--1886},
  year={2020}
}

@inproceedings{gournay2023separation,
  title={Separation profile, isoperimetry, growth and compression},
  author={Gournay, Antoine and Le Coz, Corentin},
  booktitle={Annales de l'Institut Fourier},
  volume={73},
  number={4},
  pages={1627--1675},
  year={2023}
}

@article{berger2005glauber,
  title={Glauber dynamics on trees and hyperbolic graphs},
  author={Berger, Noam and Kenyon, Claire and Mossel, Elchanan and Peres, Yuval},
  journal={Probability Theory and Related Fields},
  volume={131},
  number={3},
  pages={311--340},
  year={2005},
  publisher={Springer}
}
